\newdefinition{definition}{Definition}
\newdefinition{example}{Example}
\newdefinition{notation}{Notation}
\newdefinition{remark}{Remark}
\newtheorem{theorem}{Theorem}
\newtheorem{proposition}[theorem]{Proposition}
\newtheorem{lemma}[theorem]{Lemma}
\newtheorem{corollary}[theorem]{Corollary}
\newcommand*\bigcdot{\mathpalette\bigcdot@{.6}}
\newcommand*\bigcdot@[2]{\mathbin{\vcenter{\hbox{\scalebox{#2}{$\m@th#1\bullet$}}}}}
\newcommand{\NN}{\mathbb{N}}
\newcommand{\K}{\mathbb{K}}
\newcommand{\G}{G}
\newcommand{\Tens}[1]{\mathcal{T}\langle #1\rangle}
\newcommand{\Sym}[1]{\mathcal{S}\langle #1\rangle}
\newcommand{\WGamma}[2]{\mathcal{#1}(#2)}
\newcommand{\Id}{\text{Id}}
\newcommand{\clk}{\usym{1F552}}
\begin{document}
\allowdisplaybreaks
\begin{frontmatter}
\author[1]{Loïc Foissy}
\ead{foissy@univ-littoral.fr}

\author[1]{Pierre-Louis Giscard\corref{cor1}}
\ead{giscard@univ-littoral.fr}
\affiliation[1]{organization={Université du Littoral Côte d'Opale, UR 2597, LMPA, Laboratoire de Mathématiques Pures et Appliquées Joseph Liouville},
addressline={50 rue F. Buisson},
postcode={F-62100},
city={Calais},
country={France}}

\author[2]{Cécile Mammez}
\ead{cecile.mammez@ac-versailles.fr}
\affiliation[2]{organization={Laboratoire de Mathématiques de Reims - UMR 9008, U.F.R. Sciences Exactes et Naturelles, Université de Reims},
addressline={Moulin de la Housse - BP 1039},
city={Reims},
postcode={51687 cedex 2},
country={France}}

\cortext[cor1]{Corresponding author.}
	

		\title{A co-preLie structure from chronological loop erasure in graph walks}
	\begin{keyword}Graphs, walks, cycles, coproduct, co-preLie co-algebra, Hopf algebra\end{keyword}
	
	
	\begin{abstract} 
 We show that the chronological removal of cycles from a walk on a graph, known as Lawler's loop-erasing procedure, generates a preLie co-algebra on the vector space spanned by the walks. In addition, we prove that the tensor and symmetric algebras of graph walks are graded Hopf algebras, provide their antipodes explicitly and recover the preLie co-algebra from a brace coalgebra on the tensor algebra of graph walks. Finally we exhibit sub-Hopf algebras associated to particular types of walks. 
	\end{abstract}
 	\end{frontmatter}

\section*{Introduction} \label{section:introduction}  

Graphs and walks are ubiquitous objects in combinatorics, discrete mathematics and beyond: they appear throughout linear algebra, differential calculus and have found widespread applications in physics, engineering and biology. Yet, while graph theory is being developed, less attention has been devoted to the walks themselves, a walk being a contiguous succession of directed edges on a graph. In particular the algebraic structures associated to walks have not, to the best of our knowledge, been fully explored. We may here refer the reader to quivers and path algebras and hike monoids \cite{Giscard2017}. The goal of the present work is to exhibit a co-preLie structure naturally associated to walks on graphs (simple graphs, multi-graphs, digraphs and hypergraphs). The structure arises from a simple procedure, now known as \textit{Lawler's loop erasing} \cite{Lawler1999}, first conceived in the context of percolation theory to randomly generate simple paths--walks where all vertices are distinct--from a sample of random walks. The procedure consists of a chronological removal of cycles (called loops in Lawler's original work) as one walks along on the graph: consider for instance the complete graph $K_4$ on 4 vertices and label these vertices with integers 1 through 4. Walking along the path $1\to 2\to 1 \to 3\to 4 \to 3\to 1\to 3$ on the graph and removing cycles whenever they appear, we are left with the simple path $1\to 3$  after having successively `erased' the cycles $1\to 2\to 1$, then  $3\to 4 \to 3$ and finally  $1\to 3 \to 1$. Note how $1\to 3\to 1$ does not appear contiguously in the original walk. 
Once terminated, Lawler's loop-erasing has eliminated a set of cycles, all of whose internal vertices are distinct, leaving a possibly trivial walk-skeleton behind. If the initial walk was itself a cycle, this skeleton is the empty walk on the initial vertex (also called length-0 walk) and otherwise it is a self-avoiding path. Remark that because the loop-removal occurs in a chronological fashion, Lawler's process is strongly non-Markovian:  complete knowledge of all the past steps of a walk is required to decide the current and future erased sections at any point of the walk. 

We show below that this intuitive process is naturally associated with a co-preLie coproduct. In addition, slightly relaxing the chronological constraints by allowing simultaneous erasures under some compatibility conditions leads to Hopf algebra structures on the tensor and symmetric algebras of graph walks.\\[-.5em]

The article is organized as follows. In Section~\S\ref{DefinitionsSection} we begin with basic notations and definitions concerning walks, graphs and Lawler's loop erasing procedure. In \S\ref{Chrono} we describe the chronological structure that walks acquire from Lawler's process and use this structure to define the admissible cuts of a walk. We show in particular that this notion is well defined in the sense that in spite of the
strong chronological constraints created by Lawler’s process, cutting out admissible cuts does not alter
the other cuts admissibility. This leads in \S\ref{sectionCoproduct} to the definition of a co-product on walks which we show to be co-preLie. Then, in \S\ref{HopfTensor}, considering a wider set of simultaneously admissible cuts, called extended admissible cuts, we construct a co-associative co-product on the tensor and symmetric algebras generated by the vector space of walks on a graph. We then prove an explicit formula for the antipode maps in the so-obtained Hopf algebras. In \S\ref{BraceSection} we construct a brace coalgebra and a codendriform bialgebra on the tensor algebra generated by graph walks and use these to recover the preLie structure as a corollary of the Hopf algebra of the preceding section.
Finally in \S\ref{CactiTowerCorollas} we exhibit  Hopf subalgebras associated to certain types of walks, the cacti, towers and corollas.\\[-.5em]

In a subsequent work inspired by previous combinatorial results \cite{Giscard2012}, we will show that Lawler's process is also naturally associated with a non-associative permutative product, known as nesting \cite{Giscard2012}, which satisfies the Livernet compatibility condition \cite{Livernet2006} with the co-preLie co-product defined here. This will provide the very first concrete example of the NAP - co-preLie operad in a `living' context. This construction appears to be of paramount importance given the pervasive use of graph-walks in mathematics and mathematical-physics. In particular, we will show that this leads to a useful bridge between formal sums over infinite families of walks and branched continued fractions.

	\section{Notations and definitions}\label{DefinitionsSection}
 \subsection{Notations for graphs and rooted walks}
 While we begin by recalling standard definitions for graphs, we introduce somewhat less common concepts for walks, of which we advise the reader to take special notice.\\[-.5em]

 A \emph{graph} $G=(V,E)$ is a countable set of vertices $V$ and a countable set $E$ of distinct paired vertices, called edges, denoted $\{i,j\}$, $i,j\in V$. 
  A \emph{digraph} $G=(V,E)$ is a finite set of vertices $V$ and a finite set $E\subseteq V^2$ of  {\em directed edges} (or {\em arcs}), denoted $(i,j)$ for the arc from $i$ to $j$. 
 A \emph{directed multigraph} (or \emph{multidigraph}) is defined the same way as a digraph, except that $E$ is a multiset. An edge of~$E$ is then denoted $(i,j)_k$, the integer $k$ specifying which edge from $i$ to $j$ we consider. In the present work we always assume that $G$ is non-empty.\\[-.5em] 

A \textit{rooted walk}, or rooted path, of length $\ell$ from vertex $i$ to vertex $j$ on a multi directed graph $G$ is a contiguous sequence of $\ell$ arcs starting from $i$ and ending in $j$, e.g. $\omega = (i, i_1)_{k_1}(i_1, i_2)_{k_2} \cdots (i_{\ell-1}, j)_{k_\ell}$ (a sequence of arcs is said to be contiguous if each arc but the first one starts where the previous ended). 
The rooted walk $\omega$ is \textit{open} if $i \neq j$ and \textit{closed} otherwise, in which case it is also called {\em rooted cycle}. Since we only consider rooted walks in this work, we shorten this terminology to \textit{walks}. On digraphs we may unambiguously represent walks simply as ordered sequences of vertices $\omega=w_0w_1\cdots w_{\ell-1}w_\ell$. The walk $\omega=w_0$ of length 0 is called the \emph{trivial} walk on vertex $w_0$, it is both open and closed. The set of all walks on a graph $G$ is denoted $\mathcal{W}(G)$. \\[-.5em] 

Consider a walk $\omega=w_0\dots w_\ell$. A \textit{subwalk} of a walk $\omega=w_0\cdots w_\ell$ is any walk $w_k\cdots w_{k'}$ where $0\leq k\leq k'\leq \ell$. If $k\neq k'$ and $w_k=w_{k'}$, we designate by $\omega^{k,k'}:= w_k w_{k+1}\dots w_{k'}$ the \textit{closed subwalk} of $\omega$ with root $w_k$. In a complementary way, we define the \emph{remainder} section $\omega_{k,k'} := w_0\dots w_k w_{k' +1}\dots w_\ell$ to be what remains of $\omega$ after removal of the section $\omega^{k,k'}$. Note, for convenience we denote $\omega_{k,k'}^{l,l'}$ for $(\omega_{k,k'})^{l,l'}$, the section $w_l\dots w_{l'}$ erased from the remainder $\omega_{k,k'}=w_0\dots w_{k}w_{k'+1}\dots w_\ell$. This means in particular that in $\omega_{k,k'}^{l,l'}$, integers $k,k',l$ and $l'$ all refer to indices from $\omega$.\\[-.5em]

A rooted walk in which all vertices are distinct is said to be a {\em simple path} or {\em self-avoiding walk}. The set of all such walks on a digraph $G$ is denoted $\text{SAW}(G)$.
Similarly, a rooted cycle $(i_0, i_1)_{k_1}(i_1, i_2)_{k_2} \cdots (i_{\ell-1}, i_0)_{k_\ell}$ of non-zero length for which all vertices $i_t$ are distinct is said to be a {\em simple cycle} or {\em self-avoiding polygon}. Note that a self-loop $(i, i)_k$ is considered a rooted simple cycle of length one. The set of all simple cycles on $G$ is $\text{SAP}(G)$. 
For $G$ any (directed multi)graph, to ease the notation, we also denote by $\mathcal{W}(G)$ the $\K$-vector space spanned by all walks on $G$, $\K$ being a field of characteristic $0$. For a walk $\omega \in {\mathcal W}(G)$, we designate by $V(\omega)$ the support of $\omega$, that is the set of \textit{distinct} vertices visited by $\omega$; and by $E(\omega)$ the \textit{multiset} of directed edges visited by $\omega$.\\[-.5em]

  \subsection{Definitions for loop-erasure}
	As stated in the introduction, Lawler's loop-erasing procedure consists in erasing all cycles from a walk $\omega$ in the \emph{chronological} order in which they appear. Formally, it is a selection-quotient process which transforms a walk into its self-avoiding skeleton. To construct the algebraic structures associated with Lawler's procedure we must not only consider its end product but also what it produces during its intermediary stages and what it removes from the walk, in its original context:

\begin{definition}[Loop-erased sections]\label{DefLES}
Let $G$ be a digraph and consider $\omega = w_0\dots w_\ell\in \mathcal{W}(G)$.  The set $\text{LES}(\omega)$ of loop-erased sections is the set of all \textit{closed subwalks} of $\omega$ \textit{erased} by Lawler's procedure. 
\end{definition}

	\begin{example}\label{Example1}
		On the complete graph $K_5$ on 5 vertices (including self-loops), consider the walk
		$\omega=12324522,$
		$$\omega=\WalkSigma$$
  In this illustration, the integers in boxes in the middle of edges give these edges' order of traversal while vertices are labeled by black integers next to them. 
The simple cycles erased by Lawler's procedure are $\omega^{1,3}=232$, $\omega^{3,6}=2452$ and $\omega^{6,7}=22$ and the set of erased closed subwalks of $\omega$ is therefore,
\begin{align*}
				\text{LES}(12324522)=&\{\omega^{1,3},\omega^{3,6}, \omega^{1,6}, \omega^{6,7}, \omega^{3,7}, \omega^{1,7} \}\\
				=&\{232, 2452, 232452, 22, 24522, 2324522\}.
			\end{align*}
 \end{example}

  
 
  \begin{remark}
      The requirement that the closed subwalks of $\text{LES}(\omega)$ be constructed solely from \emph{erased} sections is crucial. For example, in $$\omega=1232341=\WalkOmegaSPEC$$ we have $\text{LES}(\omega)=\{\omega^{1,3},\omega\}=\{232,1232341\}$. In particular $323\notin\text{LES}(\omega)$ because $323$ was not erased at once by Lawler's procedure. Including it would violate the chronological condition innate in Lawler's process, as erasing $323$ from $\omega$ would imply having overlooked the cycle $232$ which was closed \emph{prior} to $323$. We formalize this observation with the notion of loop-erased walks: 
  \end{remark}

\begin{definition}[Loop-erased walks]
Let $G$ be a digraph and $\omega=w_0\cdots w_\ell\in\mathcal{W}(G)$ of length $\ell$. For $0\leq k\leq \ell$, we designate  $\text{LEW}_k(\omega)$, called loop-erased walk $\omega$ at step $k$, to be what is left of $\omega$ after its first $k$ steps while performing Lawler's procedure. 
\end{definition}

	By the definitions of $\text{LES}(\omega)$ and $\text{LEW}(\omega)$ we obtain what was remarked above, namely that loop-erased sections may not straddle over one-another, a consequence of their step-by-step erasure in chronological order: 

	\begin{lemma}\label{PropLEWLES}
		Let $G$ be a digraph and $\omega = w_0\dots w_\ell\in\mathcal{W}(G)$. Then $\omega^{k,k'}\in \emph{\text{LES}}(\omega)$ if and only if there does not exist a pair of integers $0\leq l <k<l' <k'\leq \ell$ with $w_k=w_{k'}\neq w_{l}=w_{l'}$ and $\omega^{l,l'}\in \emph{\text{LES}}(\omega) $
	\end{lemma}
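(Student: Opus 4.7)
The plan is to encode Lawler's procedure positionally. For each step $i$, let $P_i \subseteq \{0, 1, \ldots, i\}$ denote the set of positions of $\omega$ whose vertex is still visible in $\text{LEW}_i$. This $P_i$ evolves by push-or-pop — adjoin $i$ if $w_i \notin \text{LEW}_{i-1}$, else intersect $P_{i-1}$ with $\{p \leq p_i\}$ for the unique matching position $p_i \in P_{i-1}$ with $w_{p_i} = w_i$ — so positions disappear from $P$ monotonically: once erased, never restored. From this I derive the working characterization that $\omega^{k,k'} \in \text{LES}(\omega)$ iff $w_k = w_{k'}$ and $P_{k'} \cap \{k+1, \ldots, k'\} = \emptyset$; in that case the match $p$ at step $k'$ necessarily satisfies $p \leq k$ with $w_p = w_k$.

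For the forward direction, I assume $\omega^{k,k'} \in \text{LES}$ and, for contradiction, a straddling $\omega^{l,l'} \in \text{LES}$ with $l < k < l' < k'$ and $w_l = w_{l'} \neq w_k$. The characterization forces the match $m' \leq l$ at step $l'$ (so $P_{l'} \subseteq \{0, \ldots, l\}$) and the match $p \leq k$ with $w_p = w_k$ at step $k'$. By monotonicity $p \in P_{l'}$, hence $p \leq m'$, and the vertex inequality $w_p \neq w_{m'}$ sharpens this to $p < m'$. At step $k$, the visible $w_k$-position $p$ triggers a match collapsing $P_k \subseteq \{0, \ldots, p\}$ and erasing $m'$, contradicting $m' \in P_{l'-1}$ (again by monotonicity, since $l' - 1 \geq k$).

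For the backward direction (contrapositive), I suppose $w_k = w_{k'}$ but $\omega^{k,k'} \notin \text{LES}$, so $p^* := \max P_{k'} > k$ and $w_{p^*} = w_k$. Since $p^* \in (k, k']$ enters $P$ without matching, no $w_k$-position is visible in $P_{p^*-1}$; hence the last visible $w_k$-position $q \leq k$ must have been erased at some step $j \in (k, p^*) \subset (k, k')$. The match $p_j < q$ has vertex $w_j = w_{p_j} \neq w_k$ (else the match would coincide with $q$), so the immediate erasure $\omega^{k_j, j}$ is an LES based on a vertex distinct from $w_k$, with $k < j < k'$. If $k_j < k$, this yields the straddling directly. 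The main obstacle is the sub-case $k_j \geq k$, forced to $k_j \in (k, j)$ since $w_j \neq w_k$: here I enumerate the $w_j$-occurrences $p_j = r_t < r_{t+1} < \cdots < r_s = k_j$ in $(p_j, j]$ and observe that each $\omega^{r_u, r_{u+1}}$ is an immediate erasure in $\text{LES}$ (the visible $w_j$-vertex remains at $r_t$ throughout). Since $r_t < k \leq r_s$ and $w_{r_{u+1}} \neq w_k$ prevents $r_{u+1} = k$, a unique $u$ gives $r_u < k < r_{u+1} \leq k_j < j < k'$, producing the required straddling LES $\omega^{r_u, r_{u+1}}$.
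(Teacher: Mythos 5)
Your proposal is correct, and it is actually more complete than the paper's own argument. The paper's proof establishes only the ``only if'' direction --- that two loop-erased sections cannot straddle --- via a short visibility argument: once the earlier-closing section is erased, the root vertex of the straddling section is no longer present in the loop-erased walk, so the straddling section cannot itself be erased. Your forward direction is the same idea, but executed through the positional encoding $P_i$ (the set of indices still visible in $\mathrm{LEW}_i$), whose push-or-pop monotonicity makes the contradiction at step $k$ airtight. Where you genuinely go beyond the paper is the converse: the paper's proof never addresses the implication ``no straddling loop-erased section $\Rightarrow \omega^{k,k'}\in\mathrm{LES}(\omega)$,'' whereas you prove it, and your case analysis ($k_j<k$ versus $k_j\in(k,j)$, with the chain of consecutive $w_j$-occurrences crossing $k$) is exactly what is needed to manufacture the straddling witness. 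The one point you should flesh out is the ``working characterization'' $\omega^{k,k'}\in\mathrm{LES}(\omega)\iff w_k=w_{k'}$ and $P_{k'}\cap\{k+1,\dots,k'\}=\emptyset$, which you assert rather than prove; its ``$\Leftarrow$'' half requires a genuine (if routine) argument --- track the visible occurrence of $w_k$: it starts at some position $q\le k$, can only be erased or replaced by a strictly later occurrence, so once it exceeds $k$ it can never return below $k$, forcing either $k'\in P_{k'}$ or a match position greater than $k$, both of which contradict the emptiness of $P_{k'}\cap\{k+1,\dots,k'\}$. With that lemma written out, the rest of your argument is sound, and the positional characterization is a reusable tool the paper does not provide.
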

Before we prove the lemma, we remark that the notion of loop-erased walks allows for an alternative but equivalent definition of that of loop-erased section: 
\begin{remark}[A recursive procedure for constructing LES($\omega$)]\label{AltLES}
		Let $G$ be a digraph and consider $\omega = w_0\dots w_\ell\in \mathcal{W}(G)$.  The set $\text{LES}(\omega)$ of loop-erased sections of $\omega=w_0\cdots w_\ell$ is constructed recursively as follows. Initialize with $\text{LES}(\omega)=\emptyset$. Then for $k\in\{1,\dots, \ell-1\}$, if $w_{k+1}\in V(\text{LEW}_k(\omega))$ denote $k'$, the greatest integer such that $0\leq k'\leq k$ and $w_{k'}=w_{k+1}$. If $k'$ exists, then:  
		\begin{enumerate}
			\item add the closed walk $\omega^{k',k+1}=w_{k'}\dots w_{k+1}$ to $\text{LES}(\omega)$;
			\item if there exists $\omega^{k'',k'}\in\text{LES}(\omega)$, add the closed walk $\omega^{k'',k+)}=w_{k''}\dots w_{k+1}$ to $\text{LES}(\omega)$ as well.
		\end{enumerate}
  While equivalent to Definition~\ref{DefLES}, the above formulation is more formal in flavor and recursive in nature, thus better suited to algorithm designs and easier to wield in proofs.
\end{remark}


\begin{proof}[Proof of Lemma~\ref{PropLEWLES}]
Assuming that $\omega^{k,k'},\omega^{l,l'}\in\textrm{LES}(\omega)$, suppose that both sections nonetheless straddle over one-another. We may choose wlog that $k<l<k'<l'$. In particular, there is no earlier step $m<k$ with $w_m=w_l$ since otherwise we would effectively be in the straddling situation where $l<k$.
Then at step $l'-1\geq k'$ of the walk, vertex $w_{l}\notin V(\text{LEW}_{l'-1})(w)$ since at this point $\omega^{k,k'}$ has already been erased and so by Remark~\ref{AltLES}, $\omega^{l,l'}\notin \textrm{LES}(\omega)$, a contradiction. 
\end{proof}

\section{The chronological structure of walks}\label{Chrono}

    From a walk $\omega$, Lawler's process, once terminated, produces a set of erased simple cycles and one self-avoiding skeleton (possibly trivial). It is therefore natural to seek a co-product which to the walk $\omega$ would associate a sum over erased sections $\omega^{k,k'}$ and associated remainders $\omega_{k,k'}$, so that $\omega$ could be obtained back from these through grafting of the former onto the latter. 
    The `grafting' product appropriate to that end, known as \textit{nesting}, was first identified thanks to purely combinatorial considerations \cite{Giscard2012} and is permutative non-associative reflecting Lawler's process' chronological constraints. It is difficult to maintain any form of compatibility with nesting via such an indiscriminate procedure as cutting out all loop-erased sections however, as 
    not all pairs $(\omega^{k,k'},\,\omega_{k,k'})$ can be consistently grafted back to form the original walk; and when grafting is possible, it may be so in more than one way. These problems arise from certain towers and all corollas, respectively. Consider first an instance of the former, 
   \[\omega=1233231=\WalkNonAssoc,\]
   which is a tower in the sense that the self-loop $33$ is attached `on top of' cycle $232$, itself attached to the `base' triangle $1231$. Here $\omega^{2,3}=33$ is a valid loop-erased section of $\omega$, yet can be grafted back onto $\omega_{2,3}$ in two distinct ways: one producing $\omega$ and the other yielding the walk $\omega'=1232331$. Remark how in $\omega'$, the self-loop $33$ occurs one level below its original location in $\omega$ since it is now attached directly to the `base' triangle $1231$. Algebraically such instances correspond to cases where the nesting product fails to be associative.
    Second, for the issue with corollas, i.e. bouquets of closed walks with the same root, consider e.g.   
     \[\omega=12131=\WalkCorollaTwoLoopsThreeVertices{{1,2,3}}{{1/2,3/4}}{}.\]
    Here both $121,\,131\in\text{LES}(\omega)$; yet cutting e.g. $\omega^{0,2}=121$ and grafting it back onto $\omega_{0,2}=131$ either gives back the walk $\omega=12131$ or the completely different one $\omega'=13121$.
    Algebraically, these instances translate into cases where the nesting product fails to be commutative.

  
    	\subsection{Admissible cuts}
	To resolve the difficulties mentioned above, which become extensive when taken together in arbitrary long walks, we must refine the set of loop-erased sections that can be cut out of the original walk by the co-product. Here, as earlier, the major hurdle is due to the chronological constraints inherent to Lawler's process. Because of this, special attention must be paid to erased sections that appear within longer erased sections, the latter providing the temporal context of the former: 
 
 	\begin{definition}[Temporal context of an erased section]
		Let $G$ be a digraph, $\omega\in\mathcal{W}(G)$ and $\omega^{k,k'}\in\text{LES}(\omega)$. We denote $\text{LES}(\omega)^{<}_{k,k'}\subset \text{LES}(\omega)$ the subset of loop-erased sections $\omega^{l,l'}$ which \textit{strictly} include $\omega^{k,k'}$ as subwalk, i.e. $l\leq k<k'<l'$.
		Because we require $k'<l'$ strictly, $\text{LES}(\omega)^{<}_{k,k'}$ may be empty. Otherwise, we denote $\omega^{\text{min}}_{k,k'}$ the smallest element of $\text{LES}(\omega)^{<}_{k,k'}$ for inclusion.
	\end{definition}
 
By construction, if $\omega^{\text{min}}_{k,k'}$ exists, it is the tightest erased section which  comprises $\omega^{k,k'}$ entirely. It provides the relevant temporal context for $\omega^{k,k'}$ since anything outside of $\omega^{\text{min}}_{k,k'}$ creates no further chronological constraints on $\omega^{k,k'}$ beyond those on  $\omega^{\text{min}}_{k,k'}$. This is because vertices appearing in the loop-erased walk at the start of $\omega^{\text{min}}_{k,k'}$ cannot appear again inside of it by Lemma~\ref{PropLEWLES}, so are necessarily avoided by $\omega^{k,k'}$. Hence, any additional constraint that Lawler's process imposes on $\omega^{k,k'}$ as compared to $\omega^{\text{min}}_{k,k'}$ arise solely from within $\omega^{\text{min}}_{k,k'}$.
 
	\begin{example}
Let $\omega=12324522$ be the walk of Example~\ref{Example1} and consider its loop-erased section $\omega^{1,3}$. Since \(\text{LES}(12324522)=\{\omega^{1,3},\,\omega^{3,6},\, \omega^{1,6},\, \omega^{6,7},\, \omega^{3,7},\, \omega^{1,7} \} \), then $\text{LES}(\sigma)^{<}_{1,3}=\{\sigma^{1,6},\, \sigma^{1,7}\}$. Indeed, both section $\omega^{1,6}$ and $\omega^{1,7}$ strictly contain $\omega^{2,4}$. Furthermore, the smallest of these by inclusion is $\omega^{\text{min}}_{1,3}=\omega^{1,6}$, i.e. $\omega^{1,6}$ is the shortest loop-erased section strictly containing $\omega^{1,3}$.
At the opposite, there is no loop-erased section strictly containing $\omega^{3,7}\in\text{LES}(\omega)$, that is $\text{LES}(\omega)^{<}_{4,8}=\emptyset$ and $\omega^{\text{min}}_{4,8}$ does not exist.
	\end{example}


 We can now control the loop-erased sections that a co-product may extract by admitting only those cuts which are corollas within their relevant temporal context and only if those cuts are contiguous subwalks including the last petals of the corolla:  
 
	\begin{definition}[Admissible cuts]\label{def:admiscut} Let $G$ be a digraph and $\omega=w_1\dots w_\ell\in\mathcal{W}(G)$. A non-empty loop-erased section $\omega^{k,k'}:=w_kw_{k+1}\dots w_{k\rq}\in\text{LES}(\omega)$ is an {\it admissible cut} of $\omega$ when $\omega^{k,k'}\neq \omega$ and either  $\omega^{l,l'}:=\omega^{\text{min}}_{k,k'}$ does not exist or $w_k$ does not appear in $w_{k'+1}\cdots w_{l'}$.
		The set of admissible cuts of $\omega$ is denoted $\text{AdC}(\omega)$.
	\end{definition}

\begin{remark}
The condition that for $\omega^{k,k'}\in\text{LES}(\omega)$, $\omega^{l,l'}:=\omega^{\text{min}}_{k,k'}$ either does not exist or $w_l$ does not appear in $w_{k'+1}\cdots w_{l'}$ implies that admissible cuts can only be made right to left in the walk, that is from the latest to the earliest, in \emph{reverse chronological order}. 
\end{remark}

\begin{example}
		In the complete graph $K_5$, consider the walk   
  \vspace{-2mm}
  \[\omega=12324345=\WalkGirl\] 
		The loop-erased sections $\omega^{1,3}=232\in\text{LES}(\omega)$ and $\omega^{4,6}=434\in\text{LES}(\omega)$ are both admissible cuts of $\omega$. At the opposite, $\omega^{2,5}=3243\notin \text{LES}(\omega)$
  and so is not an admissible cut.
  \end{example}
	
  \begin{example}
  In the walk \[\omega=12131=\WalkCorollaTwoLoopsThreeVertices{{1,2,3}}{{1/2,3/4}}{}\]
 subwalk $\omega^{2,4}\in\text{LES}(\omega)$ is an admissible cut of $\omega$, while $\omega^{0,2}=121\in\text{LES}(\omega)$	is not  admissible because vertex $1$ is visited again by $\omega_{0,2}^{\text{min}}$ after completion of $\omega^{0,2}$.
	\end{example}

	The notion of admissible cut is well defined because the property of being admissible does not depend on the order in which admissible cuts are considered and removed from the original walk. 
 In particular, if a loop-erased section is an admissible cut of an admissible cut of a walk or of its remainder, then it is an admissible of that walk and vice-versa. This is significant because it indicates that, in spite of the strong chronological constraints created by Lawler's process, cutting out admissible cuts does not alter the other cuts relevant temporal context and thence, their admissibility:
 
 
	\begin{proposition}\label{LemmaCut}
	Let $G$ be a digraph and $\omega\in\mathcal{W}(G)$. 
 \begin{enumerate}[leftmargin=1.8cm, label=\emph{\textbf{Case \arabic*.}}, ref=\arabic*]
	\item\label{CasMickey} If $k<k'<l<l'$ or $l<l'<k<k'$ then,
    \end{enumerate}
	\[
    \omega^{k,k'}\in\emph{AdC}(\omega)\text{ and }\,\omega^{l,l'}\in \emph{AdC}(\omega_{k,k'})\iff \omega^{l,l'}\in\emph{AdC}(\omega)\text{ and }\,\omega^{k,k'}\in \emph{AdC}(\omega_{l,l'}).
    \]
   \begin{enumerate}[leftmargin=1.8cm, label=\emph{\textbf{Case \arabic*.}}, ref=\arabic*]
   \addtocounter{enumi}{1}
	\item\label{CasImbrique} If $k<l<l'\leq k'$ then,
   \end{enumerate}
	\[
   \omega^{k,k'}\in \emph{AdC}(\omega)\text{ and }\,\omega^{l,l'}\in \emph{AdC}(\omega^{k,k'})\iff \omega^{l,l'}\in \emph{AdC}(\omega)\text{ and }\,\omega_{l,l'}^{k,k'}\in \emph{AdC}(\omega_{l,l'}).
    \]
	\end{proposition}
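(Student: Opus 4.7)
The plan is to exploit the symmetry of each biconditional—the reverse implication in both cases follows by swapping $(k,k')\leftrightarrow(l,l')$—and to reduce the forward direction to two core verifications: first, that loop-erased section (LES) membership transfers correctly under the excision of an admissible cut, and second, that the minimum enclosing LES $\omega^{\text{min}}$ and its associated non-reappearance condition persist in the excised walk. Both verifications rely heavily on Lemma~\ref{PropLEWLES}: two loop-erased sections can never straddle one another, so $\text{LES}(\omega)$ carries a nested-or-disjoint (forest) structure.

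For Case~\ref{CasMickey}, I would treat the ordering $k<k'<l<l'$; the ordering $l<l'<k<k'$ is analogous. The key preliminary observation is that Lawler's procedures on $\omega$ and on $\omega_{k,k'}$ coincide from position $k'$ onwards, since the LEW at position $k'$ in $\omega$ equals the LEW at position $k$ in $\omega_{k,k'}$ (the cycle $\omega^{k,k'}$ has been erased in both). This identifies the sub-families of $\text{LES}(\omega)$ and $\text{LES}(\omega_{k,k'})$ consisting of sections lying in positions strictly greater than $k'$. With this in hand, if a straddle $\omega^{m,m'}$ of $\omega^{l,l'}$ existed in $\omega$, a case analysis on $m$ shows that it either lies entirely in positions $>k'$ (hence straddles $\omega^{l,l'}$ already in $\omega_{k,k'}$, contradicting the hypothesis), or straddles $\omega^{k,k'}$ itself (contradicting Lemma~\ref{PropLEWLES}), or strictly contains $\omega^{k,k'}$ (in which case the shortened cycle in $\omega_{k,k'}$ is still an LES straddling $\omega^{l,l'}$, again a contradiction). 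This yields $\omega^{l,l'}\in\text{LES}(\omega)$; the admissibility condition transfers because the non-reappearance range sits in positions $>l'>k'$, which are untouched by the excision. The admissibility of $\omega^{k,k'}$ in $\omega_{l,l'}$ is obtained symmetrically.

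For Case~\ref{CasImbrique}, with $k<l<l'\leq k'$, I would first observe that the minimum enclosing LES of $\omega^{l,l'}$ in $\omega$ must lie inside $\omega^{k,k'}$: any LES enclosing $\omega^{l,l'}$ that extended past $\omega^{k,k'}$ would either straddle $\omega^{k,k'}$ (forbidden by Lemma~\ref{PropLEWLES}) or strictly contain it, and inside $[k,k']$ the Lawler process on $\omega$ agrees with that of $\omega^{k,k'}$ viewed as a standalone walk (since $\omega^{k,k'}\in\text{LES}(\omega)$ forces erasures during $[k,k']$ to remain inside this interval). Consequently, the admissibility of $\omega^{l,l'}$ in $\omega$ and in $\omega^{k,k'}$ coincide. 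For the complementary cut $\omega^{k,k'}_{l,l'}\in\text{AdC}(\omega_{l,l'})$, membership in LES holds because the excision preserves the closure $w_k=w_{k'}$ and introduces no new straddles (once more by Lemma~\ref{PropLEWLES}); the minimum enclosing LES of $\omega^{k,k'}$ in $\omega$ lies outside $\omega^{k,k'}$, hence outside $[l,l']$, so it is transported unchanged to $\omega_{l,l'}$, and the non-reappearance condition on $w_k$ is unaffected.

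The main obstacle will be the bookkeeping of indices and the non-reappearance condition across the three distinct loop-erased contexts—the original walk, the excised walk, and the interior of an admissible cut—together with the delicate case analysis on the start and end positions of any hypothetical straddle relative to the boundaries of the cut. In particular, Case~\ref{CasImbrique} requires carefully identifying $\omega^{\text{min}}$ for the various subsections before and after excision and verifying that the same set of positions is tested for vertex reappearance in each setting.
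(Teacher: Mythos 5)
Your overall strategy coincides with the paper's: both reduce the statement to (i) transferring membership in $\text{LES}$ across the excision of an admissible cut and (ii) tracking $\omega^{\text{min}}$ and the non-reappearance condition, with Lemma~\ref{PropLEWLES} (no straddling, hence a nested-or-disjoint structure on $\text{LES}(\omega)$) doing the heavy lifting, and the paper likewise proceeds by a case analysis on where the endpoints of a hypothetical enclosing or straddling section sit relative to $k,k'$. Your Case~\ref{CasMickey} sketch is sound and essentially the paper's argument. However, there are two problems in Case~\ref{CasImbrique}. First, a minor one: the converse of Case~\ref{CasImbrique} does \emph{not} follow by swapping $(k,k')\leftrightarrow(l,l')$ --- the two sides of that biconditional are not images of one another under this exchange (one involves a cut inside a cut, the other a cut of a remainder), and the constraint $k<l<l'\leq k'$ is not preserved by the swap. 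Your subsequent argument does not actually use the swap, so this is a misstatement rather than a fatal flaw, but it should be removed.

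The substantive gap is in the boundary sub-case $l'=k'$, which is exactly the corolla configuration that motivated the definition of admissible cuts. Your key claim that ``the minimum enclosing LES of $\omega^{l,l'}$ in $\omega$ must lie inside $\omega^{k,k'}$'' fails there: strict inclusion requires $l'<$ the right endpoint of the enclosing section, so when $l'=k'$ the cut $\omega^{k,k'}$ itself does \emph{not} belong to $\text{LES}(\omega)^{<}_{l,l'}$, and $\omega^{\text{min}}_{l,l'}$ (if it exists) necessarily extends beyond position $k'$. Consequently $(\omega^{k,k'})^{\text{min}}_{l,l'}$ does not exist (so $\omega^{l,l'}$ is vacuously admissible in $\omega^{k,k'}$) while the admissibility of $\omega^{l,l'}$ in $\omega$ imposes a genuine non-reappearance condition on $w_l=w_{k'}=w_k$ over $w_{l'+1}\cdots w_{m'}$; the two conditions therefore do not ``coincide'' for free. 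The equivalence still holds, but only because in this sub-case $\omega^{\text{min}}_{l,l'}=\omega^{\text{min}}_{k,k'}$ and $w_l=w_k$, so the required non-reappearance is exactly the one supplied by the \emph{other} conjunct of the hypothesis ($\omega^{k,k'}\in\text{AdC}(\omega)$ in one direction, $\omega^{l,l'}\in\text{AdC}(\omega)$ in the other). The paper isolates this sub-case explicitly (its items i) and ii) distinguishing $l'<k'$ from $l'=k'$); your plan, as written, glosses over it, and since this is the one place where the chronological (reverse-order) condition in Definition~\ref{def:admiscut} is really doing work, it needs to be argued rather than asserted.
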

	\begin{proof} 
		\textbf{Case \ref{CasMickey}.}
			We assume $k<k'<l<l'$ without loss of generality, pictorially this is the situation where 
			\vspace{-8mm}
            \[
                \omega=w_0\dots w_k\dots w_{k'}\dots w_l\dots w_{l'}\hdots w_\ell =\OpenWalkMickey
            \]
			Suppose that $\omega^{k,k'}\in \text{AdC}(\omega)$ and $\omega^{l,l'}\in \text{AdC}(\omega_{k,k'})$, we first establish that $\omega^{l,l'}\in \text{AdC}(\omega)$.\\[-.5em] 
   
           Given that $\omega^{k,k'}\in\text{LES}(\omega)$, a closed subwalk is erased from $\omega$ if and only if it is either erased from inside of the $\omega^{k,k'}$ section or from outside of it, i.e. $\omega_{k,k'}$. This is because, by Lemma~\ref{PropLEWLES}, erased sections cannot straddle over one-another owing to their step-by-step erasure in chronological order. 
           Here, $\omega^{l,l'}\in \text{AdC}(\omega_{k,k'})$ and since $\omega_{k,k'}\in\text{LES}(\omega)$, then $\omega^{l,l'}$ is an erased closed subwalk of within an erased closed subwalk of $\omega$. This indicates that $\omega^{l,l'}\in\text{LES}(\omega)$.\\[-.5em] 

            To show that $\omega^{l,l'}$ is an admissible loop-erased section of $\omega$, consider the set  $\text{LES}(\omega)_{l,l'}^<$ of loop-erased sections of $\omega$ which strictly include $\omega^{l,l'}$ as subwalk.
            If $\omega^{\text{min}}_{l,l'}$ does not exist, then $\omega^{l,l'}$ is admissible, $\omega^{l,l'}\in\text{AdC}(\omega)$. Suppose instead that $\omega^{m,m'}:=\omega^{\text{min}}_{l,l'}$ exists and recall that $\omega^{l,l'}\in\text{AdC}(\omega_{k,k'})$. This implies one of the two following possibilities:
            \begin{enumerate}
\item[i)] $(\omega_{k,k'})^{\text{min}}_{l,l'}$ does not exist then, 
					\begin{enumerate}
						\item either $m\in \{1,\dots,k\}\cup \{k',\dots,l\}$ and thus $\omega_{k,k'}^{m,m'}\in\text{LES}(\omega_{k,k'})^{<}_{l,l'}\neq \emptyset$ so its minimum exists, a contradiction;
						\item or $m\in\{k+1,\dots,k'-1\}$ but then $\omega^{k,k'}\in\text{AdC}(\omega)\Rightarrow\omega^{m,m'}\notin \text{LES}(\omega)$, a contradiction.
					\end{enumerate}	
					\item[ii)] $\omega^{n,n'}:=(\omega_{k,k'})^{\text{min}}_{l,l'}$ exists, $n\leq l< l'< n'$, and
					\begin{enumerate}
						\item if $k<n <k'$ then $\omega^{k,k'}\in \text{AdC}(\omega)\Rightarrow \omega^{n,n'}\notin\text{LES}(\omega_{k,k'})$ a contradiction;
						\item if $n\geq k'$ then $\omega^{\text{min}}_{l,l'}=(\omega_{k,k'})^{\text{min}}_{l,l'}$ so $\omega^{l,l'}\in\text{AdC}(\omega_{k,k'})\Rightarrow \omega^{l,l'}\in\text{AdC}(\omega)$; 
						\item if $n\leq k$ then $\omega^{n,n'}\in\text{LES}(\omega)^{<}_{l,l'}$ i.e. either $\omega^{m,m'}$ is a subwalk of $\omega^{n,n'}$ or the two are the same and in both cases $\omega^{l,l'}\in\text{AdC}(\omega_{k,k'})\Rightarrow \omega^{l,l'}\in\text{AdC}(\omega)$.
					\end{enumerate}
				\end{enumerate}
This shows that $\omega^{l,l'}\in\text{AdC}(\omega)$.
     Second, we establish that $\omega^{k,k'}\in \text{AdC}(\omega_{l,l'})$.\\[-.5em] 

Since $\omega^{k,k'}\in\text{AdC}(\omega)$ and given that $k'<l$ implies that $\omega^{k,k'}$ is an erased closed subwalk from outside of $\omega^{l,l'}$, then $\omega^{k,k'}\in\text{LES}(\omega_{l,l'})$. Supposing that $\omega^{\text{min}}_{k,k'}$ does not exist then $(\omega_{l,l'})^{\text{min}}_{k,k'}$ does not exist either and $\omega^{k,k'}\in\text{AdC}(\omega_{l,l'})$.  Now suppose instead that $\omega^{\text{min}}_{k,k'}$ exists, then $(\omega_{l,l'})^{\text{min}}_{k,k'}$ is either identical to $\omega^{\text{min}}_{k,k'}$ or is a subwalk of it. In both situations $\omega^{k,k'}\in\text{AdC}(\omega)$ then entails that vertex $w_k=w_k'$ is not visited again after step $k'$ in $\omega^{\text{min}}_{k,k'}$ and its subwalks, hence $\omega^{k,k'}\in\text{AdC}(\omega_{l,l'})$.\\[-.5em]

			Conversely, assuming that $\omega^{l,l'}\in \text{AdC}(\omega)$ and $\omega^{k,k'}\in \text{AdC}(\omega_{l,l'})$ and proceeding as above we obtain that $\omega^{k,k'}\in \text{AdC}(\omega)$ and $\omega^{l,l'}\in \text{AdC}(\omega_{k,k'})$, which proves Case 1 of the Proposition.\\

   \textbf{Case \ref{CasImbrique}.} $k<l<l'\leq k'$, pictorially this is the situation where,
			\[\omega=w_0\dots w_k\dots w_l \dots w_{l'}\dots w_{k'}\dots w_\ell=\OpenWalkLadder,\]
			or 
			\[\omega=w_0\dots w_k\dots w_{l} \dots w_{k'=l'}\dots w_\ell=\OpenWalkCorolla.\]
			We assume that $\omega^{l,l'}\in \text{AdC}(\omega)$ and $\omega_{l,l'}^{k,k'}\in \text{AdC}(\omega_{l,l'})$ and first establish that $\omega^{k,k'}\in \text{AdC}(\omega)$.\\[-.5em]  
            
            Since $\omega_{l,l'}^{k,k'}\in \text{LES}(\omega_{l,l'})$, by Lemma~\ref{PropLEWLES} there exists no pair of integers $m,m'$ with $0\leq m\leq k\leq m'\leq k'\leq \ell$ and $w_m=w_{m'}\in V(\text{LEW}_k(\omega_{l,l'}))$. Additionally, $k<l<l'\leq k'$ implies that $\text{LEW}_{k}(\omega_{l,l'})=\text{LEW}_{k}(\omega)$. Replacing the former with the latter gets $w_{m}\in V(\text{LEW}_k(\omega))$ which entails, by Lemma~\ref{PropLEWLES}, that $\omega^{k,k'}\in\text{LES}(\omega)$. 
            It remains to verify that $\omega^{k,k'}$ is admissible. To this end remark that $
            (\omega_{l,l'})^{\text{min}}_{k,k'}$ exists if and only if $\omega^{\text{min}}_{k,k'}$ exists since $k<l<l'\leq k'$ and, by assumption, $\omega^{l,l'}\in \text{AdC}(\omega)$. In this case $(\omega_{l,l'})^{\text{min}}_{k,k'}=(\omega_{k,k'})^{\text{min}}_{l,l'}$ so $\omega^{k,k'}\in \text{AdC}(\omega)$.\\[-.5em]  

    Second we show that $\omega^{l,l'}\in \text{AdC}(\omega^{k,k'})$. 
    By assumption $\omega_{l,l'}\in \text{AdC}(\omega)$, then $\omega^{l,l'}$ is a closed erased subwalk of $\omega$ and, by $k<l<l'\leq k'$, it is more precisely a subwalk of $\omega^{k,k'}$. Then $\omega^{l,l'}\in \text{LES}(\omega^{k,k'})$.

    To verify that the loop-erased section $\omega^{l,l'}$ is admissible in $\omega^{k,k'}$,  observe that assumption $\omega^{k,k'}_{l,l'}\in \text{AdC}(\omega_{l,l'})$ implies, depending on the situation:
					\begin{enumerate}
						\item[i)] if $l'<k'$ then $\omega^{k,k'}\in\text{LES}(\omega^{k,k'})^{<}_{l,l'}$ and so \((\omega^{k,k'})^{\text{min}}_{l,l'}=\omega^{\text{min}}_{l,l'}\);
						\item[ii)] if $l'=k'$ then $\text{LES}(\omega^{k,k'})^{<}_{l,l'}$ is empty and  $(\omega^{k,k'})^{\text{min}}_{l,l'}$ does not exist.
					\end{enumerate}
					Therefore in both situations $\omega^{l,l'}$ is an admissible cut of $\omega^{k,k'}$.\\[-.5em]

			The converse results, namely proving that $\omega^{l,l'}\in \text{AdC}(\omega)$ and $\omega_{l,l'}^{k,k'}\in \text{AdC}(\omega_{l,l'})$ while assuming $\omega^{k,k'}\in \text{AdC}(\omega)$ and $\omega^{l,l'}\in\text{AdC}(\omega^{k,k'})$ are obtained completely similarly, yielding Case 2 of the Proposition.
	\end{proof}

\subsection{All walks are totally-ordered temporal trees}
Lemma~\ref{PropLEWLES} and Proposition~\ref{LemmaCut} strongly suggest that any walk on any graph is chronologically equivalent to a tree where the root node is the self-avoiding skeleton of the walk and each non-root node stands for a simple cycle, see Theorem~\ref{WalkTree} below. 
In that tree, Lawler's procedure erases nodes from the leaves down to the root and operates on the branches from left to right (or more precisely along the direction given to time). 
That is, time totally orders the walk's tree structure. 
Formally, this translates into a total order on the set of admissible cuts of a walk:

\begin{definition}[Time-ordering of the admissible cuts]\label{Order}
		Let $G$ be a digraph and $\omega\in\mathcal{W}(G)$. Assuming that $\text{AdC}(\omega)\neq\emptyset$ we define the relation $\leqslant_{\clk}$ on $\text{AdC}(\omega)$ as follows:
		$$
		\omega^{k,k'},\omega^{l,l'}\in\text{AdC}(\omega):~\omega^{k,k'}\leqslant_{\clk}\omega^{l,l'}\iff l\leq k<k'\leq l'\,\text{ or }\,k<k'<l<l'.
		$$
  That is, $\omega^{k,k'}\leqslant_{\clk} \omega^{l,l'}$ if and only if either $\omega^{k,k'}$ is erased prior to $\omega^{l,l'}$ or, if both are erased simultaneously, $\omega^{k,k'}$ began after $\omega^{l,l'}$.
	\end{definition}

	\begin{example}\label{OrderEx}
		Consider the walk, \[\omega=34555444678879=\WalkExampleOrder{}{{3,...,9}}\]
		and three of its admissible cuts, $\omega^{2,4}=555$, $\omega^{3,4}=55$ and $\omega^{10,11}=88$. Then \(\omega^{3,4}\leqslant_{\clk} \omega^{2,4}\leqslant_{\clk} \omega^{10,11}\).
	\end{example}
	
	\begin{proposition}\label{TotalOrder}
		Let $G$ be a digraph, $\omega\in\mathcal{W}(G)$ and suppose that $\text{AdC}(\omega)\neq\emptyset$. Then $\text{AdC}(\omega)$ is totally ordered by $\leqslant_{\clk}$.
	\end{proposition}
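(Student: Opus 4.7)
The plan is to distill a \emph{disjoint-or-nested dichotomy} for admissible cuts and then derive the four total-order axioms from it by short case analyses on the four endpoints $k, k', l, l'$. Reflexivity is immediate (set $(l,l')=(k,k')$ in the first disjunct of Definition~\ref{Order}), and antisymmetry, transitivity, and totality all reduce to enumerations over the possible interleavings once one knows that straddling and endpoint-coincidence cannot occur between two admissible cuts.

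Fix two distinct admissible cuts $\omega^{k,k'}$ and $\omega^{l,l'}$ and, by symmetry, assume $k \le l$. The only interleavings of the four endpoints that fail to produce a comparability under $\leqslant_\clk$ are \emph{straddling}, $k<l<k'<l'$, and \emph{endpoint meeting}, $k<k'=l<l'$. Every other configuration---fully disjoint ($k'<l$), or nested possibly with a shared left or right endpoint---directly matches one of the two disjuncts of Definition~\ref{Order}. Straddling is excluded in one line by Lemma~\ref{PropLEWLES}, since admissible cuts are loop-erased sections and loop-erased sections cannot straddle.

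Ruling out the endpoint-meeting case is where the real work lies and is what I expect to be the main obstacle. When $k'=l$, the vertices $w_k, w_{k'}, w_l, w_{l'}$ all coincide, and I would first show that the concatenated cycle $\omega^{k,l'}$ also belongs to $\text{LES}(\omega)$. This follows by tracing the recursive construction of $\text{LES}$ in Remark~\ref{AltLES} and applying rule (2) (iteratively if $\omega^{l,l'}$ itself was produced through a chain of rule-(2) insertions), with the key input being $\omega^{k,k'}=\omega^{k,l}\in\text{LES}(\omega)$. Consequently $\text{LES}(\omega)^{<}_{k,k'}$ is non-empty, so $\omega^{\min}_{k,k'}$ exists; tightness forces its left endpoint to be $k$ (because $\omega^{k,l'}$ is already a candidate) and its right endpoint $m'$ to satisfy $k'<m'\le l'$. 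Closure of $\omega^{\min}_{k,k'}$ gives $w_{m'}=w_k$, so vertex $w_k$ reappears in the segment $w_{k'+1}\cdots w_{m'}$, directly contradicting the admissibility of $\omega^{k,k'}$ in Definition~\ref{def:admiscut}.

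With the dichotomy in hand, every remaining interleaving realises one of the two disjuncts of Definition~\ref{Order}, giving totality. Antisymmetry follows by combining the two disjuncts for both orderings and observing that only $(l,l')=(k,k')$ survives. Transitivity requires a short $2\times 2$ case check on the pairs $(\omega^{k,k'}\leqslant_\clk \omega^{l,l'},\, \omega^{l,l'}\leqslant_\clk \omega^{m,m'})$; the only non-immediate subcase, in which $\omega^{k,k'}$ is disjoint-to-the-left of $\omega^{l,l'}$ and $\omega^{m,m'}$ contains $\omega^{l,l'}$, is closed off by reapplying the dichotomy to the pair $(\omega^{k,k'},\omega^{m,m'})$. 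Apart from these routine verifications, the only substantial ingredient is the rule-(2) propagation argument for the endpoint-meeting case.
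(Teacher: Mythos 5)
Your proof is correct and follows the same overall strategy as the paper's own proof --- reduce non-comparability under $\leqslant_{\clk}$ to a forbidden interleaving of the four endpoints and then rule that interleaving out --- but you are substantially more careful where it matters. The paper simply asserts that non-comparability forces $k<l\leq k'\leq l'$ and that Lemma~\ref{PropLEWLES} forbids this; strictly speaking that lemma only forbids straddling between loop-erased sections with \emph{distinct} roots ($w_k=w_{k'}\neq w_l=w_{l'}$) and with strict inequalities, so the endpoint-meeting configuration $k<k'=l<l'$ is not literally covered by it. Your rule-(2) propagation argument --- producing $\omega^{k,l'}\in\text{LES}(\omega)$, hence an $\omega^{\min}_{k,k'}$ starting at $k$ and ending at some $m'$ with $k'<m'\leq l'$ and $w_{m'}=w_k$, so that $w_k$ reappears after step $k'$ inside $\omega^{\min}_{k,k'}$ in violation of Definition~\ref{def:admiscut} --- closes exactly that hole and is valid.

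One residual imprecision: the same caveat about Lemma~\ref{PropLEWLES} applies to your strict-straddling case. If $k<l<k'<l'$ but the two cuts share their root vertex ($w_k=w_l$, hence all four endpoints carry the same vertex), the lemma's hypothesis $w_k\neq w_l$ fails and it does not apply; such configurations genuinely occur inside $\text{LES}(\omega)$ (e.g.\ $\omega^{0,2}$ and $\omega^{1,3}$ in $\omega=11111$), so ``one line by Lemma~\ref{PropLEWLES}'' is not quite enough there. The fix is already contained in your own argument: in that configuration $\omega^{k,l}$, $\omega^{l,k'}$ and $\omega^{k',l'}$ all close at the same vertex, rule (2) of Remark~\ref{AltLES} again yields $\omega^{k,l'}\in\text{LES}(\omega)$, and the identical admissibility argument shows $\omega^{k,k'}\notin\text{AdC}(\omega)$. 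With that one sentence added, your disjoint-or-nested dichotomy --- and hence totality, antisymmetry and transitivity --- is fully established, and your write-up is in fact more complete than the proof printed in the paper.
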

	
	\begin{proof}
		By contradiction. Suppose that there exists two admissible cuts of $\omega$, $\omega^{k,k'}$ and $\omega^{l,l'}$ such that $k<l'$ wlog, and which cannot be time-ordered. Then necessarily $k<l\leq k'\leq l'$. But since both $\omega^{k,k'}$ and $\omega^{l,l'}$ are admissible they are loop-erased sections of $\omega$. But by Lemma~\ref{PropLEWLES} they may not straddle, that is we may not have $k<l\leq k'\leq l'$, a contradiction. Transitivity, reflexivity and anti-symmetry of $\leqslant$ follow immediately.
	\end{proof}

	\begin{example}\label{CutOrderEx}
 Consider again the walk of Example~\ref{OrderEx}. It admissible cuts set is $$\text{AdC}(\omega)=\{\omega^{1,7},\omega^{2,4},\omega^{3,4},\omega^{5,7},\omega^{6,7},\omega^{9,12},\omega^{10,11}\},$$ and the total order on it is
		\[\omega^{3,4}\leqslant_{\clk}\omega^{2,4}\leqslant_{\clk}\omega^{6,7}\leqslant_{\clk} \omega^{5,7}\leqslant_{\clk}\omega^{1,7}\leqslant_{\clk}\omega^{10,11}\leqslant_{\clk} \omega^{9,12}.\]
	\end{example}

\begin{theorem}[All walks are temporal-trees]\label{WalkTree}
Let $G$ be a digraph and $\omega\in\mathcal{W}(G)$. Then $\omega$ has the temporal-structure of a tree $t(\omega)$ whose nodes are totally ordered by $\leqslant_{\clk}$ according to a depth-first order. 
\end{theorem}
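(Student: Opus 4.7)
The plan is to construct the tree $t(\omega)$ explicitly from the admissible cuts of $\omega$, and then verify that the total order $\leqslant_{\clk}$ of Proposition~\ref{TotalOrder} coincides with a post-order depth-first traversal of this tree. The key structural input is that by Proposition~\ref{LemmaCut} and Lemma~\ref{PropLEWLES}, any two admissible cuts are either nested (one properly containing the other) or completely time-disjoint, which is exactly the combinatorics of a rooted planar tree.

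Concretely, I would take the root of $t(\omega)$ to be the self-avoiding skeleton produced once Lawler's procedure terminates, and introduce one additional node for each element of $\text{AdC}(\omega)$. The parent of $\omega^{k,k'}$ would be the root whenever $\omega^{\text{min}}_{k,k'}$ does not exist, and otherwise the smallest admissible cut of $\omega$ strictly containing $\omega^{k,k'}$. First I need to check that this parent is well-defined: the set of admissible cuts strictly containing a given one is totally ordered by inclusion (partial overlaps are forbidden by Lemma~\ref{PropLEWLES}), hence has a unique minimum when non-empty; I also need to check that this minimum is itself admissible, which I would handle by induction, peeling off innermost cuts using Case~\ref{CasImbrique} of Proposition~\ref{LemmaCut} which guarantees admissibility is preserved under excision. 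I would then use $\leqslant_{\clk}$ restricted to the children of each internal node to fix a left-to-right planar order on the tree, observing that pairwise time-disjoint siblings are ranked chronologically by the second clause of Definition~\ref{Order}.

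To verify that $\leqslant_{\clk}$ matches a post-order depth-first traversal of $t(\omega)$, I would induct on $|\text{AdC}(\omega)|$. For the inductive step, pick the $\leqslant_{\clk}$-minimum admissible cut $\omega^{k,k'}$: by the construction above it must be a leaf lying in the leftmost open branch, as it contains no other admissible cut (otherwise an inner cut would be $\leqslant_{\clk}$-smaller by the nested clause of Definition~\ref{Order}) and among its disjoint siblings it is the earliest. Excising it via $\omega \mapsto \omega_{k,k'}$ reduces the problem to $\omega_{k,k'}$, whose tree is $t(\omega)$ with that one leaf removed; Case~\ref{CasImbrique} of Proposition~\ref{LemmaCut} ensures the remaining cuts stay admissible with their original $\leqslant_{\clk}$-order intact. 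This is exactly the defining recursion of post-order depth-first traversal: visit the leftmost-deepest leaf, then recurse.

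The main obstacle I foresee is the first step: formalising the nest-or-disjoint dichotomy as a genuine rooted planar tree and checking that the putative parent of an admissible cut is itself admissible. The difficulty is bookkeeping rather than conceptual: one must rule out the possibility that $\omega^{\text{min}}_{k,k'}$, which lives a priori only in $\text{LES}(\omega)$, fails the last-vertex condition of Definition~\ref{def:admiscut}, and if so walk outward along the nesting chain to locate the nearest admissible ancestor. Example~\ref{CutOrderEx}, whose order $\omega^{3,4}\leqslant_{\clk}\omega^{2,4}\leqslant_{\clk}\omega^{6,7}\leqslant_{\clk}\omega^{5,7}\leqslant_{\clk}\omega^{1,7}\leqslant_{\clk}\omega^{10,11}\leqslant_{\clk}\omega^{9,12}$ is precisely the post-order on the tree whose root-children are $\omega^{1,7}$ and $\omega^{9,12}$, serves as a useful sanity check of the whole construction.
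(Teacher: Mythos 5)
Your proof is essentially correct, but it takes a genuinely different route from the paper's. The paper first pushes $\omega$ through the cactus map $C$ of Eq.~(\ref{CactusMapDef}), obtaining a walk $\kappa=C(\omega)$ in which any two erased simple cycles meet in at most their root, and then declares one tree node per \emph{simple cycle} of $\kappa$ (plus a root node for the self-avoiding skeleton), with adjacency given by shared vertices and the order given by erasure order. You instead build the tree directly on $\mathrm{AdC}(\omega)$, one node per \emph{admissible cut}, with parenthood given by minimal strict inclusion; note that these are genuinely different trees in general (for the corolla $12131$ the paper's tree has two non-root nodes, one per petal, while yours has only one, since $\omega^{0,2}=121$ is not admissible). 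Both are legitimate readings of the informally stated theorem, and your tree has the advantage of being the object on which $\leqslant_{\clk}$ is literally defined, so your post-order verification by induction on $|\mathrm{AdC}(\omega)|$ (peeling off the $\leqslant_{\clk}$-minimal cut, which is necessarily a leaf, and invoking Proposition~\ref{LemmaCut} to see that the remaining cuts and their order survive the excision) is cleaner and matches exactly the chains $c_1\leqslant_{\clk}\dots\leqslant_{\clk}c_n$ used later in the antipode formula. What you lose is that the paper's proof does double duty: the cactus map $C$ and the invariance property Eq.~(\ref{finjectAdC}) introduced inside that proof are reused in Definition~\ref{DefCac}, in Proposition~\ref{SubAlgebras} and throughout \S\ref{Cmorphism} (Theorem~\ref{WalkToCactus}), so a paper adopting your proof would still need to set up $C$ separately. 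The one point you flag as an obstacle --- that $\omega^{\text{min}}_{k,k'}$ lives in $\mathrm{LES}(\omega)$ and may fail admissibility --- is real but benign: you do not need it to be admissible, only the inclusion-minimum of the set of \emph{admissible} cuts strictly containing $\omega^{k,k'}$, which is well defined because that set is totally ordered by inclusion (no straddling, Lemma~\ref{PropLEWLES}); your Example~\ref{CutOrderEx} check confirms the construction.
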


\begin{proof}
   To establish the theorem, we first map walks to cacti  then cacti to trees:
   \begin{definition}[Cactus]\label{DefCac}
		Let $G$ be a digraph. A walk $\omega=w_0\dots w_\ell\in \mathcal{W}(G)$ is a cactus if and only if for any  $0\leq k<k'\leq \ell$, $w_k=w_{k'}\iff \omega^{k,k'}\in \text{LES}(\omega)$. We denote  $\mathcal{C}$ the set of all cacti on the complete graph \(K_\mathbb{N}\) with $V(K_\mathbb{N})=\mathbb{N}$ and by $\text{Cact}(G)$ the vector space spanned by the cacti on $G$.
	\end{definition}
    In a cactus all repeated vertices delimit valid loop-erased sections, which means that there cannot be patterns such as $\omega=1\underline{2}1\underline{2}1$ as $212\notin\text{LES}(\omega)$. Intuitively a cactus is therefore a `disentangled' walk, where every instance of repeated vertex is the root of a simple-cycle erased by Lawler's procedure. 
    We may always map walks to cacti by defining, 
   \begin{align}
   C:\,\mathcal{W}(G)&\to \mathcal{C},\label{CactusMapDef}\\
   \omega=w_0\cdots w_\ell&\mapsto \kappa:=C(\omega)=c_0\cdots c_\ell,\nonumber
   \end{align}
   where $\kappa$ is the cactus defined as follows: $c_0=w_0$ and for any $k\in\{0,...,\ell-1\}$ if $\text{LEW}_{k+1}(\omega)=\text{LEW}_k(\omega)w_{k+1}$ then \(c_{k+1}=\max(V(c_0\dots c_k))+1\); else  $c_{k+1}=c_l$ where \(l=\max(i\in\{0,\dots,k\},~w_l=w_{k+1})\). 
   
   In words, considering the loop-erased walk $\text{LEW}_k(\omega)$ at step $k$, if  vertex $w_{k+1}$ reached at step $k+1$ is distinct from those of $\text{LEW}_k(\omega)$ then $c_{k+1}$ is a vertex with integer label given by the length of $\text{LEW}_k(\omega)$ plus one (an expedient ensuring that we map distinct labels to distinct labels). If instead vertex $w_{k+1}$ was visited at some step $l$ prior to step $k+1$ in the loop-erased walk, that is $w_l\cdots w_{k+1}$ closes an erased-section, then $c_{k+1}$ is given the same label as $c_l$. For example walk $\omega=12121$ becomes $\kappa:=C(\omega)=12131$. Because the new labels are not labels of nodes of $G$, it may be that $\kappa$ is not a valid walk on $G$ but at least it is  a walk on a complete graph $K_{\mathbb{N}}$, see \S\ref{Cmorphism} below.
   This is sufficient for our purpose: by definition $\omega$ and $\kappa:=C(\omega)$ have the same length,  $\kappa$ is a cactus, and $\omega$ and $\kappa$ share the same temporal structure,
   \begin{equation}\label{finjectAdC}
   \omega^{k,k'}\in \text{AdC}(\omega)\iff \kappa^{k,k'}\in \text{AdC}\big(\kappa\big).
   \end{equation}
Since any two simple cycles in cacti share at most one vertex (their roots), we define a tree $t(\omega)$ from $\kappa$ by drawing a tree-node $t$ for every simple cycle $\sigma$ of $\kappa$. Two nodes $t$ and $t'$ of the tree are connected if and only if the corresponding simple cycles $\sigma$ and $\sigma'$ share a vertex in $\kappa$. Finally we add a root node representing the (possibly trivial) self-avoiding skeleton of $\omega$. The time-order $\leqslant_{\clk}$ now totally orders the nodes of $t(\omega)$: for $t,t'$ two nodes of $t(\omega)$ corresponding to simple cycles $\sigma$ and $\sigma'$ of $\kappa$, $t\leqslant_{\clk} t' \iff \sigma$ is erased prior to $\sigma$ in $\kappa$. This builds a reverse depth-first order on the nodes of $t(\omega)$ with the top-left leaf of the tree corresponding to the first erased simple cycle, hence the smallest as per $\leqslant_{\clk}$.
\end{proof}
\begin{example}
As an example, consider the walk $\omega=12332331$. Then $\kappa:=C(\omega)=12332441$ and the tree $\tau:=t(\omega)$ is
$$
  \omega= \WalkNonAssocNonCACT,\quad \kappa = \WalkNonAssocCACT,\quad t(\omega)=\TreeCACT
$$
The simple cycles of $\kappa$ are $1241$, corresponding to node $d$ of the tree; $232$ (node $b$ of the tree); $33$ (node $a$) and $66$ (node $c$). The root node of $t(\omega)$ stands for the trivial walk `1' on vertex 1, which is the self-avoiding skeleton of $\omega$. The time-order on the tree nodes is $a\leqslant_{\clk} b\leqslant_{\clk} c \leqslant_{\clk} d \leqslant \text{Root}.$

Although the tree $t(\omega)$ depends on the walk $\omega$, an universal tree can be constructed for all walks of a given digraph $G$. Considering only the trees $t(\omega)$ obtained from walks with no repeated sections produces a finite number of structurally distinct trees from all walks on $G$. These trees can be ordered partially by inclusion and the resulting poset always admits an unique maximum. This maximum tree is of paramount importance to $G$: it is one of the few invariants of its hike monoid \cite{Fromentin2023, Giscard2017}, and dictates the shape of all branched continued fractions counting walks on $G$ \cite{Giscard2012}. 
	
\end{example}

\section{The co-preLie co-algebra of walks}\label{sectionCoproduct}
\subsection{Co-product}
With the notion of admissible cut, we may now formally define the co-product associated to Lawler's process, by mapping a walk to a sum over all its admissible cuts tensored with their remainders:
	
	\begin{definition}[Co-product]
		Let $G$ be a digraph. The co-product associated to Lawler's process is the linear map $\Delta_{\text{CP}}$ defined by,
		\[\Delta_{\text{CP}} :\left\{\begin{array}{rcl} \WGamma{W}{G} &\rightarrow &\WGamma{W}{G}\otimes \WGamma{W}{G} \\
			\omega &\mapsto& \displaystyle\Delta_{\text{CP}}(\omega)=\sum_{\omega^c\in \text{AdC}(\omega)} \omega_c \otimes \omega^c.\end{array}\right.\]	
	\end{definition}
	
	An essential property of this co-product is that a walk is primitive for it if and only if it is a simple path or a simple cycle. 
	\begin{proposition}
		Let $G$ be a digraph and $\omega\in\mathcal{W}(G)$. Then,
		\[\Delta_{\emph{\text{CP}}}(\omega)=0\iff \omega\in\emph{\text{SAW}}(G)\cup\emph{\text{SAP}}(G).\]
	\end{proposition}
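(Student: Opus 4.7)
The coproduct $\Delta_{\text{CP}}(\omega)$ vanishes if and only if $\text{AdC}(\omega) = \emptyset$, so the statement reduces to characterizing those walks with no admissible cut. I would prove the two implications separately, the forward direction by inspection and the converse by exhibiting an explicit cut.

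For $(\Leftarrow)$: if $\omega \in \text{SAW}(G)$, all vertices are pairwise distinct, so no $k<k'$ gives $w_k=w_{k'}$ and $\text{LES}(\omega) = \emptyset$. If $\omega \in \text{SAP}(G)$, the only pair with $w_k=w_{k'}$ is $(0,\ell)$, so $\text{LES}(\omega) \subseteq \{\omega\}$, but Definition~\ref{def:admiscut} excludes $\omega$ from $\text{AdC}(\omega)$.

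For the converse, assume $\omega \notin \text{SAW}(G) \cup \text{SAP}(G)$. I would first argue $\text{LES}(\omega) \setminus \{\omega\} \neq \emptyset$: since $\omega$ is not self-avoiding, Lawler's procedure erases something; if only $\omega$ itself were erased, no intermediate erasure would occur, which forces $w_0, \ldots, w_{\ell-1}$ to be pairwise distinct with $w_0 = w_\ell$, i.e.\ $\omega \in \text{SAP}(G)$, contradiction. I would then take $k'$ maximal among end-indices of sections in $\text{LES}(\omega) \setminus \{\omega\}$, and $k$ maximal among start-indices given $k'$, as the candidate admissible cut.

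The main obstacle is to prove $\omega^{k,k'}$ is actually admissible. Maximality of $k'$ forces any element of $\text{LES}(\omega)^{<}_{k,k'}$ to equal $\omega$, so it suffices to treat the case $\omega^{\text{min}}_{k,k'} = \omega$ and to show $w_k \notin \{w_{k'+1}, \ldots, w_\ell\}$. I plan to proceed by contradiction with $m^{*}$ minimal such that $w_{m^{*}} = w_k$ and $m^{*} > k'$. First I would show that $w_k$ survives in the loop-erased walk through step $m^{*} - 1$: any erasure removing $w_k$ in this range would itself be an LES section ending strictly between $k'$ and $m^{*}$ and distinct from $\omega$, violating the maximality of $k'$. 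Then the recursive construction of $\text{LES}(\omega)$ at step $m^{*}$ (Remark~\ref{AltLES}) forces $\omega^{k',m^{*}}$ and, by chaining with $\omega^{k,k'}$, also $\omega^{k,m^{*}}$ into $\text{LES}(\omega) \setminus \{\omega\}$, both ending at $m^{*} > k'$, the required contradiction. The delicate point is tracking $w_k$'s fate through Lawler's intermediate steps using only the extremal property of $k'$, leveraging the recursive characterization of $\text{LES}$ rather than any direct structural description of the walk.
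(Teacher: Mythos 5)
Your proof is correct and follows the same strategy as the paper's: the easy direction is handled identically, and for the converse both arguments exhibit the last-erased cycle as an explicit admissible cut (your extremal choice of maximal end-index $k'$ is the same object as the paper's ``last simple cycle erased''). If anything, your version is the more careful one --- by allowing $\omega^{\text{min}}_{k,k'}$ to equal $\omega$ and verifying the second admissibility condition through the recursive characterization of $\text{LES}$, you cover the case where the whole closed walk is itself erased (e.g.\ $\omega=12321$), a case the paper's blanket assertion that $\omega^{\text{min}}_{k,k'}$ does not exist glosses over.
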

	
	\begin{proof}

 Let $\omega=w_0\dots w_\ell\in\mathcal{W}(G)$. If $\omega\in \text{SAW}(G)$ then it has no cycles and $\text{AdC}(\omega)=\emptyset$ so $\Delta(\omega)=0$. If $\omega\in \text{SAP}(G)$, then $\text{LES}(\omega)=\{\omega\}$, $\text{AdC}(\omega)=\emptyset$ since the walk is not an admissible cut of itself, therefore $\Delta(\omega)=0$. 

        Now suppose that $\omega\notin \text{SAW}(G)\cup\text{SAP}(G) $. Then $\omega$ has at least one simple cycle and we may consider the last such cycle $\omega^{k,k'}\in\text{LES}(\omega)$ erased from $\omega$ by Lawler's process. This simple cycle cannot be $\omega$ itself since $\omega\notin \text{SAP}(G)$. Furthermore after step $k'$ no vertex of $\text{LEW}_{k'}(\omega)$ is visited again in $w_{k'+1}\cdots w_\ell$ as otherwise $\omega^{k,k'}$ would not be the last erased simple cycle. This simply indicates that the last erased simple cycles is not within a wider erased section by virtue of being the last to be removed. Thus $\omega^{\text{min}}_{k,k'}$ does not exist, $\omega^{k,k'}\in \text{AdC}(\omega)$, and $\Delta(\omega)\neq 0$.
	\end{proof}

 \begin{example}\label{DeltaOmega}
 Consider the walk $\omega=1233234441$, then
 $$
 \Delta_{\text{CP}}(1233234441)=123323441\otimes 44 + 12332341 \otimes 444 + 123234441 \otimes 33 + 1234441\otimes 2332,
 $$
 or, graphically,
\begin{align*}
&\Delta_{\text{CP}}\left(\WalkNonAssocCP\right)=\WalkNonAssocCPtermA\otimes \WalkNonAssocCPterma +\WalkNonAssocCPtermB\otimes \WalkNonAssocCPtermb\\ &\hspace{55mm}+\WalkNonAssocCPtermC\otimes \WalkNonAssocCPtermc+\WalkNonAssocCPtermD\otimes \WalkNonAssocCPtermd
\end{align*}
   \end{example}

	\subsection{The co-preLie property}
	Having established the definition of the co-product associated to the Lawler process and identified its primitive walks, we now turn to the co-algebraic structure it gives to the walk vector space $\WGamma{W}{\G}$. Recall that:
	\begin{definition}
		A co-preLie co-algebra is a couple $(\mathcal{V},\Delta)$ where $\mathcal{V}$ is a vector space and $\Delta:\mathcal{V}\rightarrow \mathcal{V}\otimes \mathcal{V}$ is a linear map such that for any $v\in \mathcal{V}$ the following relation is satisfied
		\[(\Delta\otimes \Id-\Id\otimes \Delta)\circ \Delta(v)=(\Id\otimes \tau)\circ\left( \Delta\otimes \Id-\Id\otimes \Delta\right)\circ \Delta(v)\]
		where $\Id$ is the identity map and $\tau$ is the twisting linear map, $\tau: \mathcal{V}\otimes \mathcal{V}\to \mathcal{V}\otimes \mathcal{V}$, $\tau(u\otimes v)=v\otimes u$.	
	\end{definition}

	\begin{theorem}\label{DeltaCPTHM}
		The vector space $\WGamma{W}{\G}$, equipped with the coproduct $\Delta_{\emph{\text{CP}}}$, is a co-preLie (but not co-unital) co-algebra.
	\end{theorem}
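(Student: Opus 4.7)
The plan is to verify, for each $\omega \in \mathcal{W}(G)$, that $((\Delta_{\text{CP}} \otimes \Id) - (\Id \otimes \Delta_{\text{CP}})) \circ \Delta_{\text{CP}}(\omega)$ is invariant under $\Id \otimes \tau$, by expanding both sides as double sums indexed by ordered pairs of admissible cuts and pairing terms via Proposition~\ref{LemmaCut}. Concretely, $(\Delta_{\text{CP}} \otimes \Id)\Delta_{\text{CP}}(\omega)$ becomes a sum over pairs $(\omega^{k,k'}, \omega^{l,l'})$ with $\omega^{k,k'} \in \text{AdC}(\omega)$ and $\omega^{l,l'} \in \text{AdC}(\omega_{k,k'})$, producing tensor terms $(\omega_{k,k'})_{l,l'} \otimes (\omega_{k,k'})^{l,l'} \otimes \omega^{k,k'}$; while $(\Id \otimes \Delta_{\text{CP}})\Delta_{\text{CP}}(\omega)$ is a sum over pairs with $\omega^{l,l'} \in \text{AdC}(\omega^{k,k'})$, giving $\omega_{k,k'} \otimes (\omega^{k,k'})_{l,l'} \otimes \omega^{l,l'}$.

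I would then classify each ordered pair by the relative position of its two cuts in $\omega$. By Lemma~\ref{PropLEWLES}, two loop-erased sections are either disjoint or nested (never straddling), so the cases of Proposition~\ref{LemmaCut} are exhaustive once one uses the admissibility constraints of Definition~\ref{def:admiscut} to rule out the boundary overlaps. The sum $(\Id \otimes \Delta_{\text{CP}})\Delta_{\text{CP}}(\omega)$ involves only nested pairs (the inner cut lies inside the outer), whereas $(\Delta_{\text{CP}} \otimes \Id)\Delta_{\text{CP}}(\omega)$ splits into a \emph{disjoint} part and a \emph{nested-reversed} part, the latter corresponding to nested pairs where the inner cut is extracted first and the outer cut then appears as a deformed cut of the remainder.

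The crucial step is to show that the nested contributions cancel between the two double sums. For a fixed nested pair $(L, K) = (\omega^{l,l'}, \omega^{k,k'})$ with $k < l < l' \leq k'$, the contribution to $(\Id \otimes \Delta_{\text{CP}})\Delta_{\text{CP}}(\omega)$ is $\omega_{k,k'} \otimes (\omega^{k,k'})_{l,l'} \otimes \omega^{l,l'}$. Case~\ref{CasImbrique} of Proposition~\ref{LemmaCut} bijects this with a term of $(\Delta_{\text{CP}} \otimes \Id)\Delta_{\text{CP}}(\omega)$ indexed by $L$ first and then by the deformation $\omega_{l,l'}^{k,k'}$ of $K$ inside $\omega_{l,l'}$. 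A direct computation shows that the double remainder $(\omega_{l,l'})_{k,k'}$ equals $\omega_{k,k'}$, and that both $(\omega_{l,l'})^{k,k'}$ and $(\omega^{k,k'})_{l,l'}$ coincide with the closed subwalk $w_k \cdots w_l w_{l'+1} \cdots w_{k'}$. Hence the two tensor terms are equal and cancel in the difference.

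Finally, for the disjoint remainder, Case~\ref{CasMickey} of Proposition~\ref{LemmaCut} furnishes an involutive pairing $(C_1, C_2) \leftrightarrow (C_2, C_1)$ between ordered pairs of disjoint admissible cuts; since removal is commutative for disjoint sections, the double remainder $\omega_{C_1, C_2}$ is independent of the order, and the associated tensor terms $\omega_{C_1,C_2} \otimes \omega^{C_2} \otimes \omega^{C_1}$ and $\omega_{C_1,C_2} \otimes \omega^{C_1} \otimes \omega^{C_2}$ are exchanged by $\Id \otimes \tau$. Together with the nested cancellation this proves the co-preLie relation. The main technical obstacle I anticipate lies in the nested step: checking the tensor factors match in all boundary configurations (notably when $l' = k'$, where the explicit form of the deformed closed subwalk degenerates) and confirming that the potential configurations not explicitly listed in Proposition~\ref{LemmaCut}, such as $l = k$ with $l' < k'$, are in fact excluded by the admissibility condition on $L$ (as the outer cut $K$ would place $w_k$ back into the loop-erased walk immediately after $L$, violating admissibility).
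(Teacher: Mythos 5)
Your proposal is correct and follows essentially the same route as the paper's direct proof: both expand the two iterated coproducts into double sums over ordered pairs of admissible cuts, use Lemma~\ref{PropLEWLES} to restrict to disjoint or nested configurations, cancel the nested contributions via Case~\ref{CasImbrique} of Proposition~\ref{LemmaCut} (including the $l'=k'$ boundary case and the exclusion of $l=k$ by admissibility), and observe that the surviving disjoint terms are symmetric under $\Id\otimes\tau$ by Case~\ref{CasMickey}. The only difference is cosmetic bookkeeping of which cut is labelled first; the identifications $(\omega_{l,l'})_{k,k'}=\omega_{k,k'}$ and $(\omega_{l,l'})^{k,k'}=(\omega^{k,k'})_{l,l'}$ that you single out are exactly what the paper's cancellation relies on.
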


We present two proofs of this result. The first, given immediately below, is a direct approach based on the properties of admissible cuts. The second proof, presented in \S\ref{BraceSection}, obtains the theorem as a corollary of the Hopf structure on the tensor algebra generated by $\mathcal{W}(G)$ via a brace coalgebra construction.
 
	\begin{proof} Let $\omega=w_0\dots w_\ell\in\mathcal{W}(G)$. We begin with evaluating 
		$
		(\Delta_{\text{CP}}\otimes \Id)\circ \Delta_{\text{CP}}(\omega)
		$
		explicitly.
		To that end consider an admissible cut $\omega^{k,k'}\in\text{AdC}(\omega)$, assuming that  $\omega_{k,k'}$ is not self-avoiding nor a simple cycle as this leads to a 0 result.
        Then, 
		\((\Delta_{\text{CP}}\otimes \Id)(\omega_{k,k'}\otimes \omega^{k,k'})\)
	      yields a sum over cuts that fall into four distinct cases, depending on the second cut's coordinates $l,\,l'$ relatively to $k,\,k'$:
		\begin{itemize}
			\item[1)] $l<l'<k<k'$, i.e. $\omega=w_0\cdots w_l\cdots w_{l'}\cdots w_{k}\cdots w_{k'}\cdots w_\ell$, this gets cut as $\omega_{k,k';l,l'}\otimes \omega^{l,l'}\otimes \omega^{k,k'}$,
			\item[2)] $k<k'<l<l'$, i.e $\omega=w_0\cdots w_k\cdots w_{k'}\cdots w_{l}\cdots w_{l'}\cdots w_\ell$, this gets cut as $\omega_{k,k';l,l'}\otimes \omega^{l,l'}\otimes \omega^{k,k'}$,
			\item[3)] $l<k<k'< l'$, i.e. $\omega=w_0\cdots w_l\cdots w_{k}\cdots w_{k'}\cdots w_{l'}\cdots w_\ell$, this gets cut as $\omega_{l,l'}\otimes \omega_{k,k'}^{l,l'}\otimes \omega^{k,k'}$,
			\item[4)] $l<l'=k<k'$, i.e. $\omega=w_0\cdots w_l\cdots w_{l'=k}\cdots w_{k'}\cdots w_{\ell}$, this gets cut as $\omega_{l,k'}\otimes \omega_{k,k'}^{lk} \otimes \omega^{k,k'}$. Remark that $\omega^{l,l'}$ is not an admissible cut of $\omega$ because $w_{l'}=w_{k'}$ occurs after step  $l'$ in $\omega^{\text{min}}_{l,l'}$.
		\end{itemize}
		By Case~\ref{CasMickey} of Proposition~\ref{LemmaCut}, if an admissible cut falls into situation 1) above, another one will be admissible as per situation 2). Thus,
		\begin{align*}
			&(\Delta_{\text{CP}}\otimes \Id)\circ \Delta_{\text{CP}}(\omega)=\sum_{\substack{c\in \text{AdC}(\omega)\\ \omega_{k,k'}\notin\text{SAW}(\G)\\ \omega_{k,k'}\notin\text{SAP}(\G)}}\sum_{c'\in \text{AdC}(\omega_{k,k'})}\begin{cases}
				\omega_{k,k';l,l'}\otimes \omega^{l,l'}\otimes \omega^{k,k'}&l<l'<k<k' \\
				\omega_{k,k';l,l'}\otimes \omega^{l,l'}\otimes \omega^{k,k'}&k<k'<l<l',\\
				\omega_{l,l'}\otimes \omega_{l,l'}^{k,k'}\otimes \omega^{k,k'}& l<k<k'< l',\\
				\omega_{l,k'}\otimes \omega_{k,k'}^{l,k} \otimes \omega^{k,k'}&l<l'=k<k',                                               
			\end{cases}
		\end{align*}
		where we used $c:=\omega^{k,k'}$ and $c':=\omega^{l,l'}$ to alleviate the notation.\\

       Now we turn to $(\Id\otimes \Delta_{\text{CP}})\circ \Delta_{\text{CP}}(\omega)$. 
		Let again $\omega^{k,k'}\in\text{AdC}(\omega)$ be an admissible cut of $\omega$ which we assume not to be a simple cycle as this would lead to a 0 result. Then,
		\((\Id\otimes \Delta_{\text{CP}})\circ (\omega_{k,k'}\otimes \omega^{k,k'})\)
        yields a sum over cuts that fall into two distinct cases, depending on the second cut's coordinates $l,\,l'$ inside of $\omega_{k,k'}$:
		\begin{itemize}
			\item[1)] $k<l<l'<k'$, that is $\omega=w_0\cdots w_k\cdots w_{l}\cdots w_{l'}\cdots w_{k'}\cdots w_\ell$, which gets cut as $\omega_{k,k'}\otimes \omega^{k,k'}_{l,l'}\otimes \omega^{l,l'}$,
			\item[2)] $k<l<l'=k'$, that is $\omega=w_1\cdots w_k\cdots w_{l}\cdots w_{l'=k'}\cdots w_m$, which gets cut as $\omega_{k,k'}\otimes \omega^{k,k'}_{l,k'}\otimes \omega^{l,k'}$.
		\end{itemize}
		Here we do not need to consider the case $k=l<l'\leq k'$. Indeed, either $k=l<l'= k'$ then $\omega^{l,l'}=\omega^{k,k'}$ meaning we cut $\omega^{k,k'}$ out of itself, which is not admissible; or $k=l<l'< k'$ but then, $\omega^{l,l'}$ is not admissible because $w_{l'}=w_{k'}$ is visited again after step $l'$. Rather, in that situation it is $\omega^{l',k'}$ that is admissible and falls into case 2) above. 
		Thus,
		\begin{align*}
			&(\Id\otimes \Delta_{\text{CP}})\circ \Delta_{\text{CP}}(\omega)=\sum_{\substack{c\in \text{AdC}(\omega)\\ \omega^{k,k'}\notin\text{SAP}(\omega)}}\sum_{c'\in \text{AdC}(\omega^{k,k'})}\begin{cases}
				\omega_{k,k'}\otimes \omega^{k,k'}_{l,l'}\otimes \omega^{l,l'}&k<l<l'<k',\\
				\omega_{k,k'}\otimes \omega^{k,k'}_{l,k'}\otimes \omega^{l,k'}&k<l<l'=k',                                              
			\end{cases}
		\end{align*}
        where we used $c:=\omega^{k,k'}$ and $c':=\omega^{l,l'}$ to alleviate the notation.
		By Case~\ref{CasImbrique} of Proposition~\ref{LemmaCut}, gathering everything, we obtain
		\[(\Delta_{\text{CP}}\otimes \Id-\Id\otimes \Delta_{\text{CP}})\circ \Delta_{\text{CP}}(\omega)=\sum_{\substack{c\in \text{AdC}(\omega)\\ \omega_{k,k'}\notin\text{SAW}(\G)\\ \omega_{k,k'}\notin\text{SAP}(\G)}}\sum_{\substack{c'\in \text{AdC}(\omega_{k,k'})\\l<l'<k<k'\\k<k'<l<l'}}
		\omega_{k,k';l,l'}\otimes \omega^{l,l'}\otimes \omega^{k,k'}.\]
  Remark how $k,k'$ and $l,l'$ now play completely symmetric roles in the above so that the co-prelie relation holds for all walks $\omega\in\mathcal{W}(G)$,
		\begin{align*}
			&(\Delta_{\text{CP}}\otimes \Id-\Id\otimes \Delta_{\text{CP}})\circ \Delta_{\text{CP}}(\omega)=(\Id\otimes \tau)\circ\left( \Delta_{\text{CP}}\otimes \Id-\Id\otimes \Delta_{\text{CP}}\right)\circ \Delta_{\text{CP}}(\omega).
		\end{align*}
	This indicates, perhaps suprisingly, that Lawler's intuitive chronological removal of the simple cycles from walks naturally endows their vector space with a sophisticated co-preLie structure. 
	\end{proof}
 \begin{example}\label{CopreLieEx}
     Consider again the walk $\omega=1233234441$ of Example~\ref{DeltaOmega}. Then,
\begin{align*}
 \big(\Delta_{\text{CP}}\otimes \text{Id}\big)\circ\Delta_{\text{CP}}(1233234441)=&~12332341\otimes 44\otimes 44 +12323441\otimes 33\otimes 44+123441\otimes 2332\otimes 44\\
 &+ 1232341\otimes 33 \otimes 444 +12341 \otimes 2332 \otimes 444\\
 &+12323441\otimes 44 \otimes 33 +1232341\otimes 444 \otimes 33 +1234441\otimes 232 \otimes 33\\
 &+ 123441\otimes 44\otimes 2332+12341\otimes 444\otimes 2332,\\
 \big(\text{Id}\otimes \Delta_{\text{CP}}\big)\circ\Delta_{\text{CP}}(1233234441)=&~12332341 \otimes 44\otimes 44  + 1234441\otimes 232\otimes 33.
\end{align*}
From this, reordering the terms for convenience, we obtain
 \begin{align*}
 \big(\Delta_{\text{CP}}\otimes \text{Id}-\text{Id}\otimes \Delta_{\text{CP}}\big)\circ\Delta_{\text{CP}}(1233234441)=&~12323441\otimes 33\otimes 44+12323441\otimes 44 \otimes 33\\ 
 &+ 123441\otimes 2332\otimes 44+ 123441\otimes 44\otimes 2332\\
 &+ 1232341\otimes 33 \otimes 444 ++1232341\otimes 444 \otimes 33\\
 &+12341 \otimes 2332 \otimes 444+12341\otimes 444\otimes 2332,
\end{align*}
which is invariant under the action of $\text{Id}\otimes \tau$ as dictated by Theorem~\ref{DeltaCPTHM}.
\end{example}

	\section{Hopf structures on the tensor and symmetric algebras of walks}\label{HopfTensor}

	As in the case of trees due to Connes and Kreimer in \cite{Connes1999} or the case of decorated trees due to Foissy \cite{Foissy2002a}, for any walk, we can extend the notion of admissible cut to allow for multiple simultaneous cuts, which we call \textit{extended} admissible cuts. Thanks to this construction, a dual of Oudon and Guin's own \cite{Oudom2008}, we get a co-product compatible with the tensor and symmetric algebra structures generated by walks on \(\G\). 

\subsection{Extended admissible cuts}
 We begin by defining the notion of extended admissible cuts of a walk, then show that they turn the tensor and symmetric algebras of walks into Hopf algebras. Finally, we present three special families of walks, ladders, corollas and cacti, and explain how we can make them into Hopf algebras.
	\begin{definition}
		Let $\G$ be a finite connected non-empty graph and $\K$ a field of characteristic $0$. 
		\begin{enumerate}
			\item We define $\Tens{\WGamma{W}{\G}}$ as the tensor algebra generated by $\WGamma{W}{\G}$. To alleviate the notation, for walks $\omega_1, \dots, \omega_p\in\mathcal{W}(G)$, the tensor \(\omega_1\otimes\dots\otimes\omega_p\) will be denoted by \(\omega_1\,|\,\dots\,|\,\omega_p\). Such elements of $\Tens{\WGamma{W}{\G}}$ are called forests. 
			\item Let $\omega=w_0\dots w_\ell$ be a walk in $\G$. In keeping we common terminology for Hopf algebras  we call \textit{degree} $\deg(\omega)$ the length of the walk $\omega$.
		\end{enumerate}
	\end{definition}
	
	We recall that the tensor algebra $\Tens{\WGamma{W}{\G}}$ is equipped with the concatenation product $\bigcdot$,
	\[\bigcdot :\left\{\begin{array}{rcl}\Tens{\WGamma{W}{\G}} \otimes \Tens{\WGamma{W}{\G}} &\longrightarrow &\Tens{\WGamma{W}{\G}} \\
		\omega_1\,|\,\dots\,|\,\omega_m\otimes \omega'_1\,|\,\dots\,|\,\omega'_n &\longmapsto& \omega_1\,|\,\dots\,|\,\omega_m\,|\,\omega'_1\,|\,\dots\,|\,\omega'_n,\end{array}\right.\]
	and the degree $\deg(\omega_1|\dots|\omega_m)=\displaystyle\sum_{i=1}^{m}\deg(\omega_i)$ is the sum of the involved walks' degrees. By construction, $(\Tens{\WGamma{W}{\G}},\bigcdot)$ is an unital associative algebra with unit the empty forest $()$, identified with $\mathbf{1}\in\K$, written in bold font so as to distinguish it from a vertex label `1'.\\[-.5em]

	

	\begin{definition}[Extended admissible cut]\label{EAdCDef} Let $\G$ be a digraph and $\omega\in\mathcal{W}(G)$. An {\it extended admissible cut} of $\omega$ is the tensor product of $n\in\mathbb{N}\backslash\{0\}$ consecutive admissible cuts $\omega^{k_i,k'_i}\in\text{AdC}(\omega)$ which are \emph{non-overlapping} in $\omega$, that is $k_1<k_1'<k_2<k_2'<\dots <k_n<k_n'$. We write,
 $$
 \omega^{k_1,k_1';\dots; k_n,k_n'}:=\omega^{k_1,k_1'}\,|\,\dots\,|\,\omega^{k_n,k_n'}\in \Tens{\WGamma{W}{\G}}. 
 $$
The set of extended admissible cuts of $\omega$ is denoted $E\text{AdC}(\omega)$. Observe that $\text{AdC}(\omega)\subset E\text{AdC}(\omega)$.
	\end{definition}

To alleviate the notation whenever possible we designate an extended admissible cut by a single letter, e.g. $\omega^c\in E\text{AdC}(\omega)$ and might then simply write that $c$ is an extended admissible cut of $\omega$.

 \begin{example}\label{EAdCExample}Consider the walk $\omega=123324441$, then \[E\text{AdC}(\omega)=\{33,44,444,2332, 33\,|\,44, 33\,|\,444,2332\,|\,44,2332\,|\,444\}.\] 
	\end{example}

\begin{remark}
The notion of extended admissible cut generalizes straightforwardly from $\mathcal{W}(G)$ to $\Tens{\WGamma{W}{\G}}$. Consider  $\omega_1\,|\,\dots\,|\,\omega_m\in \Tens{\WGamma{W}{\G}}$ with $\omega_i\in \mathcal{W}(G)$, then an extended admissible cut of this is an element $\omega^{c_i}\,|\,\dots\,|\,\omega^{c_m}\in\Tens{\WGamma{W}{\G}}$
 such that all $\omega^{c_i}\in E\text{AdC}(\omega_i)$.
 \end{remark}
 
 As stated Definition~\ref{EAdCDef} the admissible cuts constituting an extended admissible cut $\omega^c$ of a walk $\omega=w_0\cdots w_\ell$ are non-overlapping, i.e. $\omega^c:=\omega^{k_1,k_1';\dots; k_n,k_n'}\in E\text{AdC}(\omega)$ satisfies
   \(0\leq k_1<k_1'<k_2<k_2'<\dots <k_n<k_n' \leq \ell.\)
  We can therefore meaningfully denote
   \[\omega_c:=\omega_{k_1,k_1';\dots;k_n,k_n'}=w_0\dots w_{k_1} w_{k_1' +1}\dots w_{k_2} w_{k_2' +1} \dots w_{k_n} w_{k_n' +1}\dots w_\ell,\] 
   for what remains of $\omega$ after erasure of all $\omega^{k_i,k_i'}$. Since admissible cuts are closed subwalks of a walk, $\omega_{c}$ is still a walk. Together with the non-overlapping condition this implies that, for any $1\leq i\leq n$, 
   \begin{equation}\label{EAdC2AdC}
   \omega^{k_1,k_1';\dots; k_n,k_n'}\in E\text{AdC}(\omega)\Rightarrow \omega^{k_i,k_i'}\in \text{AdC}(\omega_{k_1,k_1';\dots;k_{i-1},k_{i-1}';k_{i+1},k_{i+1}';\dots; k_n,k_n'}).
   \end{equation}
    Extended admissible cuts are `well behaved' in the sense that such cuts and their remainders satisfy an analog of Proposition~\ref{LemmaCut} for admissible cuts: 

 \begin{proposition}
     \label{LemmaHopf} Let $\G$ a digraph, $\omega\in\mathcal{W}(G)$, $\omega^c\in E\mathrm{AdC}(\omega)$. Then,
$$
   \omega^{c'}\in\mathrm{AdC}(\omega_{c})\Rightarrow \omega^{c'}\in\mathrm{AdC}(\omega)
   $$
which also implies $\omega^{c'}\in E\mathrm{AdC}(\omega_{c})\Rightarrow \omega^{c'}\in E\mathrm{AdC}(\omega)$. Furthermore,
$$
   \omega^{c'}\in E\mathrm{AdC}(\omega^{c})\Rightarrow \omega^{c'}\in E\mathrm{AdC}(\omega)
   $$
   	\end{proposition}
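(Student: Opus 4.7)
The plan is to reduce each of the three implications to Proposition~\ref{LemmaCut} by decomposing extended admissible cuts into their single-admissible-cut components. The central tool is observation~(\ref{EAdC2AdC}), which says that every constituent $\omega^{k_i,k_i'}$ of an extended admissible cut $\omega^c$ is itself an admissible cut of the walk obtained from $\omega$ by removing the other constituents. This allows us to peel the cuts off $\omega^c$ one at a time, each peeling step being controlled by Proposition~\ref{LemmaCut}.

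For the first implication I would argue by induction on the number $n$ of components of $\omega^c=\omega^{k_1,k_1'}\,|\,\cdots\,|\,\omega^{k_n,k_n'}$. The base case $n=1$ is exactly Case~\ref{CasMickey} of Proposition~\ref{LemmaCut}: the cuts $\omega^{k_1,k_1'}$ and $\omega^{c'}$ are admissible in their respective walks and are non-overlapping in $\omega$, hence $\omega^{c'}\in \text{AdC}(\omega)$. For the inductive step, write $\omega^c=\omega^{c''}\,|\,\omega^{k_n,k_n'}$, so that $\omega_c=(\omega_{c''})_{k_n,k_n'}$ after suitable reindexing. By~(\ref{EAdC2AdC}) the component $\omega^{k_n,k_n'}$ is an admissible cut of $\omega_{c''}$; since it does not overlap with $\omega^{c'}$, Case~\ref{CasMickey} of Proposition~\ref{LemmaCut}, applied inside $\omega_{c''}$, lifts $\omega^{c'}$ to an admissible cut of $\omega_{c''}$. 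The inductive hypothesis then promotes it further to an admissible cut of $\omega$.

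The second implication follows immediately from the first: an extended admissible cut $\omega^{c'}=\omega^{l_1,l_1'}\,|\,\cdots\,|\,\omega^{l_p,l_p'}$ of $\omega_c$ is a tensor of non-overlapping admissible cuts of $\omega_c$, and the first implication upgrades each $\omega^{l_j,l_j'}$ to an admissible cut of $\omega$. Non-overlapping is preserved because passing from $\omega$ to $\omega_c$ only contracts the walk and therefore preserves the relative linear order of the surviving indices. For the third implication, I would use the remark preceding the proposition to decompose $\omega^{c'}$ as $\omega^{c_1'}\,|\,\cdots\,|\,\omega^{c_n'}$ with $\omega^{c_i'}\in E\text{AdC}(\omega^{k_i,k_i'})$, and further expand each $\omega^{c_i'}$ into its constituent admissible cuts of $\omega^{k_i,k_i'}$. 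Case~\ref{CasImbrique} of Proposition~\ref{LemmaCut} then turns each such cut into an admissible cut of $\omega$, and the non-overlapping property in $\omega$ is automatic since cuts coming from different $\omega^{k_i,k_i'}$ live in the disjoint intervals $[k_i,k_i']$ of $\omega$, while those coming from the same $\omega^{k_i,k_i'}$ were already non-overlapping there.

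The main obstacle is essentially bureaucratic: carefully tracking how absolute indices in $\omega$ translate under successive removals of sections. Although Proposition~\ref{LemmaCut} is stated for cuts indexed within a single walk, its use in the inductive step of the first implication requires one to reindex $\omega^{c'}$ inside $\omega_{c''}$ and to verify that the non-overlapping hypothesis of Case~\ref{CasMickey} is preserved under this translation. I do not expect a deeper conceptual difficulty beyond this index bookkeeping.
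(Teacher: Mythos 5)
Your proposal is correct and follows essentially the same route as the paper: both arguments peel the extended cut $\omega^c$ into its constituent admissible cuts one at a time via observation~(\ref{EAdC2AdC}), apply Case~\ref{CasMickey} of Proposition~\ref{LemmaCut} iteratively (the paper even notes the peeling order is immaterial, so your reverse-order induction is the same argument), and handle the third implication with Case~\ref{CasImbrique} plus the non-overlapping decomposition. The index bookkeeping you flag is indeed the only real work, and the paper treats it with the same level of informality.
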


	\begin{proof}
 Let  $\omega^{c}:=\omega^{k_1,k_1';\dots ;k_n,k_{n}'}$ be an extended admissible cut of $\omega$. By virtue of Proposition~\ref{LemmaCut}, an admissible cut of the remainder of an admissible cut of a walk  is an admissible cut of that walk, 
 \[\omega^{k,k'}\in\text{AdC}(\omega),~ \omega^{l,l'}\in\text{AdC}(\omega_{k,k'})\Rightarrow \omega^{l,l'}\in\text{AdC}(\omega).\] 
  In addition $\omega_{k_2,k_2';\dots;k_n,k_n'}^{k_1,k_1'}\in \text{AdC}(\omega_{k_2,k_2';\dots;k_n,k_n'})$ by virtue of the fact that extended admissible cuts only comprise non-overlapping admissible cuts. Therefore we get
  \[\omega^{l,l'}\in\text{AdC}(\omega_{k_1,k_1';\dots;k_n,k_n'})\Rightarrow \omega^{l,l'}\in\text{AdC}(\omega_{k_2,k_2';\dots;k_n,k_n'}).\]
 Iterating this observation leads to the first claim for $\omega^{c'}:=\omega^{l,l'}$. Note that since all cuts are non-overlapping we could have chosen to remove the $k_i,k_i'$ cuts in any order in the iteration. The result for extended admissible cuts is now immediate since such cuts comprise only non-overlapping admissible cuts to each of which  we apply the result just proven.

For the second claim, observe that by Case~\ref{CasImbrique} of Proposition~\ref{LemmaCut}, $\omega^c\in\text{AdC}(\omega)$ and $\omega^{c'}\in \text{AdC}(\omega^c)$ implies $\omega^{c'}\in \text{AdC}(\omega)$. The result for extended admissible cuts follows once more from the observation that such cuts comprise only non-overlapping admissible cuts, each of which behaves as dictated by Case~\ref{CasImbrique} of Proposition~\ref{LemmaCut}.
\end{proof}

\subsection{Hopf algebra on walks}\label{HopfTensor}
 We may now define a co-product on $\Tens{\WGamma{W}{\G}}$ by relying on extended admissible cuts and their remainders:
 	\begin{definition}[Extended co-product]\label{DefHopfCoproduct} Let $\G$ be a digraph. Consider the morphism of algebras $\Delta_{\text{H}}$ defined by:
		\[\Delta_{\text{H}} :\left\{\begin{array}{rcl} \Tens{\WGamma{W}{\G}} &\longrightarrow &\Tens{\WGamma{W}{\G}}\otimes \Tens{\WGamma{W}{\G}}\\
			\omega &\longmapsto& \displaystyle\Delta_{\text{H}}(\omega)=\mathbf{1}\otimes\omega+\omega\otimes\mathbf{1}+\displaystyle\sum_{c\in E\text{AdC}(\omega)} \omega_{c} \otimes \omega^{c},
		\end{array}\right.\]
		where the sum runs over all extended admissible cuts $\omega^c$ of $\omega$.
	\end{definition}

	\begin{theorem}\label{HopfT} Let $\G$ a digraph and consider the triple $\mathcal{H}_{\mathcal{T}}:=(\Tens{\WGamma{W}{\G}},\bigcdot,\Delta_{\emph{\text{H}}})$. Equipped with the map $\deg$, it defines a graded connected Hopf algebra.
	\end{theorem}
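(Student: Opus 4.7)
The aim is to verify that $\mathcal{H}_{\mathcal{T}}$, equipped with the counit $\varepsilon$ defined by $\varepsilon(\mathbf{1})=1$ and $\varepsilon=0$ on positive-degree forests, is a graded connected bialgebra. The antipode then exists automatically by the standard recursion on degree available in any graded connected bialgebra, turning $\mathcal{H}_{\mathcal{T}}$ into a Hopf algebra. The algebra $(\Tens{\WGamma{W}{\G}},\bigcdot,\mathbf{1})$ is by construction the free associative unital tensor algebra, so associativity and unitality are automatic; setting $\Delta_{\text{H}}(\mathbf{1}):=\mathbf{1}\otimes\mathbf{1}$ and extending multiplicatively gives a well-defined algebra morphism, so bialgebra compatibility between product and coproduct holds by construction once the coassociativity of $\Delta_{\text{H}}$ on walks is proven.

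I would first extend $\deg$ to forests by $\deg(\omega_1|\cdots|\omega_m)=\sum_i\deg(\omega_i)$ and observe that for any $c=(k_1,k_1';\dots;k_n,k_n')\in E\text{AdC}(\omega)$ the arcs of $\omega$ are partitioned between $\omega_c$ and $\omega^c$, so $\deg(\omega_c)+\deg(\omega^c)=\deg(\omega)$ and $\Delta_{\text{H}}$ is homogeneous of degree $0$. Connectedness then follows from the fact that the degree-$0$ component of $\Tens{\WGamma{W}{\G}}$ reduces to $\K\mathbf{1}$ (under the working convention that trivial walks are either identified with $\mathbf{1}$ or excluded from the generating space). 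The counit axiom is checked directly on a walk $\omega$:
\[
(\varepsilon\otimes\text{Id})\circ\Delta_{\text{H}}(\omega)=\varepsilon(\mathbf{1})\omega+\varepsilon(\omega)\mathbf{1}+\sum_{c\in E\text{AdC}(\omega)} \varepsilon(\omega_c)\,\omega^c=\omega,
\]
since every $\omega_c$ is a non-empty walk distinct from $\mathbf{1}$, hence annihilated by $\varepsilon$; the right-handed identity is symmetric.

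The central task is coassociativity. Since $\Delta_{\text{H}}$ is an algebra morphism, it suffices to prove $(\Delta_{\text{H}}\otimes\text{Id})\circ\Delta_{\text{H}}(\omega)=(\text{Id}\otimes\Delta_{\text{H}})\circ\Delta_{\text{H}}(\omega)$ for a single walk $\omega$. Expanding, the interior of the left-hand side is indexed by pairs $(c_1,c_2)$ with $c_1\in E\text{AdC}(\omega)$ and $c_2\in E\text{AdC}(\omega_{c_1})$, producing terms $(\omega_{c_1})_{c_2}\otimes(\omega_{c_1})^{c_2}\otimes\omega^{c_1}$; the right-hand side is indexed by $(c_1,c_2)$ with $c_1\in E\text{AdC}(\omega)$ and $c_2\in E\text{AdC}(\omega^{c_1})$, producing $\omega_{c_1}\otimes(\omega^{c_1})_{c_2}\otimes(\omega^{c_1})^{c_2}$. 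Proposition~\ref{LemmaHopf} is the bridge: in both situations, each elementary admissible cut composing $c_2$ is itself an admissible cut of $\omega$. The strategy is to re-index both sides by a single extended admissible cut of $\omega$ obtained from the union of the elementary intervals involved, together with a two-level partition specifying which are removed first. The two ways a pair of admissible sub-intervals of $\omega$ can coexist — disjoint (Case~\ref{CasMickey} of Proposition~\ref{LemmaCut}) or nested (Case~\ref{CasImbrique}) — are mutually exclusive and exhaustive by the non-straddling Lemma~\ref{PropLEWLES}; the disjoint configurations are exactly those contributed by the LHS (the ``later'' cut sitting in the remainder) and the nested configurations are exactly those contributed by the RHS (the ``deeper'' cut sitting inside the outer one), and Proposition~\ref{LemmaHopf} guarantees the bijection with the data of a pair of extended admissible cuts of $\omega$. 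Checking that the three tensor factors in each configuration coincide on both sides then closes the argument.

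The principal obstacle will be the combinatorial bookkeeping: organizing the double sums into the disjoint and nested strata; keeping track of the boundary contributions from the $\mathbf{1}\otimes\omega+\omega\otimes\mathbf{1}$ summands of $\Delta_{\text{H}}$, which must align on both sides to cancel out the $\mathbf{1}\otimes(\cdot)\otimes(\cdot)$, $(\cdot)\otimes\mathbf{1}\otimes(\cdot)$ and $(\cdot)\otimes(\cdot)\otimes\mathbf{1}$ pieces; and handling the tangent case where the endpoints of two admissible cuts coincide, which falls under Case~\ref{CasImbrique} of Proposition~\ref{LemmaCut}. Once coassociativity is secured, graded connected bialgebra structure is in hand, and the antipode is obtained by the familiar degree-recursion $S(\mathbf{1})=\mathbf{1}$, $S(x)=-x-\sum_{(x)} S(x')\bigcdot x''$ on the reduced coproduct, yielding the Hopf algebra claimed in the theorem.
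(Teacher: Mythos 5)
Your proposal is correct and follows essentially the same route as the paper: reduce to coassociativity on a single walk via the algebra-morphism property, match the boundary terms involving $\mathbf{1}$, and prove the core identity by re-indexing the disjoint-pair terms of $(\Delta_{\text{H}}\otimes\Id)\circ\Delta_{\text{H}}$ against the nested-pair terms of $(\Id\otimes\Delta_{\text{H}})\circ\Delta_{\text{H}}$ through the union $c\cup c'$ and relative complement $c\setminus c'$ of extended admissible cuts, with Proposition~\ref{LemmaHopf} supplying the needed admissibility transfers. Your explicit treatment of the counit, the gradation, and the caveat about length-$0$ walks in the degree-$0$ component is slightly more careful than the paper's, which dispatches these points as ``direct calculation.''
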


 
	\begin{proof}
		Observe first that $\deg$ is a graduation by direct calculation. Second, to prove the theorem we must establish that $\Delta_{\text{H}}$ is coassociative. Since $\Delta_{\text{H}}$ is a morphism of algebras it is sufficient to show that for any walk $\omega\in\mathcal{W}(\G)$,
  \[(\Delta_{\text{H}}\otimes \Id)\circ \Delta_{\text{H}}(\omega)=(\Id\otimes\Delta_{\text{H}})\circ \Delta_{\text{H}}(\omega).\]
		Let $\omega$ be a walk in $\G$. Then
		\begin{align*}
			(\Delta_{\text{H}}\otimes \Id)\circ \Delta_{\text{H}}(\omega)&=\omega\otimes \mathbf{1}\otimes \mathbf{1}+\mathbf{1}\otimes \omega\otimes \mathbf{1}+\mathbf{1}\otimes \mathbf{1}\otimes \omega+\sum_{c\in E\text{AdC}(\omega)}\omega_c\otimes \omega^c\otimes \mathbf{1} \\
			&\hspace{-5mm}+\sum_{c\in E\text{AdC}(\omega)}\omega_c\otimes \mathbf{1}\otimes \omega^c  +\sum_{c\in E\text{AdC}(\omega)}\mathbf{1}\otimes \omega_c\otimes \omega^c+\sum_{c\in E\text{AdC}(\omega)}\sum_{c'\in E\text{AdC}(\omega_c)}(\omega_{c})_{c'}\otimes (\omega_c)^{c'}\otimes \omega^{c},
		\end{align*}
		Similarly, 
		\begin{align*}
			(\Id\otimes\Delta_{\text{H}})\circ \Delta_{\text{H}}(\omega)&=\omega\otimes \mathbf{1}\otimes \mathbf{1}+\mathbf{1}\otimes \omega\otimes \mathbf{1}+\mathbf{1}\otimes \mathbf{1}\otimes \omega+\sum_{c\in E\text{AdC}(\omega)}\mathbf{1}\otimes \omega_c\otimes \omega^c\\
   &\hspace{-5mm}+\sum_{c\in E\text{AdC}(\omega)}\omega_c\otimes \mathbf{1}\otimes \omega^c +\sum_{c\in E\text{AdC}(\omega)}\omega_c\otimes \omega^c\otimes \mathbf{1}+\sum_{c\in E\text{AdC}(\omega)}\sum_{c'\in E\text{AdC}(\omega^c)}\omega_{c}\otimes (\omega^{c})_{c'}\otimes (\omega^{c})^{c'},
		\end{align*} 
		So the theorem follows if we prove that 
  \begin{equation}\label{HopfProofEq}
  \sum_{c\in E\text{AdC}(\omega)}\sum_{c'\in E\text{AdC}(\omega_c)}(\omega_{c})_{c'}\otimes (\omega_c)^{c'}\otimes \omega^{c}=\sum_{c\in E\text{AdC}(\omega)}\sum_{c'\in E\text{AdC}(\omega^c)}\omega_{c}\otimes (\omega^{c})_{c'}\otimes (\omega^{c})^{c'}.
  \end{equation}
 Consider first terms from the left-hand side of the above, i.e. of the form
  \begin{equation}\label{LeftTerm}
  (\omega_{c})_{c'}\otimes (\omega_c)^{c'}\otimes \omega^{c}
  \end{equation}
with $c\in E\text{AdC}(\omega)$ and $c'\in E\text{AdC}(\omega_c)$.
Since $c'\in E\text{AdC}(\omega_c)$ and $c\in E\text{AdC}(\omega)$ then $c'\in E\text{AdC}(\omega)$ by the first result of Proposition~\ref{LemmaHopf}. Furthermore $c'\in E\text{AdC}(\omega_c)$ implies that cuts $c$ and $c'$ are vertex-disjoint since $c'$ is cut-out of the remainder of $c$. Then $k:=c\cup c'$ is an extended admissible cut of $\omega$ and, by construction of $k$, $c$ is an extended admissible cut of $k$. Hence any term of the form given by Eq.~(\ref{LeftTerm}) is also of the form 
\begin{equation*}
  \omega_{k}\otimes (\omega^k)_{c}\otimes \omega^{c}
  \end{equation*}
with $\omega^k\in E\text{AdC}(\omega)$ and $\omega^c\in E\text{AdC}(\omega^k)$. This implies that the LHS of Eq.~(\ref{HopfProofEq}) is comprised in its RHS. Observe that this result is not true for admissible cuts, indeed we used that $k:=c\cup c'$ is the union of two non-overlapping cuts and so while $k\in E\text{AdC}(\omega)$, we have $k\notin \text{AdC}(\omega)$. This explains why $\Delta_{\text{CP}}$ fails to be coassociative. 

Second, consider terms from the RHS of Eq.~(\ref{HopfProofEq}),
  \begin{equation}\label{RightTerm}
  \omega_{c}\otimes (\omega^{c})_{c'}\otimes (\omega^{c})^{c'},
  \end{equation}
  with $c\in E\text{AdC}(\omega)$ and $c'\in E\text{AdC}(\omega^c)$. Since $c\in E\text{AdC}(\omega)$ and $c'\in E\text{AdC}(\omega^c)$ then $c'\in EAdC(\omega)$ by the second result of Proposition~\ref{LemmaHopf}. 
    Since $c'\in E\text{AdC}(\omega^c)$, $c'$ is entirely included within cut $c$ and we can define $l:=c\backslash c'$, $c=l\cup c'$ to be the extended admissible cut $\omega^l\in E\text{AdC}(\omega_{c'})$ which cuts out $c$ from the remainder $\omega_{c'}$. By construction $(\omega_{c'})^l=(\omega^c)_{c'}$ and $\omega_l=\omega_{c,c'}$. Consequently, any term of the form given by Eq.~(\ref{RightTerm}) is also of the form 
$$
(\omega_{c'})_l\otimes (\omega_{c'})^l\otimes \omega^{c'}
$$
with $\omega^{c'}\in E\text{AdC}(\omega)$ and $\omega^l\in E\text{AdC}(\omega_{c'})$. This implies that the RHS of Eq.~(\ref{HopfProofEq}) is comprised in its LHS. Remark that this statement is still true had we allowed only for admissible cuts. This is because if $c'\in \text{AdC}(\omega)$ and $c\in \text{AdC}(\omega_{c'})$ then $l:=c\backslash c'$ is an admissible cut of $\text{AdC}(\omega_{c'})$ by Proposition~\ref{LemmaCut}. This indicates that all terms generated by $\big(\text{Id}\otimes \Delta_{\text{CP}}\big)\circ\Delta_{\text{CP}}$ can be found in those generated by $\big( \Delta_{\text{CP}}\otimes \text{Id}\big)\circ\Delta_{\text{CP}}$, see e.g. Example~\ref{CopreLieEx}.~\\[-.5em]

The equality of Eq.~(\ref{HopfProofEq}) is proven and $\Delta_{\text{H}}$ is coassociative.
\end{proof}

 \begin{example}
    Let $\omega=1233234441$ be the walk of Examples~\ref{DeltaOmega}, \ref{CopreLieEx} and \ref{EAdCExample}. Then
 \begin{align*}
 \Delta_{\text{H}}(\omega)&=\mathbf{1}\otimes \omega + \omega \otimes \mathbf{1} +123323441\otimes 44 + 12332341 \otimes 444 + 123234441 \otimes 33 + 1234441\otimes 2332\\
&~+12323441\otimes 33\,|\,44+ 1232341\otimes 33\,|\,444+ 123441\otimes 2332\,|\,44+ 12341\otimes 2332\,|\,444.
 \end{align*}
 Omitting all terms involving $\mathbf{1}$ for the sake of concision and because they trivially satisfy the theorem as shown in its proof, we have
 \begin{align*}
    &(\Delta_{\text{H}}\otimes \Id)\circ \Delta_{\text{H}}(\omega) =\\
    &\hspace{7mm}12332341\otimes 44\otimes 44+12323441\otimes 33\otimes 44+123441\otimes 2332\otimes 44+1232341\otimes 33\,|\,44\otimes 44\\
    &\hspace{15mm}+12341\otimes 2332\,|\,44\otimes 44\\
    &\hspace{5mm}+1232341\otimes 33 \otimes 444 + 12341 \otimes 2332 \otimes 444\\
    &\hspace{5mm}+12323441\otimes 44 \otimes 33 +1232341\otimes 444 \otimes 33 +1234441 \otimes 232\otimes 33 +123441\otimes 232\,|\,44 \otimes 33\\
    &\hspace{15mm} +12341\otimes 232\,|\,444 \otimes 33\\
    &\hspace{5mm}+12341\otimes 444\otimes 2332+123441\otimes 44\otimes 2332\\
&\hspace{5mm}+12341\otimes 232\,|\,44\otimes 33\,|\,44+123441\otimes 232\otimes 33\,|\,44+1232341\otimes 44\otimes 33\,|\,44\\ 
&\hspace{5mm}+ 12341\otimes 232\otimes 33\,|\,444\\
&\hspace{5mm}+ 12341\otimes 44\otimes 2332\,|\,44.
 \end{align*}
 The presentation has been organized for the sake of readability: each line above represents terms steming from the same term found in $\Delta_{\mathrm{H}}(\omega)$, while an additional indentation denotes a continuing line. Similarly,
 \begin{align*}
&(\Id \otimes \Delta_{\text{H}})\circ \Delta_{\text{H}}(\omega) =12332341\otimes 44\otimes 44\\
&\hspace{5mm}+ 1234441\otimes   232\otimes 33 \\ 
&\hspace{5mm}+ 12323441\otimes 44\otimes 33 + 12323441\otimes 33\otimes 44\\
&\hspace{5mm}+1232341\otimes 33 \otimes 444
+1232341\otimes 444\otimes 33
+1232341\otimes 33\,|\,44\otimes 44 
+1232341\otimes 44\otimes 33\,|\,44 \\
&\hspace{5mm}+123441\otimes 2332\otimes 44 
+123441\otimes 232\,|\,44\otimes 33
+123441\otimes 232\otimes 33\,|\,44
+123441\otimes 44\otimes 2332\\
&\hspace{5mm}+12341\otimes 232\,|\,444\otimes 33
+12341\otimes 2332\,|\,44\otimes 44
+12341\otimes 232\,|\,44\otimes 33\,|\,44
+12341\otimes 2332\otimes 444\\
&\hspace{15mm}+12341\otimes 444\otimes 2332
+12341\otimes 232\otimes 33\,|\,444
+12341\otimes2332\,|\,44.
  \end{align*}
  A \emph{close} examination of both results reveals their equality as predicted by Theorem~\ref{HopfT}. 
\end{example}

	\begin{proposition}\label{HopfbiIdeal}
		Let $\G$ be a finite connected non-empty graph. We denote by $\mathcal{I}$ the vector space spanned by the elements $\omega_1|\dots|\omega_n-\omega_{\sigma(1)}|\dots|\omega_{\sigma(n)}$ where $\omega_1|\dots|\omega_n\in\Tens{\WGamma{W}{\G}}$ and $\sigma$ is a permutation. 
		Then, $\mathcal{I}$ is a Hopf bi-ideal of $\Tens{\WGamma{W}{\G}}$.
	\end{proposition}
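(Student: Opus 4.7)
The plan is to identify $\mathcal{I}$ as the kernel of the canonical projection $\pi : \Tens{\WGamma{W}{\G}} \to \Sym{\WGamma{W}{\G}}$ onto the symmetric algebra and to verify the three defining properties (ideal, coideal, compatibility with the antipode) one at a time. Throughout, we use that $\mathcal{I}$ is spanned by homogeneous elements of strictly positive degree, which takes care of the counit condition $\varepsilon(\mathcal{I}) = 0$ for free and will allow us to invoke the graded connected framework for the antipode.

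First I would prove that $\mathcal{I}$ is a two-sided ideal for $\bigcdot$. Given a generator $a = \omega_1|\dots|\omega_n - \omega_{\sigma(1)}|\dots|\omega_{\sigma(n)}$ and any forest $b = \eta_1|\dots|\eta_m$, the concatenation $a\bigcdot b$ is
\[
 \omega_1|\dots|\omega_n|\eta_1|\dots|\eta_m - \omega_{\sigma(1)}|\dots|\omega_{\sigma(n)}|\eta_1|\dots|\eta_m,
\]
which is the difference of two forests that differ by the permutation $\sigma$ extended as the identity on the last $m$ factors; hence $a\bigcdot b \in \mathcal{I}$, and symmetrically $b\bigcdot a \in \mathcal{I}$. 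Equivalently, $\pi$ is an algebra morphism, so $\ker\pi = \mathcal{I}$ is an ideal.

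Next I would tackle the coideal property, which I expect to be the main conceptual step. The goal is
\[
 \Delta_{\text{H}}(\mathcal{I}) \subseteq \mathcal{I}\otimes \Tens{\WGamma{W}{\G}} + \Tens{\WGamma{W}{\G}}\otimes \mathcal{I} = \ker(\pi\otimes \pi).
\]
The elegant way is to observe that $\Sym{\WGamma{W}{\G}}\otimes \Sym{\WGamma{W}{\G}}$ is a \emph{commutative} algebra, while $(\pi\otimes \pi)\circ \Delta_{\text{H}}$ is a composition of algebra morphisms (by Definition~\ref{DefHopfCoproduct} and the discussion of $\pi$ above), hence itself an algebra morphism from the free algebra $\Tens{\WGamma{W}{\G}}$ into a commutative algebra. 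By the universal property of the tensor algebra, any such morphism factors uniquely through the symmetric algebra $\Sym{\WGamma{W}{\G}} = \Tens{\WGamma{W}{\G}}/\mathcal{I}$, which is exactly the statement $(\pi\otimes \pi)\circ \Delta_{\text{H}}(\mathcal{I}) = 0$.

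Finally, for the antipode, I would argue that $\mathcal{I}$ is a homogeneous biideal of the graded connected Hopf algebra $\mathcal{H}_{\mathcal{T}}$, so the quotient $\mathcal{H}_{\mathcal{T}}/\mathcal{I}$ inherits a graded connected bialgebra structure. Any graded connected bialgebra automatically admits a unique antipode, defined recursively by Takeuchi's formula on the augmentation ideal; uniqueness forces the antipode $S$ of $\mathcal{H}_{\mathcal{T}}$ to descend to the quotient, i.e.\ $S(\mathcal{I})\subseteq \mathcal{I}$. Alternatively, one can establish $S(\mathcal{I})\subseteq \mathcal{I}$ directly by induction on the degree using the defining identity $m\circ (S\otimes \Id)\circ \Delta_{\text{H}} = \eta\circ \varepsilon$ and the two previously proved properties. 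Either route shows $\mathcal{I}$ is stable under $S$, completing the proof that $\mathcal{I}$ is a Hopf biideal.
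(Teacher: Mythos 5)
Your proposal is correct, but the central step is argued quite differently from the paper. For the coideal property the paper proceeds by explicit computation: it expands $\Delta_{\text{H}}(\omega_1|\dots|\omega_n-\omega_{\sigma(1)}|\dots|\omega_{\sigma(n)})$ as a double sum over extended admissible cuts $c_i\in E\text{AdC}_+(\omega_i)$, inserts and subtracts the cross term $\sum(\omega_{c_1}|\dots|\omega_{c_n})\otimes(\omega^{c_{\sigma(1)}}|\dots|\omega^{c_{\sigma(n)}})$, and reads off one difference landing in $\Tens{\WGamma{W}{\G}}\otimes\mathcal{I}$ and the other in $\mathcal{I}\otimes\Tens{\WGamma{W}{\G}}$. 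You instead observe that $\mathcal{I}$ is the commutator ideal (each generator telescopes into adjacent-transposition swaps), that $(\pi\otimes\pi)\circ\Delta_{\text{H}}$ is an algebra morphism into the commutative algebra $\Sym{\WGamma{W}{\G}}\otimes\Sym{\WGamma{W}{\G}}$, and invoke the universal property of the abelianization together with $\ker(\pi\otimes\pi)=\mathcal{I}\otimes\Tens{\WGamma{W}{\G}}+\Tens{\WGamma{W}{\G}}\otimes\mathcal{I}$. This is cleaner and makes no use of the specific combinatorics of admissible cuts — it works for any algebra-morphism coproduct on a tensor algebra — whereas the paper's computation, though heavier, stays self-contained and exhibits the decomposition explicitly. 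You are also more careful than the paper on the antipode: the paper's proof stops at the biideal property, while you supply the standard argument that a bialgebra morphism between Hopf algebras intertwines antipodes (or Takeuchi's recursion), so $S(\mathcal{I})\subseteq\mathcal{I}$. One small caveat: your claim that $\mathcal{I}$ is spanned by elements of \emph{strictly positive} degree is not literally true, since trivial (length-$0$) walks are allowed and yield degree-$0$ generators such as $\omega_1|\omega_2-\omega_2|\omega_1$ with both $\omega_i$ trivial; the conclusion $\varepsilon(\mathcal{I})=0$ still holds because $\varepsilon$ annihilates every nonempty forest, but the grading-based justification should be replaced by that direct observation (the same subtlety already affects the paper's assertion that $\mathcal{H}_{\mathcal{T}}$ is connected).
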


	\begin{proof}
		By direct calculation, $\mathcal{I}$ is an ideal. 
		For the sake of brevity we denote by $E\text{AdC}_+(\omega)$ the set $E\text{AdC}(\omega)\cup\{\mathbf{1},\omega\}$, $\omega\in\mathcal{W}(\G)$. Let $\omega_1|\dots|\omega_n\in\Tens{\WGamma{W}{\G}}$ and $\sigma$ be a permutation on $n$ elements. Then,
		\begin{align*}
			&\hspace{-5mm}\Delta_{\text{H}}(\omega_1|\dots|\omega_n-\omega_{\sigma(1)}|\dots|\omega_{\sigma(n)})\\
   =&\displaystyle\sum_{c_i\in E\text{AdC}_+(\omega_i)}(\omega_{c_1}|\dots|\omega_{c_n})\otimes (\omega^{c_1}|\dots|\omega^{c_n})-\displaystyle\sum_{c_i\in E\text{AdC}_+(\omega_i)}(\omega_{c_\sigma(1)}|\dots|\omega_{c_\sigma(n)})\otimes (\omega^{c_\sigma(1)}|\dots|\omega^{c_\sigma(n)})\\
			=&\sum_{c_i\in E\text{AdC}_+(\omega_i)}(\omega_{c_1}|\dots|\omega_{c_n})\otimes (\omega^{c_1}|\dots|\omega^{c_n})-\displaystyle\sum_{c_i\in E\text{AdC}_+(\omega_i)}(\omega_{c_1}|\dots|\omega_{c_n})\otimes (\omega^{c_\sigma(1)}|\dots|\omega^{c_\sigma(n)})\\
			&+\sum_{c_i\in E\text{AdC}_+(\omega_i)}(\omega_{c_1}|\dots|\omega_{c_n})\otimes (\omega^{c_\sigma(1)}|\dots|\omega^{c_\sigma(n)})-\displaystyle\sum_{c_i\in E\text{AdC}_+(\omega_i)}(\omega_{c_\sigma(1)}|\dots|\omega_{c_\sigma(n)})\otimes (\omega^{c_\sigma(1)}|\dots|\omega^{c_\sigma(n)})
		\end{align*}
		This shows that \[\Delta_{\text{H}}(\omega_1|\dots|\omega_n-\omega_{\sigma(1)}|\dots|\omega_{\sigma(n)})\in\Tens{\WGamma{W}{\G}}\otimes \mathcal{I}+\mathcal{I}\otimes \Tens{\WGamma{W}{\G}},\] that is $\mathcal{I}$ is a co-ideal.
	\end{proof}

 Let $\G$ be a digraph. We define 
  \[
  \Sym{\WGamma{W}{\G}}:=\displaystyle\frac{\Tens{\WGamma{W}{\G}}}{\mathcal{I}}.\]
  In the vector space  $\Sym{\WGamma{W}{\G}}$ the concatenation product $\bigcdot$ becomes the disjoint-union product $\Box$. It follows from Theorem~\ref{HopfT} and Proposition~\ref{HopfbiIdeal} that:
  
	\begin{corollary}
		Let $\G$ be a digraph. Then $\mathcal{H}_{\mathcal{S}}:=(\Sym{\WGamma{W}{\G}},\Box,\Delta_{\emph{\text{H}}})$ is a Hopf algebra.
	\end{corollary}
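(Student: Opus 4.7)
The plan is to deduce this corollary from a standard quotient construction: the quotient of a graded connected Hopf algebra by a graded Hopf bi-ideal is again a graded connected Hopf algebra. All the nontrivial work has already been done in Theorem~\ref{HopfT} and Proposition~\ref{HopfbiIdeal}; what remains is a short verification of compatibility.

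First I would observe that $\mathcal{I}$ is a \emph{homogeneous} sub-vector-space of $\Tens{\WGamma{W}{\G}}$: each generator $\omega_1|\dots|\omega_n-\omega_{\sigma(1)}|\dots|\omega_{\sigma(n)}$ is a difference of two tensors of equal length (and identical underlying multiset of walks), hence has a well-defined degree equal to $\deg(\omega_1)+\dots+\deg(\omega_n)$. Therefore $\mathcal{I}=\bigoplus_{n\ge 0}(\mathcal{I}\cap \Tens{\WGamma{W}{\G}}_n)$, and in particular $\mathcal{I}\cap \Tens{\WGamma{W}{\G}}_0=\{0\}$ since the only degree-zero element is the empty forest $\mathbf{1}$, which is fixed by any permutation. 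This ensures both that the quotient inherits a grading and that the degree-zero part of $\Sym{\WGamma{W}{\G}}$ remains $\K\cdot \mathbf{1}$, so $\mathcal{H}_{\mathcal{S}}$ is still \emph{graded connected}.

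Next, since $\mathcal{I}$ is a two-sided ideal for $\bigcdot$ by Proposition~\ref{HopfbiIdeal}, the concatenation product descends to a well-defined associative product on $\Sym{\WGamma{W}{\G}}$; by the very choice of $\mathcal{I}$ this induced product is commutative, which is exactly the disjoint-union product $\Box$, still admitting $\mathbf{1}$ as unit. Similarly, because $\mathcal{I}$ is a coideal, i.e. $\Delta_{\text{H}}(\mathcal{I})\subseteq \Tens{\WGamma{W}{\G}}\otimes \mathcal{I}+\mathcal{I}\otimes \Tens{\WGamma{W}{\G}}$, the coproduct $\Delta_{\text{H}}$ descends unambiguously to a coassociative coproduct on the quotient (coassociativity is preserved because it is an identity in the image). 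The counit on $\Tens{\WGamma{W}{\G}}$, vanishing on positive degrees and equal to $\mathrm{id}$ on $\K\cdot\mathbf{1}$, also vanishes on $\mathcal{I}$ and thus descends. The compatibility axiom $\Delta_{\text{H}}(xy)=\Delta_{\text{H}}(x)\Delta_{\text{H}}(y)$ survives by functoriality of the quotient applied to both sides. Hence $\mathcal{H}_{\mathcal{S}}$ is a graded connected bialgebra.

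Finally, the existence of an antipode is automatic: every graded connected bialgebra admits a unique antipode defined recursively by the convolution identity $S\star\mathrm{Id}=\eta\circ\varepsilon$, the recursion terminating degree by degree because each homogeneous component is finite-dimensional (or at least the recursion is well-defined). I expect no real obstacle in this proof; the only point requiring a moment's attention is the grading of $\mathcal{I}$, and the only genuine content lies behind the scene, in the nontrivial Propositions~\ref{LemmaCut} and \ref{LemmaHopf} that supported Theorem~\ref{HopfT} and Proposition~\ref{HopfbiIdeal}. The corollary itself is thus a clean application of the functoriality of the Hopf-algebra-by-Hopf-bi-ideal quotient.
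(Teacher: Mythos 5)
Your proposal is correct and follows exactly the route the paper takes: the paper offers no separate proof, simply asserting that the corollary ``follows from Theorem~\ref{HopfT} and Proposition~\ref{HopfbiIdeal},'' i.e.\ from the standard fact that the quotient of a graded connected Hopf algebra by a homogeneous Hopf bi-ideal is again a Hopf algebra. Your writeup just makes explicit the routine verifications (homogeneity of $\mathcal{I}$, descent of product, coproduct and counit, existence of the antipode by graded connectedness) that the paper leaves implicit.
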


	\subsection{Antipode}
	The existence of antipode maps in $\mathcal{H}_{\mathcal{T}}$ and $\mathcal{H}_{\mathcal{S}}$ is guaranteed by the fact that these are graded connected bialgebras. In this section we construct the antipodes explicitly, relying on the total order on decorated trees introduced by Foissy \cite{Foissy2002a}, which can be used in the present context thanks to Theorem~\ref{WalkTree}.

	
	\begin{definition}
	Let $\omega$ be a walk, $\text{AdC}(\omega)$ be its set of admissible cuts which we assume to be not empty. Let $1\leq n\leq |\text{AdC}(\omega)|$  be a positive integer, $c_i\in \text{AdC}(\omega)$ a collection of $n$  totally ordered, distinct, non-overlapping admissible cuts of $\omega$ with $c_1\leqslant_{\clk}\dots\leqslant_{\clk}  c_n$. Let $e:=c_1\,|\,\dots\,|\,c_{n}\in E\text{AdC}(\omega)$, we may also conveniently use the notation $|e|:=n$. 
      We associate to $e$ a tensor $T_{e}$ and a disjoint union $S_{e}$ as follows,
	\[
 T_{e}:=\omega_{c_1,\dots, c_n}\otimes (\omega_{c_1,\dots, c_{n-1}})^{c_n}\otimes\cdots \otimes (\omega_{c_1,\dots, c_{i-1}})^{c_i}\otimes \cdots\otimes (\omega_{c_1})^{c_2} \otimes \omega^{c_1},
 \]
	and 		
	\[
 S_{e}:=\omega_{c_1,\dots, c_n}\,\Box\, (\omega_{c_1,\dots, c_{n-1}})^{c_n}\,\Box\,\cdots \,\Box\, (\omega_{c_1,\dots, c_{i-1}})^{c_i}\,\Box\, \cdots\,\Box\, (\omega_{c_1})^{c_2} \,\Box\, \omega^{c_1}.
 \]
\end{definition}

	\begin{example}
		Consider again the walk of Example~\ref{CutOrderEx}, 
  \[\omega=12333222456657=\WalkExampleOrder{}{{1,...,7}}\] 
  and three of its admissible cuts $c_1=\omega^{2,4}$, $c_2=\omega^{3,4}$ and $c_3=\omega^{10,11}$. Since  $\omega^{3,4}\leqslant_{\clk} \omega^{2,4}\leqslant_{\clk} \omega^{10,11}$, for $e:=c_1\,|\, c_2\,|\,c_3\in E\text{AdC}(\omega)$, $|e|=3$,
		\[
  T_{e}=\WalkExampleOrder{cut}{{1,...,7}}\,|\hspace{-5.5mm}\ClosedWalkCorolla{1}{6}{{11}}\hspace{-4mm}|\hspace{-5.5mm}\ClosedWalkCorolla{1}{3}{{3}}\hspace{-4mm}|\hspace{-5.5mm}\ClosedWalkCorolla{1}{3}{{4}}
  \]
	\end{example}
~\\[-.5em]

	\begin{theorem}\label{TheoremAntipode}
		Let $\G$ be a digraph and $\omega\in\mathcal{W}(G)$. Then, in $\Tens{\WGamma{W}{\G}}$, the antipode $S(\omega)$ calculated on $\omega$ is,
		\begin{align*}S(\omega)&=-\omega - \sum_{e\in E\mathrm{AdC}(\omega)} (-1)^{|e|}T_e=-\omega-\sum_{n=1}^{|\mathrm{AdC}(\omega)|}\sum_{\substack{c_1\leqslant_{\clk}\dots \leqslant_{\clk}c_n\\ c_i\in \mathrm{AdC}(\omega)} }(-1)^{n}\,T_{c_1|\dots|c_n}\,
  \end{align*}
	where $|\mathrm{AdC}(\omega)|$ designates the cardinality of $\mathrm{AdC}(\omega)$.
	\end{theorem}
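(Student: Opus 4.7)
The plan is to proceed by induction on the degree $\deg(\omega)$, exploiting that $\mathcal{H}_{\mathcal{T}}$ is graded connected (Theorem~\ref{HopfT}) and hence the antipode is uniquely determined by the recursion
\[
S(\omega) = -\omega - \sum_{c \in E\text{AdC}(\omega)} S(\omega_c) \cdot \omega^c,
\]
obtained from $m \circ (S \otimes \text{Id}) \circ \Delta_{\text{H}}(\omega) = 0$ when $\omega$ has positive degree. The base case treats $\omega \in \text{SAW}(G) \cup \text{SAP}(G)$, for which $E\text{AdC}(\omega) = \emptyset$ and both sides collapse to $-\omega$.

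For the inductive step I would apply the inductive hypothesis and write $-S(\omega_c) = \omega_c + \sum_{e' \in E\text{AdC}(\omega_c)} (-1)^{|e'|} T_{e'}^{(\omega_c)}$, the superscript indicating that the tensor is built relative to $\omega_c$. This recasts $S(\omega) + \omega$ as a double sum over pairs $(c, e')$ where $c \in E\text{AdC}(\omega)$ and $e'$ is either empty or in $E\text{AdC}(\omega_c)$. By Proposition~\ref{LemmaHopf} the admissible cuts forming $c$ and $e'$ are non-overlapping cuts of $\omega$, so their combined collection $d$ is an extended admissible cut of $\omega$ of length $|d| = |c| + |e'|$; conversely, each $d \in E\text{AdC}(\omega)$ together with a bipartition of its cuts into a non-empty \emph{right} block $I$ (whose cuts form $c$, listed in spatial order at the tail of the forest) and a possibly empty \emph{left} block $J$ (whose cuts form $e'$, listed in reverse spatial order just after the remainder) recovers a unique $(c, e')$. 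Comparing with $T_d = \omega_d \,|\, \omega^{d_n} \,|\, \omega^{d_{n-1}} \,|\cdots|\, \omega^{d_1}$ shows that the $(c, e')$-forest coincides with $T_d$ precisely when $|I| = 1$ and $I = \{1\}$, namely $c = d_1$ and $e' = d_2 | \cdots | d_n$, yielding the contribution $(-1)^{n-1} T_d = -(-1)^n T_d$ exactly as needed.

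The main obstacle is then a cancellation argument for all other bipartitions. The sequence of cut-indices of $d$ produced by $(c, e')$ realises a permutation of $\{1, \ldots, n\}$ that is strictly decreasing on $\{1, \ldots, k'\}$ (with $k' := |J|$) and strictly increasing on $\{k'+1, \ldots, n\}$; such \emph{V-shaped} permutations are parametrised by the position $p$ of their minimum value, and exactly two admissible split indices $k' \in \{p-1, p\}$ realise a given such permutation, contributing signs $(-1)^{p-1}$ and $(-1)^p$ which cancel pairwise. The only unpaired case is the entirely decreasing permutation ($p = n$), whose candidate partner $k' = n$ is excluded because it forces $I = \emptyset$ while $c$ must be non-empty; this unpaired case is exactly the surviving $T_d$ identified above. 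The analogous antipode formula in $\mathcal{H}_{\mathcal{S}}$ then follows by passing to the quotient through the Hopf bi-ideal $\mathcal{I}$ of Proposition~\ref{HopfbiIdeal}.
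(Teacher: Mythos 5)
Your overall strategy --- the convolution recursion $S(\omega)=-\omega-\sum_{c}S(\omega_c)\bigcdot\omega^c$, induction, and a sign-reversing pairing on the surviving forests --- is legitimate and genuinely different from the paper's proof (which uses the other convolution identity $\varepsilon=\bigcdot\circ(\Id\otimes S)\circ\Delta_{\mathrm{H}}$, so that $S$ lands on the forest of cuts $\omega^{c}$ and is expanded through the antipode's antimorphism property, with induction on $|\mathrm{AdC}(\omega)|$). There is, however, a genuine gap at the step where you assemble $c$ and $e'$ into a single element $d\in E\mathrm{AdC}(\omega)$. Proposition~\ref{LemmaHopf} only gives $\omega^{c'}\in\mathrm{AdC}(\omega_c)\Rightarrow\omega^{c'}\in\mathrm{AdC}(\omega)$; it does \emph{not} say that such a $c'$ is non-overlapping with the cuts of $c$. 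By Case~\ref{CasImbrique} of Proposition~\ref{LemmaCut}, an admissible cut of the remainder $\omega_c$ may correspond to an admissible cut of $\omega$ that \emph{strictly contains} cuts of $c$, so the combined family is nested, not non-overlapping, and is not an extended admissible cut. Concretely, for $\omega=123321$ one has $\mathrm{AdC}(\omega)=\{\omega^{2,3},\omega^{1,4}\}=\{33,2332\}$, a nested pair, so $E\mathrm{AdC}(\omega)$ contains only the two singletons; yet your recursion produces the term $-\,121\,|\,232\,|\,33=-T_{\omega^{2,3}\,|\,\omega^{1,4}}$ from the pair $c=\omega^{2,3}$, $e'=(\omega_{2,3})^{1,4}$, and this term is required by the theorem, whose second sum runs over \emph{all} $\leqslant_{\clk}$-chains of distinct admissible cuts, nested ones included (compare the paper's own example with the nested cuts $\omega^{2,4}$ and $\omega^{3,4}$). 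Your bijection with bipartitions of elements of $E\mathrm{AdC}(\omega)$ has nowhere to put such a term, and the V-shaped-permutation cancellation does not govern it: for a nested chain the admissible decompositions $(c,e')$ are constrained by which cuts survive in which remainder (here $\omega^{2,3}$ is \emph{not} a cut of $\omega_{1,4}=121$, so the would-be cancelling partner does not exist), and the term survives for a different combinatorial reason than the one you give.

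A related point: reading the index set literally as the non-overlapping collections $E\mathrm{AdC}(\omega)$ makes the formula you are proving false on $\omega=123321$, since it would omit $-T_{\omega^{2,3}\,|\,\omega^{1,4}}$. The two expressions in the statement agree only if both sums are read as ranging over arbitrary chains $c_1\leqslant_{\clk}\dots\leqslant_{\clk}c_n$ of distinct admissible cuts; this is also the only reading under which the iterated remainders $(\omega_{c_1,\dots,c_{i-1}})^{c_i}$ in the definition of $T_e$ carry any content. To repair your argument you would need the surviving terms to be indexed by all such chains, tracking how a chain splits into an outer layer $c\in E\mathrm{AdC}(\omega)$ together with a chain of cuts of $\omega_c$ (some of which enclose cuts of $c$); the sign bookkeeping in that setting is more delicate than the bipartition count you describe. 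Your base case and the passage to $\mathcal{H}_{\mathcal{S}}$ via Proposition~\ref{HopfbiIdeal} are fine.
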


 	\begin{corollary}\label{AntipodeSym}
  Let $\G$ be a digraph and $\omega\in\mathcal{W}(G)$. Then, in $\Sym{\WGamma{W}{\G}}$, the antipode $S(\omega)$ calculated on $\omega$ is,
	\[S(\omega)=-\omega - \sum_{e\in E\mathrm{AdC}(\omega)} (-1)^{|e|}S_e=-\omega-\sum_{n=1}^{|\mathrm{AdC}(\omega)|}\sum_{\substack{c_1\leqslant_{\clk}\dots \leqslant_{\clk}c_n\\c_i\in \mathrm{AdC}(\omega)}}(-1)^{n}\,S_{c_1|\dots| c_n}\,,\]
	where $|\mathrm{AdC}(\omega)|$ designates the cardinality of $\mathrm{AdC}(\omega)$.
\end{corollary}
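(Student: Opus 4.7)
The plan is to derive Corollary~\ref{AntipodeSym} as an almost immediate consequence of Theorem~\ref{TheoremAntipode} by pushing the antipode through the canonical surjection
\[
\pi:\mathcal{T}\langle\mathcal{W}(G)\rangle\twoheadrightarrow\mathcal{T}\langle\mathcal{W}(G)\rangle/\mathcal{I}=\mathcal{S}\langle\mathcal{W}(G)\rangle.
\]
By Proposition~\ref{HopfbiIdeal} the subspace $\mathcal{I}$ is a bi-ideal of the graded connected bialgebra $\mathcal{H}_{\mathcal{T}}$, so the quotient $\mathcal{H}_{\mathcal{S}}$ is itself a graded connected bialgebra. In any such bialgebra the antipode exists and is uniquely determined as the convolution inverse of the identity; equivalently, $\pi$ is automatically a morphism of Hopf algebras and hence intertwines the antipodes, $\pi\circ S_{\mathcal{T}}=S_{\mathcal{S}}\circ\pi$. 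One could alternatively verify directly that $S_{\mathcal{T}}(\mathcal{I})\subseteq\mathcal{I}$, which is routine from the explicit antisymmetrization formula for $S_{\mathcal{T}}$ on products and the fact that $\mathcal{I}$ is spanned by commutators of concatenations.

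Next, I would apply $\pi$ termwise to the formula
\[
S_{\mathcal{T}}(\omega)=-\omega-\sum_{n=1}^{|\mathrm{AdC}(\omega)|}\;\sum_{\substack{c_1\leqslant_{\clk}\cdots\leqslant_{\clk}c_n\\c_i\in\mathrm{AdC}(\omega)}}(-1)^{n}\,T_{c_1|\cdots|c_n}
\]
supplied by Theorem~\ref{TheoremAntipode}, for any walk $\omega\in\mathcal{W}(G)$. The only piece of bookkeeping left is to identify $\pi(T_e)$ with $S_e$ for every extended admissible cut $e=c_1|\cdots|c_n$ of $\omega$. This follows by definition: $T_e$ is the concatenation (via $\bigcdot$, i.e.\ the tensor-algebra product written with bars) of the walks
\[
\omega_{c_1,\dots,c_n},\;(\omega_{c_1,\dots,c_{n-1}})^{c_n},\;\ldots,\;(\omega_{c_1})^{c_2},\;\omega^{c_1},
\]
and $\pi$ sends $\bigcdot$ to $\Box$ by the very definition of $\mathcal{H}_{\mathcal{S}}$. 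Hence $\pi(T_e)=S_e$ and $\pi(\omega)=\omega$, and the displayed formula descends termwise to the claimed expression for $S_{\mathcal{S}}(\omega)$.

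Since the antipode of the Hopf algebra $\mathcal{H}_{\mathcal{S}}$ is determined on the generators $\omega\in\mathcal{W}(G)$ by its values there (and extended as an anti-morphism of algebras to arbitrary disjoint unions), the corollary is proven. There is no substantial obstacle: the only point requiring minimal care is the observation that the summation index set $E\mathrm{AdC}(\omega)$ and the time-ordering $\leqslant_{\clk}$ (guaranteed total by Proposition~\ref{TotalOrder}) are intrinsic to the walk $\omega$ and unaffected by the quotient, so the indexing in Theorem~\ref{TheoremAntipode} transfers verbatim to $\mathcal{H}_{\mathcal{S}}$.
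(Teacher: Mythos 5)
Your proposal is correct and follows exactly the route the paper intends: the corollary is stated without separate proof precisely because it descends from Theorem~\ref{TheoremAntipode} through the quotient by the Hopf bi-ideal $\mathcal{I}$ of Proposition~\ref{HopfbiIdeal}, with the projection intertwining antipodes and sending each $T_e$ to $S_e$. No gaps.
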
	

	
	\begin{proof}[Proof of Theorem~\ref{TheoremAntipode}]
		We prove the theorem by induction on the cardinality of $\text{AdC}(\omega)$, using the relation $\varepsilon=\bigcdot\circ(\Id\otimes S)\circ \Delta_{\text{H}}$ where $\varepsilon$ is the counity of the Hopf algebra $\Tens{\WGamma{W}{\G}}$, and the algebra antimorphism relation $S(\omega|\omega')=S(\omega)S(\omega')$ for $\omega,\omega'\in\mathcal{W}(G)$.\\[-.5em]

  Firstly, if $\text{AdC}(\omega)=\emptyset$ then $\omega\in \text{SAW}(G)\cup \text{SAP}(G)$ and therefore $S(\omega)=-\omega$.\\[-.5em]

  Secondly, if $\text{AdC}(\omega)=\{\omega^{k,k'}\}$ then by Proposition~\ref{LemmaCut}, $\omega_{k,k'}\in \text{SAW}(G)\cup \text{SAP}(G)$ and 
  \[
  \Delta_{\text{H}}(\omega)=\omega\otimes \mathbf{1}+\mathbf{1}\otimes\omega+\omega_{k,k'}\otimes \omega^{k,k'}.\] Consequently, 
  \[
  S(\omega)=-\omega+\omega_{k,k'}\otimes \omega^{k,k'},
  \] 
  as claimed by the theorem.\\[-.5em]

  Thirdly, we assume that there exists and integer $n\in\NN$ such that the theorem is satisfied by any walk $\omega'\in\mathcal{W}(G)$ with $|\text{AdC}(\omega')|\leq n$. Consider $\omega\in\mathcal{W}(G)$ a walk with $|\text{AdC}(\omega)|=n+1$. Then,
				\begin{align*}
				S(\omega)=&-\omega-\sum_{\substack{k_1<k_1'<\dots <k_n<k_n'\\ \omega^{k_i,k_i'}\in E\text{AdC}(\omega)}}\omega_{k_1,k_1';\dots;k_n,k_n'}\bigcdot S(\omega^{k_1,k_1'}\bigcdot\dots\bigcdot\omega^{k_n,k_n'})\\
				=&-\omega-\sum_{\substack{k_1<k_1'<\dots <k_n<k_n'\\ \omega^{k_i,k_i'}\in E\text{AdC}(\omega)}}\omega_{k_1,k_1';\dots;k_n,k_n'}\bigcdot S(\omega^{k_s,k_s'})\bigcdot\dots\bigcdot S(\omega^{k_1,k_1'}).
			\end{align*}
			Thanks to Proposition~\ref{LemmaCut}, 
   \[
   \bigcup_{i=1}^{n}\text{AdC}(\omega^{k_i,k_i'})\subset \text{AdC}(\omega).
   \]
			and as a consequence, $\forall i\in\{1,\dots,n\}$, $|\text{AdC}(\omega^{k_i,k_i'})|\leq n$ and by induction hypothesis the theorem holds true for all $\omega^{k_i,k_i'}$. In particular, since any collection of $m$ admissible cuts of any $\omega^{k_i,k_i'}$ is totally ordered by $\leqslant_{\clk}$ per Proposition~\ref{TotalOrder},
			\[S(\omega)=-\omega-\sum_{n=1}^{|\text{AdC}(\omega)|}\sum_{\substack{\omega^{k_1,k_1'}\leqslant_{\clk}\dots \leqslant_{\clk} \omega^{k_n,k_n'}\\\omega^{k_i,k_i'}\in \text{AdC}(\omega)} }(-1)^{n}T_{\omega^{k_1,k_1'}\,|\,\dots\,|\, \omega^{k_n, k_n'}}.\]
	\end{proof}
	\begin{example}
		Consider the walk $$\omega=12223445=\WalkExampleAntipode{1}$$ 
		which has three admissible cuts
		$\text{AdC}(\omega)=\{\omega^{2,3},\omega^{1,3},\omega^{5,6}\}$ with
		$
		\omega^{2,3}\leqslant_{\clk}\omega^{1,3}\leqslant_{\clk}\omega^{5,6}.
		$
		Then the antipode of $\omega$ is 
		\begin{align*}
			S(\omega)=&-\omega+\WalkExampleAntipode{2}~|\hspace{-5.5mm}\ClosedWalkCorolla{1}{2}{{3}}\hspace{-4mm}+~ \WalkExampleAntipode{4}~|\hspace{-5.5mm}\ClosedWalkCorolla{1}{4}{{6}}\\[-.75em]
			&+~\WalkExampleAntipode{3}~|\hspace{-5mm}\ClosedWalkCorolla{2}{2}{{2,3}}\hspace{-3mm}-~\WalkSelfAvoiding{{1,...,5}}{{1,4,5,7}}{{}}~|\hspace{-5.5mm}\ClosedWalkCorolla{1}{4}{{6}}\hspace{-2mm}|\hspace{-5mm}\ClosedWalkCorolla{2}{2}{{2,3}}\\[-.5em]
			&-~\WalkExampleAntipode{3}~|\hspace{-5.5mm}\ClosedWalkCorolla{1}{2}{{2}}\hspace{-4mm}|\hspace{-5.5mm}\ClosedWalkCorolla{1}{2}{{3}}
			-~\WalkExampleAntipode{5}~|\hspace{-5.5mm}\ClosedWalkCorolla{1}{4}{{6}}\hspace{-4mm}|\hspace{-5.5mm}\ClosedWalkCorolla{1}{2}{{3}}\\[-.5em]
			&+\WalkSelfAvoiding{{1,...,5}}{{1,4,5,7}}{{}}|\hspace{-5.5mm}\ClosedWalkCorolla{1}{4}{{6}}\hspace{-4mm}|\hspace{-5.5mm}\ClosedWalkCorolla{1}{2}{{2}}\hspace{-4mm}|\hspace{-5.5mm}\ClosedWalkCorolla{1}{2}{{3}}.
		\end{align*}
	\end{example}

\section{Brace coalgebra and codendriform bialgebra on walks}\label{BraceSection}
\subsection{Brace coalgebra}
We show in this section that by paying attention to the number of admissible cuts appearing simultaneously in extended admissible cuts, we may endow $\mathcal{T}\langle \mathcal{W}(G)\rangle$ with a brace coalgebra structure from which the preLie co-structure on $\mathcal{W}(G)$ is recovered. 
We begin by recalling the necessary definitions pertaining to brace coalgebras.

\begin{definition}[$B_\infty$-algebra]
Let $\mathcal{V}$ be a vector space, $\mathcal{T}\langle\mathcal{V}\rangle$ the tensor algebra generated by $\mathcal{V}$ and let $\pi$ be the canonical projection from $\mathcal{T}\langle\mathcal{V}\rangle$ to $\mathcal{V}$. A $B_\infty$-algebra is a family $(\mathcal{V},(\langle-,-\rangle_{k,l})_{k,l\geq 0})$ where $\mathcal{V}$ is a vector space and for any $k,l\geq 0$,
$\langle-,-\rangle_{k,l}:\mathcal{V}^{\otimes k}\otimes \mathcal{V}^{\otimes l}\longrightarrow \mathcal{V}$ such that:
\begin{itemize}
\item[i)] $\langle-,-\rangle_{k,0}=\langle-,-\rangle_{0,k}=0$ if $k\neq 1$ and $\langle-,-\rangle_{1,0}=\langle-,-\rangle_{0,1}=\Id_\mathcal{V}$.
\item[ii)] The unique coalgebra morphism $m:\mathcal{T}\langle\mathcal{V}\rangle\otimes \mathcal{T}\langle\mathcal{V}\rangle\longrightarrow \mathcal{T}\langle\mathcal{V}\rangle$
defined by $\pi\circ m_{\mathcal{V}^{\otimes k}\otimes \mathcal{V}^{\otimes l}}=\langle-,-\rangle_k$ is associative. 
\end{itemize}
Then, equipped with the deconcatenation coproduct $\Delta_{\text{dec}}(v_1\ldots v_n):=\sum_{i=0}^n v_1\ldots v_i\otimes v_{i+1}\ldots v_n$, $(\mathcal{T}\langle\mathcal{V}\rangle,m,\Delta_{\text{dec}})$ is a Hopf algebra.
\end{definition}

A \textit{brace algebra} is a $B_\infty$-algebra such that $\langle-,-\rangle_{k,l}=0$ if $k\geq 2$. If $\mathcal{V}$ is a brace algebra, then for any $u=x_1\ldots x_k \in \mathcal{V}^{\otimes k}$ and $v\in \mathcal{T}\langle\mathcal{V}\rangle_+$,
\[m(u\otimes v)=\sum_{v\,=\,v_0\ldots v_{2k}} v_0 \langle x_1,v_1\rangle v_2\ldots \langle x_k, v_{2k-1}\rangle v_{2k},\]
where $v_i$ may be empty.\\[-.5em]

Dually, a locally finite brace coalgebra is a family $(\mathcal{V},(\delta_n)_{n\geq 1})$ where $\mathcal{V}$ is a vector space and for any $n$, $\delta_n:\mathcal{V}\longrightarrow \mathcal{V}\otimes \mathcal{V}^{\otimes n}$
such that $\delta_1=\Id_\mathcal{V}$; for any $v\in \mathcal{V}$, there exists $N(v) \in \mathbb{N}$ such that if $n\geq N(v)$, then $\delta_n(v)=0$; and the algebra morphism defined by
\begin{align}\label{BraceCoProduct}
\Delta :\left\{\begin{array}{rcl} \mathcal{T}\langle\mathcal{V}\rangle &\longrightarrow &\mathcal{T}\langle\mathcal{V}\rangle\otimes \mathcal{T}\langle\mathcal{V}\rangle\\
			v &\longmapsto& \displaystyle\Delta(v)=1\otimes v+v\otimes 1+\sum_{n\geq 1} ~\underbrace{~\delta_n(v)~}_{\in \mathcal{V}\otimes \mathcal{V}^{\otimes n}\subseteq \mathcal{T}\langle\mathcal{V}\rangle\otimes \mathcal{T}\langle\mathcal{V}\rangle},
		\end{array}\right.
\end{align}
is coassociative. Then, $(\mathcal{T}\langle\mathcal{V}\rangle,\bigcdot,\Delta)$ is a bialgebra, $\bigcdot$ being the concatenation product.

\begin{proposition}\label{BraceCoPreLie}
Let $(\mathcal{V},(\delta_n)_{n\geq 0})$ be a brace coalgebra with $\Delta$ the associated coassociative coproduct on $\mathcal{T}\langle\mathcal{V}\rangle$ as defined in Eq.~(\ref{BraceCoProduct}). Then $(\mathcal{V},\delta_1)$ is a preLie coalgebra. 
\end{proposition}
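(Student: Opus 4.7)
The plan is to derive the co-preLie identity from the coassociativity of $\Delta$ on $\mathcal{T}\langle\mathcal{V}\rangle$ by projecting the identity $(\Delta\otimes \Id)\Delta = (\Id\otimes\Delta)\Delta$ onto $\mathcal{V}^{\otimes 3}$. Let $\pi:\mathcal{T}\langle\mathcal{V}\rangle\to\mathcal{V}$ denote the canonical projection, and for $v\in\mathcal{V}$ write $\Delta(v)=1\otimes v+v\otimes 1+\sum_{n\geq 1}\delta_n(v)$ with $\delta_n(v)\in\mathcal{V}\otimes\mathcal{V}^{\otimes n}\subseteq \mathcal{T}\langle\mathcal{V}\rangle\otimes\mathcal{T}\langle\mathcal{V}\rangle$. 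I will apply $\pi^{\otimes 3}$ to both sides of the coassociativity relation and inspect which terms survive.

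For the left-hand side, I would note that in $(\Delta\otimes \Id)\Delta(v)$ the third tensor slot, coming unchanged from the second slot of $\Delta(v)$, contains either $1$, $v$, or an element of $\mathcal{V}^{\otimes n}$ for $n\geq 1$. Under $\pi$ this slot survives only when $n=1$, i.e.\ only the $\delta_1$ summand contributes. The surviving contribution is precisely $(\delta_1\otimes \Id)\delta_1(v)$, since for $\delta_1(v)=\sum v^{(1)}\otimes v^{(2)}$ the $\mathcal{V}\otimes\mathcal{V}$-component of $\Delta(v^{(1)})$ is $\delta_1(v^{(1)})$.

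The right-hand side is more delicate. Writing $\delta_2(v)=\sum v^{(0)}\otimes v^{(1)}v^{(2)}\in\mathcal{V}\otimes\mathcal{V}^{\otimes 2}$, using that $\Delta$ is an algebra morphism gives $\Delta(v^{(1)}v^{(2)})=\Delta(v^{(1)})\cdot\Delta(v^{(2)})$, and the key step is a degree count: to extract the $\mathcal{V}\otimes\mathcal{V}$-part of a product $\Delta(v^{(1)})\cdots\Delta(v^{(n)})$, each factor must contribute total bi-degree at least one, while the output has total bi-degree two, forcing $n\leq 2$. For $n=2$ the only contributing terms in $\Delta(v^{(i)})=1\otimes v^{(i)}+v^{(i)}\otimes 1+\cdots$ are those putting each $v^{(i)}$ into a distinct slot, giving $v^{(1)}\otimes v^{(2)}+v^{(2)}\otimes v^{(1)}$. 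So the $\delta_2$ contribution after $\pi^{\otimes 3}$ is $(\Id+\Id\otimes\tau)\delta_2(v)$, while higher $\delta_n$ vanish, and $\delta_1$ contributes $(\Id\otimes\delta_1)\delta_1(v)$.

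Putting these together, coassociativity yields, after projection,
\[
(\delta_1\otimes \Id)\delta_1(v)-(\Id\otimes\delta_1)\delta_1(v)=(\Id+\Id\otimes\tau)\,\delta_2(v).
\]
The right-hand side is manifestly invariant under $\Id\otimes\tau$, so the left-hand side must be as well, which is exactly the co-preLie identity for $\delta_1$. The main technical obstacle I anticipate is formulating the bi-degree argument cleanly so as to justify rigorously that only $\delta_2$ can contribute a nontrivial term beyond $(\Id\otimes\delta_1)\delta_1$ on the right-hand side; once this is laid out, the rest is a direct bookkeeping of the surviving summands.
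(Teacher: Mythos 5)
Your proposal is correct and follows essentially the same route as the paper: both project the coassociativity relation $(\Delta\otimes\Id)\circ\Delta=(\Id\otimes\Delta)\circ\Delta$ onto $\mathcal{V}^{\otimes 3}$ via $\pi^{\otimes 3}$, identify the surviving terms as $(\delta_1\otimes\Id)\circ\delta_1$ on one side and $(\Id\otimes\delta_1)\circ\delta_1+(\Id+\Id\otimes\tau)\circ\delta_2$ on the other, and conclude from the manifest $\Id\otimes\tau$-invariance of $(\Id+\Id\otimes\tau)\circ\delta_2$. Your bi-degree count is exactly the content of the paper's preliminary computation of $(\pi\otimes\pi)\circ\Delta(v_1\ldots v_n)$, which vanishes for $n\geq 3$ and equals $v_1\otimes v_2+v_2\otimes v_1$ for $n=2$.
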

\begin{proof}
For any $v_1,\ldots,v_n \in \mathcal{V}$,
\[
(\pi\otimes \pi)\circ \Delta(v_1\ldots v_n)=\begin{cases}
\delta_1(v_1),&\text{if }n=1,\\
v_1\otimes v_2+v_2\otimes v_1,&\text{if }n=2,\\
0,&\text{otherwise}.
\end{cases}
\]
Therefore, for any $v\in \mathcal{V}$,
\begin{align*}
(\pi\otimes \pi\otimes \pi)\circ (\Delta \otimes \Id)\circ \Delta(v)&=(\pi\otimes \pi\otimes \Id)\circ (\Delta \otimes \Id)\circ \delta_1(v)\\
&=(\delta_1\otimes \Id)\circ \delta_1(v),\\
(\pi\otimes \pi\otimes \pi)\circ (\Id \otimes \Delta)\circ \Delta(v)&=\sum_{k=1}^\infty (\Id \otimes \pi\otimes \pi)\circ (\Id \otimes \Delta)\circ \delta_n(v)\\
&=(\Id \otimes \delta_1)\circ \delta_1(v)+(\Id \otimes \Id \otimes \Id+\Id \otimes \tau)\circ \delta_2(v).
\end{align*} 
As a consequence, by the coassociativity of $\Delta$,
\[(\delta_1\otimes \Id)\circ \delta_1-(\Id \otimes \delta_1)\circ \delta_1=(\Id \otimes \Id \otimes \Id+\Id \otimes \tau)\circ \delta_2,\]
and it follows that,
\[(\delta_1\otimes \Id)\circ \delta_1-(\Id \otimes \delta_1)\circ \delta_1=(\Id \otimes \Id \otimes \Id+\Id \otimes \tau)\circ((\delta_1\otimes \Id)\circ \delta_1-\circ (\Id \otimes \delta_1)\circ \delta_1),\]
so $(\mathcal{V},\delta_1)$ is a preLie coalgebra. 
\end{proof}

In the case of interest here, namely that of $\mathcal{W}(G)$, define 
for $\omega\in\mathcal{W}(G)$,
\[\delta_n(\omega):=\sum_{c\in E_n\text{AdC}(w)}
\omega_{c}\otimes \omega^{c},\]
with $c$ an extended admissible cut \textit{involving exactly $n$ admissible cuts}, i.e. $c\in E_n\text{AdC}(w)\iff\omega^{c}=\omega^{k_1,k_1'}\,|\,\cdots\,|\,\omega^{k_n,k_n'}$, $\omega^{k_i,k_i'}\in\text{AdC}(\omega)$.
By Theorem~\ref{HopfT}, the coproduct defined as in Eq.~(\ref{BraceCoProduct}) with the above definition for the $\delta_n$, namely $\Delta_\mathrm{H}$, is coassociative.  Proposition~\ref{BraceCoPreLie} then implies that $(\mathcal{W}(G),\delta_1)$, $\delta_1\equiv \Delta_{\text{CP}}$, is a preLie coalgebra. In other terms, Theorem~\ref{DeltaCPTHM} may be seen as a corollary of Theorem~\ref{HopfT}.

\subsection{Codendriform bialgebra}
The brace co-structure on $\mathcal{W}(G)$ now implies that $\mathcal{T}(\mathcal{W}(G))$ is a codendriform bialgebra, a dual of the results of \cite{Ronco2001}. 

Denoting by $E\text{AdC}_+(\omega)=E\text{AdC}(\omega)\cup\{\mathbf{1},\omega\}$ the set of extended admissible cuts of $\omega$ augmented by the empty cut and the total cut, recall that for any $n\geq 1$ walks $\omega_1,\cdots,\omega_n\in\mathcal{W}(G)$,
\[
\Delta_{\mathrm{H}}(w_1\mid \ldots \mid w_n)=\sum_{c_i \in EAdC_+(\omega_i)} (\omega_1)_{c_1}\mid \ldots \mid (\omega_n)_{c_n}\otimes \omega_1^{c_1}\mid \ldots \mid \omega_n^{c_n}.
\]
Now define, for any nonempty word $\omega_1\mid \ldots \mid \omega_n\in\mathcal{T}\langle \mathcal{W}(G)\rangle$, the maps
\begin{align*}
\Delta_\prec(\omega_1\mid \ldots \mid \omega_n)&:=\sum_{\substack{c_i \in E\text{AdC}_+(\omega_i),\\ (\omega_1)_{c_1}\neq \mathbf{1}}} (\omega_1)_{c_1}\mid \ldots \mid (\omega_n)_{c_n}\otimes \omega_1^{c_1}\mid \ldots \mid \omega_n^{c_n},\\
\Delta_\succ(w_1\mid \ldots \mid w_n)&:=\sum_{\substack{c_i \in E\text{AdC}_+(w_i),\\ (\omega_1)_{c_1}=\mathbf{1}}} (\omega_1)_{c_1}\mid \ldots \mid (\omega_n)_{c_n}\otimes \omega_1^{c_1}\mid \ldots \mid \omega_n^{c_n}.
\end{align*}
\begin{proposition}
    $(\mathcal{T}(\mathcal{W}(G)),\Delta_\prec,\Delta_\succ)$ is a codendriform bialgebra. Furthermore, for any $x\in \mathcal{T}(\mathcal{W}(G))$ with no constant term and any $y\in \mathcal{T}(\mathcal{W}(G))$, $\Delta_\prec(x\mid y)=\Delta_\prec(x)\mid \Delta_\mathrm{H}(y)$, and $\Delta_\succ(x\mid y)=\Delta_\succ(x)\mid \Delta_\mathrm{H}(y)$.
\end{proposition}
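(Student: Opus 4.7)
The plan is to prove the proposition in three steps: first, to establish the two multiplicativity identities claimed for $\Delta_\prec$ and $\Delta_\succ$; second, to verify the three codendriform co-associativity axioms $(\Delta_\prec\otimes\Id)\circ\Delta_\prec=(\Id\otimes\Delta_{\text{H}})\circ\Delta_\prec$, $(\Delta_\succ\otimes\Id)\circ\Delta_\prec=(\Id\otimes\Delta_\prec)\circ\Delta_\succ$ and $(\Delta_{\text{H}}\otimes\Id)\circ\Delta_\succ=(\Id\otimes\Delta_\succ)\circ\Delta_\succ$ on a single walk; and third, to extend them to arbitrary forests by induction, exploiting the multiplicativity identities of Step 1. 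Since $\Delta_{\text{H}}$ is a morphism of algebras (Definition~\ref{DefHopfCoproduct}), expanding $\Delta_{\text{H}}(x|y)$ factorises the sums over cuts of the walks belonging to $x$ from those of $y$, giving $\Delta_{\text{H}}(x|y)=\Delta_{\text{H}}(x)|\Delta_{\text{H}}(y)$ in $\mathcal{T}(\mathcal{W}(G))^{\otimes 2}$; and because the $\Delta_\prec/\Delta_\succ$ splitting is determined solely by whether $(\omega_1)_{c_1}=\mathbf{1}$, with $\omega_1$ lying in $x$ as soon as $x$ has no constant term, this condition filters only the $\Delta_{\text{H}}(x)$ factor, yielding the two stated identities.

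For the single-walk verification, note that for any $\omega\in\mathcal{W}(G)$ one has $\Delta_\succ(\omega)=\mathbf{1}\otimes\omega$ and $\Delta_\prec(\omega)=\omega\otimes\mathbf{1}+\sum_{c\in E\text{AdC}(\omega)}\omega_c\otimes\omega^c$. The third axiom is then immediate since $\Delta_{\text{H}}(\mathbf{1})=\mathbf{1}\otimes\mathbf{1}$ yields $\mathbf{1}\otimes\mathbf{1}\otimes\omega$ on both sides. The second axiom reduces to $\mathbf{1}\otimes\Delta_\prec(\omega)$ on both sides, using $\Delta_\succ(\omega_c)=\mathbf{1}\otimes\omega_c$ on the left. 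Only the first axiom requires real work: expanding both sides and cancelling the boundary terms in which at least one tensor slot is $\mathbf{1}$ leaves precisely the identity
\[\sum_{c\in E\text{AdC}(\omega)}\sum_{c'\in E\text{AdC}(\omega_c)}(\omega_c)_{c'}\otimes(\omega_c)^{c'}\otimes\omega^c=\sum_{c\in E\text{AdC}(\omega)}\sum_{c'\in E\text{AdC}(\omega^c)}\omega_c\otimes(\omega^c)_{c'}\otimes(\omega^c)^{c'},\]
which is Eq.~(\ref{HopfProofEq}) established in the proof of Theorem~\ref{HopfT}.

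Finally, to promote the three axioms from walks to arbitrary forests I would write $x=\omega_1|x'$ and apply the multiplicativity identities of Step 1 to factorise every term appearing in $(\Delta_\prec\otimes\Id)\circ\Delta_\prec(x)$, $(\Id\otimes\Delta_\prec)\circ\Delta_\succ(x)$, and so on, as a concatenation in $\mathcal{T}(\mathcal{W}(G))^{\otimes 3}$ of the analogous expression on the walk $\omega_1$ with either $(\Delta_{\text{H}}\otimes\Id)\circ\Delta_{\text{H}}(x')$ or $(\Id\otimes\Delta_{\text{H}})\circ\Delta_{\text{H}}(x')$ on the strictly shorter forest $x'$. The $\omega_1$-piece is handled by Step 2 and the $x'$-piece by the co-associativity of $\Delta_{\text{H}}$ given in Theorem~\ref{HopfT}, so an induction on the length of $x$ closes the argument. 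The main obstacle is therefore the first axiom on a single walk, whose resolution amounts to Eq.~(\ref{HopfProofEq})---the same combinatorial statement already proved for Theorem~\ref{HopfT}---so no new input is required and the codendriform structure follows purely formally from the Hopf structure on $\mathcal{T}(\mathcal{W}(G))$.
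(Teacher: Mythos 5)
Your proof is correct, and it rests on the same two pillars as the paper's: the coassociativity of $\Delta_{\mathrm{H}}$ (i.e.\ Eq.~(\ref{HopfProofEq})) and the observation that the $\prec/\succ$ splitting is governed solely by whether the remainder of the \emph{first} walk is $\mathbf{1}$. The architecture is genuinely different, though. The paper never passes through single walks: it works directly on a forest $\omega_1|\dots|\omega_n$, introduces a Sweedler-type notation $E\text{AdC}_+^{(2)}(\omega)$ for the two-fold cut set provided by coassociativity, and then reads off each of the three codendriform identities by sorting the terms of the iterated coproduct according to the conditions $(\omega_1)_{c_1}=\mathbf{1}$ or $\neq\mathbf{1}$ and $\omega_1^{c_1(1)}=\mathbf{1}$ or $\neq\mathbf{1}$. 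You instead factor the argument through the module-compatibility identities $\Delta_\prec(x\,|\,y)=\Delta_\prec(x)\,|\,\Delta_{\mathrm{H}}(y)$ and $\Delta_\succ(x\,|\,y)=\Delta_\succ(x)\,|\,\Delta_{\mathrm{H}}(y)$, verify the axioms on a single walk (where $\Delta_\succ(\omega)=\mathbf{1}\otimes\omega$ because a genuine extended admissible cut always leaves a nonempty remainder --- note the length-$0$ walk $w_0$ is not $\mathbf{1}$, so this does need the small argument you implicitly use), and then propagate to forests by concatenation. Your route is slightly longer but buys two things: it actually \emph{proves} the ``Furthermore'' clause of the statement, which the paper's proof asserts in the proposition but never addresses, and it isolates the only nontrivial verification (the $\prec\prec$ axiom on a walk) as literally Eq.~(\ref{HopfProofEq}), making transparent that no combinatorial input beyond Theorem~\ref{HopfT} is needed. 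One caveat, which you share with the paper: when reducing the first axiom to Eq.~(\ref{HopfProofEq}) one must expand $\Delta_{\mathrm{H}}(\omega^c)$ for $\omega^c$ a \emph{forest} of cuts, whose coproduct contains mixed terms (some factors left whole, others totally cut); these are absorbed into the sum over $c'\in E\text{AdC}(\omega^c)$ only under the same liberal reading of that index set that the proof of Theorem~\ref{HopfT} already uses, so your reduction is exactly as rigorous as the paper's.
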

\begin{proof}
By the coassociativity of $\Delta_\mathrm{H}$, for any $\omega\in\mathcal{W}(G)$, $(\Delta_{\mathrm{H}}\otimes \Id)\circ \Delta_{\mathrm{H}}(\omega)=(\Id \otimes \Delta_{\mathrm{H}})\circ \Delta_{\mathrm{H}}(\omega)$, that is
\begin{align*}
&\sum_{c\in E\text{AdC}_+(\omega)}\sum_{c'\in E\text{AdC}_+(\omega_c)}(\omega_{c})_{c'}\otimes (\omega_c)^{c'}\otimes \omega^{c}=\sum_{c\in E\text{AdC}_+(\omega)}\sum_{c'\in E\text{AdC}_+(\omega^c)}\omega_{c}\otimes (\omega^{c})_{c'}\otimes (\omega^{c})^{c'}.
\end{align*}
Thus, there exists a set $E\text{AdC}_+^{(2)}(\omega)$ such that the above may be put in the form
$$
\sum_{c\in E\text{AdC}_+^{(2)}(\omega)} \omega_{c}\otimes \omega^{c(1)}\otimes \omega^{c(2)}.
$$
Then, using this notation,
\begin{align*}
&\hspace{-15mm}(\Delta_{\mathrm{H}}\otimes \Id)\circ \Delta_\succ(\omega_1|\ldots|\omega_n)=(\Id \otimes \Delta_\succ)\circ \Delta_\succ(\omega_1|\ldots|\omega_n)\\
&=\sum_{\substack{c_i \in EAdC_+^{(2)}(w_i),\\ (\omega_1)_{c_1}=\,\omega_1^{c_1(1)}=\mathbf{1}}} (\omega_1)_{c_1}\mid \ldots \mid (\omega_n)_{c_1}\otimes \omega_1^{c_1(1)}\mid \ldots \mid \omega_n^{c_n(1)}
\otimes \omega_1^{c_1(2)}\mid \ldots \mid \omega_n^{c_n(2)},\\
&\hspace{-15mm}(\Delta_\succ\otimes \Id)\circ \Delta_\prec(\omega_1|\ldots|\omega_n)=(\Id \otimes \Delta_\prec)\circ \Delta_\succ(\omega_1|\ldots|\omega_n)\\
&=\sum_{\substack{c_i \in E\text{AdC}_+^{(2)}(w_i),\\ (\omega_1)_{c_1}=\mathbf{1},\:\omega_1^{c_1(1)}\neq \mathbf{1}}} (\omega_1)_{c_1}\mid \ldots \mid (\omega_n)_{c_1}\otimes \omega_1^{c_1(1)}\mid \ldots \mid \omega_n^{c_n(1)}
\otimes \omega_1^{c_1(2)}\mid \ldots \mid \omega_n^{c_n(2)},\\
&\hspace{-15mm}(\Delta_\prec\otimes \Id)\circ \Delta_\prec(\omega_1|\ldots|\omega_n)=(\Id \otimes \Delta_{\mathrm{H}})\circ \Delta_\prec(\omega_1|\ldots|\omega_n)\\
&=\sum_{\substack{c_i \in E\text{AdC}_+^{(2)}(w_i),\\ (\omega_1)_{c_1}\neq \mathbf{1}}} (\omega_1)_{c_1}\mid \ldots \mid (\omega_n)_{c_1}\otimes \omega_1^{c_1(1)}\mid \ldots \mid \omega_n^{c_n(1)}
\otimes \omega_1^{c_1(2)}\mid \ldots \mid \omega_n^{c_n(2)}.
\end{align*}
\end{proof}

	\section{Cacti, towers and corollas}\label{CactiTowerCorollas}
Recall from Definition~\ref{DefCac} that a cactus is a kind of ``disentangled" walk  resembling a self-avoiding skeleton on which bouquets of towers are attached. Given that by the proof of Theorem~\ref{WalkTree} all walks are chronologically equivalent to cacti, it seems intuitive that bouquets and towers are basic building blocks of walks and ought to be associated to sub-algebras of the walk algebras. In this section we formalize this observation by showing first that cacti, towers and corollas (a special type of bouquets) give rise to sub-Hopf algebras of the tensor and symmetric algebras of all walks; and secondly that the mapping from walks to cacti effected by the map $C$ defined in the proof of Theorem~\ref{WalkTree} generates Hopf algebra morphisms.

In a later work, using the permutative non-associative product nesting and the NAP-copreLie operad it forms with $\Delta_{\text{CP}}$, we will formalize and exploit algebraically  the construction of walks from bouquets and towers based on Lawler's process.

\subsection{Hopf subalgebras associated to cacti, towers and corollas}
 \begin{definition}[Tower]\label{TowerDef}
		Let $\G$ be a digraph. A tower with root $r_1$ and of height $n\in\mathbb{N}\backslash\{0\}$ is a closed walk made of a collection $\text{Cycl}_1$, \dots, $\text{Cycl}_n$ of \emph{simple cycles} with roots $r_1$, \dots, $r_n$, respectively, and such that:
  \begin{enumerate}
			\item[i)] $V(\text{Cycl}_k)\cap V(\text{Cycl}_{k+1})=\{r_{k+1}\}$ for any $k\in\{1,\dots,n-1\}$, 
			\item[ii)] $V(\text{Cycl}_k)\cap V(\text{Cycl}_{l})=\emptyset$ whenever $|k-l|> 1$.
		\end{enumerate}
		The vector space spanned by the towers of $\G$ is denoted by $\text{Tow}(\G)$. The space $\Tens{\text{Tow}(\G)}$ (respectively $\Sym{\text{Tow}(\G)}$) is the tensor algebra (respectively the symmetric algebra) generated by $\text{Tow}(\G)$.
	\end{definition}
	
	
	\begin{definition}[Corolla]
		Let $\G$ be a digraph. A corolla of root $r$ in $\G$ is a closed walk made of $n\in\mathbb{N}\backslash\{0\}$ simple cycles $\text{Cycl}_1$, \dots, $\text{Cycl}_n$, all with a common root $r$. Corollas are bouquets of simple cycles.
        
		The vector space spanned by all corollas (respectively corollas of root $r$) of $\G$ is $\text{Cor}(\G)$ (respectively $\text{Cor}_r(\G)$). The space $\Tens{\text{Cor}(\G)}$ (respectively $\Sym{\text{Cor}(\G)}$) is the tensor algebra (respectively the symmetric algebra) generated by $\text{Cor}(\G)$.  We define the spaces $\Tens{\text{Cor}_r(\G)}$ and $\Sym{\text{Cor}_r(\G)}$ similarly from $\text{Cor}_r(\G)$.
	\end{definition}
	
	\begin{example}
 The walk $123454321\in\text{Tow}(G)$ is a tower, while walks $111\in\text{Cor}_1(G)$ and $123412451\in\text{Cor}_1(G)$ are corollas with root $1$.
	\end{example}


	\begin{proposition}\label{SubAlgebras}
		Let $\G$ be a digraph and $r\in V(\G)$. Then,
		\begin{enumerate}
			\item $(\Tens{\emph{\text{Tow}}(\G)},\bigcdot,\Delta_{\emph{\text{H}}})$,  $(\Tens{\emph{\text{Cor}}_r(\G)},\bigcdot,\Delta_{\emph{\text{H}}})$, $(\Tens{\emph{\text{Cor}}(\G)},\bigcdot,\Delta_{\emph{\text{H}}})$ and $(\Tens{\emph{\text{Cact}}(\G)},\bigcdot,\Delta_{\emph{\text{H}}})$ are Hopf subalgebras of $(\Tens{\WGamma{W}{\G}},\bigcdot,\Delta_{\emph{\text{H}}})$.
			\item $(\Sym{\emph{\text{Tow}}(\G)},\Box,\Delta_{\emph{\text{H}}})$, $(\Sym{\emph{\text{Cor}}_r(\G)},\Box,\Delta_{\emph{\text{H}}})$, $(\Sym{\emph{\text{Cor}}(\G)},\Box,\Delta_{\emph{\text{H}}})$  and $(\Sym{\emph{\text{Cact}}(\G)},\Box,\Delta_{\emph{\text{H}}})$ are Hopf subalgebras of $(\Sym{\WGamma{W}{\G}},\Box,\Delta_{\emph{\text{H}}})$.
		\end{enumerate}
	\end{proposition}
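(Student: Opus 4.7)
The plan is to reduce the proposition to a closure property for individual walks and then invoke the standard principle that a graded connected sub-bialgebra of a graded connected Hopf algebra automatically inherits the antipode. For each class $\mathcal{A}\in\{\text{Tow}(\G),\text{Cor}_r(\G),\text{Cor}(\G),\text{Cact}(\G)\}$, the tensor algebra $\Tens{\mathcal{A}}$ is tautologically $\bigcdot$-stable, so part~(1) reduces to the following \textbf{Closure Lemma}: for every $\omega\in\mathcal{A}$ and every extended admissible cut $c\in E\text{AdC}(\omega)$, the remainder $\omega_c$ lies in $\mathcal{A}$ and every individual admissible cut appearing as a factor of $\omega^c$ also lies in $\mathcal{A}$. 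Granting this, $\Delta_{\text{H}}$ restricts to a coproduct $\Tens{\mathcal{A}}\to\Tens{\mathcal{A}}\otimes\Tens{\mathcal{A}}$, the grading $\deg$ and connectedness pass to $\Tens{\mathcal{A}}$, and $\Tens{\mathcal{A}}$ becomes a Hopf subalgebra of $\mathcal{H}_{\mathcal{T}}$. Part~(2) then follows by quotienting by $\mathcal{I}\cap\Tens{\mathcal{A}}$, which is a Hopf bi-ideal by the same argument as Proposition~\ref{HopfbiIdeal}.

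I would prove the Closure Lemma in increasing generality, starting with towers. For a tower $\omega$ of height $n$ with simple cycles $\text{Cycl}_1,\ldots,\text{Cycl}_n$, I expect $\text{AdC}(\omega)$ to consist precisely of the upper sub-towers $\text{Cycl}_j\cdots\text{Cycl}_n$ for $2\le j\le n$: a lower cycle alone fails Definition~\ref{def:admiscut} because its root is revisited when the walk backtracks through the cycle immediately below it, whereas upper sub-towers are admissible because after their completion the walk only revisits roots strictly lower in the tower. The corresponding remainders $\text{Cycl}_1\cdots\text{Cycl}_{j-1}$ are again towers, and since admissible cuts are pairwise nested the non-overlapping condition forces extended admissible cuts to coincide with single admissible cuts. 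A parallel argument for a corolla at root $r$ with petals $\text{Cycl}_1,\ldots,\text{Cycl}_n$ gives $\text{AdC}(\omega)=\{\text{Cycl}_j\cdots\text{Cycl}_n\colon 2\le j\le n\}$: admissible cuts are the tails, all corollas at $r$ sharing the terminal vertex, hence pairwise overlapping and again precluding non-trivial extended cuts; the remainders are initial corollas at $r$. Closure for the unrooted class $\text{Cor}(\G)$ follows petal-by-petal from the rooted case.

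The main obstacle is the cactus class. Here I would invoke the temporal-tree picture of Theorem~\ref{WalkTree}: an admissible cut $\omega^{k,k'}$ of a cactus $\omega$ corresponds to a corolla-shaped subtree of $t(\omega)$, a node together with a contiguous block of its latest children, and its excision produces smaller rooted trees representing $\omega_c$ and $\omega^c$. The technical heart is verifying that the biconditional of Definition~\ref{DefCac} survives in both walks: any repeated vertex in $\omega_c$ (resp.\ $\omega^c$) is inherited from $\omega$, hence delimits a closed subwalk in $\text{LES}(\omega)$, which by the no-straddling Lemma~\ref{PropLEWLES} must lie entirely on one side of the cut boundary and is therefore in $\text{LES}(\omega_c)$ (resp.\ $\text{LES}(\omega^c)$) as appropriate. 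The passage from a single admissible cut to an arbitrary extended admissible cut then follows routinely by iteration over the non-overlapping components, with Proposition~\ref{LemmaHopf} preserving admissibility in the successive intermediate remainders.
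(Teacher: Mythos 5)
Your overall strategy---reduce everything to a closure lemma (remainders and the components of extended admissible cuts stay in the class) and let the graded connected machinery supply the antipode, then pass to the symmetric quotient---is essentially the paper's own route, and your characterizations of $\text{AdC}$ for towers and corollas (the nested upper sub-towers, resp.\ the terminal tails of petals) are correct, as is the observation that nestedness of these cuts precludes non-trivial extended cuts. Two points need tightening. First, for towers, your stated reason that ``a lower cycle alone fails Definition~\ref{def:admiscut} because its root is revisited'' is misattributed: a lower cycle $\text{Cycl}_j$ ($j<n$) is not even a contiguous closed subwalk of the tower (the walk interrupts it to ascend and descend through the higher cycles), so it never enters $\text{LES}(\omega)$ and the revisiting clause of Definition~\ref{def:admiscut} is never invoked; the unique closed subwalk rooted at $r_j$ is the full upper sub-tower, and it is admissible. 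Second, and more substantively, for corollas you assert $\text{AdC}(\omega)=\{\text{Cycl}_j\cdots\text{Cycl}_n\}$ without excluding cuts rooted at non-root vertices. Distinct petals may share vertices other than $r$ (the paper's own example $123412451$ repeats the vertex $2$), so repeated non-root vertices genuinely occur, and one must argue---as the paper does via Remark~\ref{AltLES}---that the closed subwalk delimited by two such occurrences straddles an intermediate return to $r$ and therefore is not a loop-erased section, hence not an admissible cut. That exclusion is the actual content of the paper's corolla argument and should not be left implicit. Your treatment of cacti (survival of the biconditional of Definition~\ref{DefCac} on both sides of a cut via the no-straddling Lemma~\ref{PropLEWLES}, then iteration over the components of an extended cut) is in fact more detailed than the paper's one-line appeal to Proposition~\ref{LemmaHopf} and is a sound way to finish.
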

	
	\begin{proof}
  Firstly, the claims regarding $(\Tens{\text{Tow}(\G)},\bigcdot,\Delta_{\text{H}})$ and $(\Sym{\text{Tow}(\G)},\Box,\Delta_{\text{H}})$ are shown by direct calculation.
			
   Secondly, let $\omega$ be a corolla with root $r\in V(G)$ comprising $n\in\mathbb{N}
   \backslash\{0\}$ simple cycles $\text{Cycl}_{1\leq k\leq n}$. Let $v\in V(G)$ be a 
   vertex other than the root $r$ visited by $\omega$. Since $\text{Cycl}_1$, \dots, 
   $\text{Cycl}_n$ are simple cycles, if $v$ is visited several times by $\omega$ 
   then two instances of $v$ cannot be found within a unique simple cycle. But by 
   using Remark~\ref{AltLES} equivalent to Definition~\ref{def:admiscut} for the loop-
   erased sections, any subwalk $\omega^{l,l'}=w_l\cdots w_{l'}$ with $w_l=w_{l'}=v$ 
   is not an admissible cut of $\omega$, as it is not a valid loop-erased section of 
   $\omega$. Then all the admissible cuts of $\omega$ take place at the root $r$, \vspace{-5mm} 
			\[
   \Delta_{\text{H}}(\omega)=\omega \otimes 1+1\otimes \omega+\sum_{p=1}^{n-1}\GeneralClosedWalkCorolla{r}{\text{Cycl}_1}{\text{Cycl}_k} \otimes \GeneralClosedWalkCorolla{r}{\text{Cycl}_{k+1}}{\text{Cycl}_n}
   \]
			which implies the claims for $(\Tens{\text{Cor}(\G)},\bigcdot,\Delta_{\text{H}})$, $(\Sym{\text{Cor}_i(\G)},\Box,\Delta_{\text{H}})$ and $(\Sym{\text{Cor}(\G)},\Box,\Delta_{\text{H}})$.
	
 Thirdly, the claims about $(\Tens{\text{Cact}(\G)},\bigcdot,\Delta_{\text{H}})$ and $(\Sym{\text{Cact}(\G)},\Box,\Delta_{\text{H}})$ both follow from Proposition~\ref{LemmaHopf} and the fact that an admissible cut of a walk $\omega$ is, by definition, a loop-erased section of $\omega$.
	\end{proof}

\begin{remark}
Since Proposition~\ref{SubAlgebras} establishes Hopf algebra structures on the tensor algebras generated by towers, corollas and cacti, the constructions of \S\ref{BraceSection} extend to these walks as well. That is, there are brace coalgebras and codendriform bialgebras on  towers, corollas and cacti and these are sub coalgebras of the structures of \S\ref{BraceSection} on all walks.
\end{remark}

\subsection{The cactus map generates Hopf algebra morphisms}\label{Cmorphism}
We now show that the map $C$ defined in Eq.~(\ref{CactusMapDef}) which sends a walk $\omega$ to a cactus generates Hopf algebra morphisms. Recall that, by definition,  $C(\omega)$ is a cactus in the complete graph \(K_\mathbb{N}\) with $V(K_\mathbb{N})=\mathbb{N}$. 

	Let $\mathcal{I}_\mathbb{N}$ be the set of the injective maps  $\mathbb{N}\to \mathbb{N}$. For $f\in\mathcal{I}_\mathbb{N}$ and $\omega=w_0\cdots w_\ell\in\mathcal{W}(K_\mathbb{N})$, we denote by $f(\omega)\in\mathcal{W}(K_\mathbb{N})$ the walk defined by $f(\omega):=f(w_0)\dots f(w_\ell)$.
	
	\begin{definition}
		 Let $\mathcal{J}_1$ and $\mathcal{J}_2$ be the vector spaces defined by:
			\begin{align*}
				\mathcal{J}_1&:=\text{Span}\big(\omega_1\,|\,\dots\,|\,\omega_n-f_1(\omega_1)\,|\,\dots\,|\,f_n(\omega_n);~ n\in\NN\backslash\{0\},  \omega_i\in\text{Cact}(K_\mathbb{N}), f_i\in\mathcal{I}_{\mathbb{N}}\big),\\			
				\mathcal{J}_2&:=\text{Span}\big(\omega_1\,\Box\,\dots\,\Box\,\omega_n-f_1(\omega_1)\,\Box\,\dots\,\Box\,f_n(\omega_n);~ n\in\NN\backslash\{0\},  \omega_i\in\text{Cact}(K_\mathbb{N}), f_i\in\mathcal{I}_{\mathbb{N}}\big).
			\end{align*}  
	\end{definition}
	
	\begin{proposition}
		The vector space $\mathcal{J}_1$ (respectively $\mathcal{J}_2$)  is a Hopf biideal of $\Tens{\emph{\text{Cact}}(K_\mathbb{N})}$ (respectively $\Sym{\emph{\text{Cact}}(K_\mathbb{N})}$).
	\end{proposition}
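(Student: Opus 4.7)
The plan is to establish, separately for $\mathcal{J}_1$ (the argument for $\mathcal{J}_2$ is identical, replacing $|$ by $\Box$), the three properties: (i) two-sided ideal for $\bigcdot$, (ii) coideal for $\Delta_{\text{H}}$, (iii) stability under the antipode $S$. The pivotal technical lemma supporting (ii) and (iii) is that any injection $f\in\mathcal{I}_\mathbb{N}$ commutes with the loop-erasing combinatorics.

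For (i), I would note that $\text{Id}_\mathbb{N}\in\mathcal{I}_\mathbb{N}$, so for any generator $x=\omega_1|\dots|\omega_n-f_1(\omega_1)|\dots|f_n(\omega_n)$ of $\mathcal{J}_1$ and any forest $y=\eta_1|\dots|\eta_m$ of cacti, the concatenations $x\bigcdot y$ and $y\bigcdot x$ are again of the form $\alpha-(g_1(\alpha_1)|\dots|g_{n+m}(\alpha_{n+m}))$ with $g_i\in\mathcal{I}_\mathbb{N}$, obtained by padding with identity maps. Linearity concludes the ideal property.

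For (ii), the supporting lemma I would establish first is: for any cactus $\omega\in\text{Cact}(K_\mathbb{N})$ and any $f\in\mathcal{I}_\mathbb{N}$, the walk $f(\omega)$ is again a cactus, and the injection
\[\omega^{k,k'}\longmapsto f(\omega)^{k,k'}=f(\omega^{k,k'})\]
sends $\text{LES}(\omega)$, $\text{AdC}(\omega)$ and $E\text{AdC}(\omega)$ bijectively onto their $f$-images; moreover $f(\omega)_c=f(\omega_c)$ for every extended admissible cut $c$. This holds because Lemma~\ref{PropLEWLES} and Definition~\ref{def:admiscut} characterize loop-erased sections and admissibility purely in terms of equalities and non-equalities of vertex labels appearing in $\omega$, all of which are preserved (in both directions) by an injection. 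Using this lemma and the bijective correspondence $c\mapsto f(c)$, for $x=\omega_1|\dots|\omega_n-f_1(\omega_1)|\dots|f_n(\omega_n)$ we pair the summands of $\Delta_{\text{H}}(x)$ term-by-term; applying the standard identity
\[a\otimes b-a'\otimes b'=(a-a')\otimes b+a'\otimes (b-b')\]
to each matched pair expresses $\Delta_{\text{H}}(x)$ as a sum of terms in $\mathcal{J}_1\otimes \Tens{\text{Cact}(K_\mathbb{N})}$ and in $\Tens{\text{Cact}(K_\mathbb{N})}\otimes\mathcal{J}_1$, which is the coideal property.

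For (iii), the bialgebra $(\Tens{\text{Cact}(K_\mathbb{N})},\bigcdot,\Delta_{\text{H}})$ is graded and connected by Proposition~\ref{SubAlgebras} together with Theorem~\ref{HopfT}. In any such bialgebra the antipode is uniquely determined by the inductive Takeuchi formula on the augmentation ideal, so any biideal (ideal that is also a coideal) is automatically stable under $S$, making $\mathcal{J}_1$ a Hopf ideal. Alternatively, one can check directly on the explicit antipode formula of Theorem~\ref{TheoremAntipode} that $S(f(\omega))=f(S(\omega))$ for $f\in\mathcal{I}_\mathbb{N}$ (again using that admissible cuts and their total order $\leqslant_{\clk}$ are transported by $f$), and conclude that $S(\mathcal{J}_1)\subseteq\mathcal{J}_1$. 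The same reasoning, carried out in the quotient $\Sym{\text{Cact}(K_\mathbb{N})}=\Tens{\text{Cact}(K_\mathbb{N})}/\mathcal{I}$ and with $\Box$ in place of $\bigcdot$, settles the case of $\mathcal{J}_2$. The main obstacle, which amounts to unpacking the definitions rather than a genuine difficulty, is the combinatorial lemma that admissibility of a cut depends only on the equality pattern of the vertex labels visited and is therefore preserved by any injective relabeling; once this is recorded, the ideal, coideal and antipode-stability properties all follow by straightforward bookkeeping.
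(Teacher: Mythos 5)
Your proposal is correct and follows essentially the same route as the paper: the key observation that an injective relabeling $f$ preserves lengths, loop-erased sections, admissible cuts and remainders (so that cacti map to cacti and $f(\omega)_c=f(\omega_c)$), the identity-padding argument for the ideal property, the add-and-subtract pairing of summands for the coideal property, and antipode stability via the explicit formula of Theorem~\ref{TheoremAntipode}. Your alternative antipode argument (stability of a graded biideal under $S$ in a graded connected bialgebra) is a valid shortcut the paper does not take, but it is a minor variation rather than a different approach.
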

	
	\begin{proof}
		We prove the result for $\mathcal{J}_1$. The reasoning for $\mathcal{J}_2$ is entirely similar.

  Let $\omega=w_0\cdots w_\ell\in \mathcal{W}(G)$, then for any injective map $f\in \mathcal{I}_{\mathbb{N}}$, the length of $f(\omega)$ is still $\ell$ and we have the relation Eq.~(\ref{finjectAdC}), that is
  \begin{equation}\label{finjectAdC2}
   \omega^{k,k'}\in \text{AdC}(\omega)\iff f(\omega)^{k,k'}\in \text{AdC}(f(\omega)).
   \end{equation} 
Therefore, if $\omega\in\text{Cact}(G)$, $f(\omega)$ is also a cactus.\\[-.5em]

			Now let $\alpha:=\omega_1|\dots|\omega_n-f_1(\omega_1)|\dots|f_n(\omega_n)$ be a generator of $\mathcal{J}_1$ and $\beta:=\tau_1|\dots|\tau_m\in\Tens{\text{Cact}(K_\mathbb{N})}$,
			\begin{align*}
				\alpha\bigcdot \beta&=\omega_1\,|\,\dots\,|\,\omega_n\,|\,\tau_1\,|\,\dots\,|\,\tau_m-f_1(\omega_1)\,|\,\dots\,|\,f_n(\omega_n)\,|\,\tau_1\,|\,\dots\,|\,\tau_m\\
				&=\omega_1\,|\,\dots\,|\,\omega_n\,|\,\tau_1\,|\,\dots\,|\,\tau_m-f_1(\omega_1)\,|\,\dots\,|\,f_n(\omega_n)\,|\,\Id(\tau_1)\,|\,\dots\,|\,\Id(\tau_m).
			\end{align*}
			So we obtain $\alpha \bigcdot \beta \in\mathcal{J}_1$ and similarly $\beta\bigcdot \alpha\in\mathcal{J}_1$. As a consequence, $\mathcal{J}$ is an ideal.\\[-.5em]

    Let  $\omega=w_0\dots w_\ell\in\text{Cact}(G)$ and $f\in\mathcal{I}_\mathbb{N}$. By injectivity of $f$ for $c\in E\text{AdC}(\omega)$ with $\omega^c:=\omega^{k_1,k_1';\dots;k_nk_n'}$, we have
    \[
   f(\omega)_c=f(\omega)_{k_1,k_1';\dots;k_nk_n'}=f(\omega_{k_1,k_1';\dots;k_nk_n'})=f(\omega_c).
   \]
   Therefore
			\begin{align*}
				\Delta_{\text{H}}(\omega-f(\omega))&=(\omega-f(\omega))\otimes \mathbf{1}+\mathbf{1}\otimes(\omega-f(\omega))
				+\displaystyle\sum_{c\in E\text{AdC}(\omega)}\big\{ \omega_{c}\otimes \omega^c-f(\omega)_{c}\otimes f(\omega)^{c}\big\},\\
    &=(\omega-f(\omega))\otimes \mathbf{1}+\mathbf{1}\otimes(\omega-f(\omega))
				+\displaystyle\sum_{c\in E\text{AdC}(\omega)}\big\{ \omega_{c}\otimes \omega^c-\omega_{c}\otimes f(\omega)^{c}\big\}\\
    &\hspace{65mm}+\displaystyle\sum_{c\in E\text{AdC}(\omega)}\big\{ \omega_{c}\otimes f(\omega)^{c}-f(\omega_{c})\otimes f(\omega)^c\big\}.
			\end{align*}
			This shows that $\Delta_{\text{H}}(\omega-f(\omega))\in\Tens{\text{Cact}(K_\mathbb{N})}\otimes\mathcal{J}_1+\mathcal{J}_1\otimes\Tens{\text{Cact}(K_\mathbb{N})}$.
			Since furthermore $\Delta_{\text{H}}$ is an algebra morphism, we conclude that $\mathcal{J}_1$ is a coideal.\\[-.5em]
			 
    Finally, by Eq.~(\ref{finjectAdC}), Theorem~\ref{TheoremAntipode} and the fact the antipode is an algebra antimorphism, we get $S(\mathcal{J}_1)\subset \mathcal{J}_1$.
	\end{proof}
	
	\begin{remark}
		The elements of $\Tens{\text{Cact}}(K_\mathbb{N})/\mathcal{J}_1$ and $\displaystyle\Sym{\text{Cact}}(K_\mathbb{N})/\mathcal{J}_2$ can be seen as cacti where the node labels have been forgotten since the node labels are defined modulo the action of $\mathcal{I}_\mathbb{N}$. These Hopf algebras can thus legitimately be called the tensor and symmetric Hopf algebras of unlabeled cacti, respectively. 
	\end{remark}
	
	

	By direct calculation,
	\begin{proposition}
		The degree map $\deg$ makes $\Tens{\emph{\text{Cact}}(K_\mathbb{N})}/\mathcal{J}_1$ and $\Sym{\emph{\text{Cact}}(K_\mathbb{N})}/\mathcal{J}_2$ into graded Hopf algebras.
	\end{proposition}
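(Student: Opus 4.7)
The plan is to reduce this to two observations: that $\Tens{\text{Cact}(K_\mathbb{N})}$ and $\Sym{\text{Cact}(K_\mathbb{N})}$ are already graded Hopf algebras for $\deg$, and that the biideals $\mathcal{J}_1$ and $\mathcal{J}_2$ are homogeneous with respect to this grading. Together these imply that the quotient inherits a graduation compatible with product, coproduct, unit, counit and antipode.

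First I would note that by Proposition~\ref{SubAlgebras} applied to the complete graph $K_\mathbb{N}$, the spaces $\Tens{\text{Cact}(K_\mathbb{N})}$ and $\Sym{\text{Cact}(K_\mathbb{N})}$ are Hopf subalgebras of $\Tens{\WGamma{W}{K_\mathbb{N}}}$ and $\Sym{\WGamma{W}{K_\mathbb{N}}}$ respectively, and by Theorem~\ref{HopfT} these ambient Hopf algebras are graded connected with graduation given by $\deg$. Since the degree extends to forests additively via $\deg(\omega_1|\dots|\omega_n)=\sum_i\deg(\omega_i)$ (identically for the symmetric product), restriction to cacti still yields a graded Hopf algebra.

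The crucial point is then that each generator $\omega_1|\dots|\omega_n-f_1(\omega_1)|\dots|f_n(\omega_n)$ of $\mathcal{J}_1$ is homogeneous: because every $f_i\in\mathcal{I}_\mathbb{N}$ is injective, applying $f_i$ vertex-by-vertex to a walk $\omega_i=w_0\dots w_{\ell_i}$ preserves its length, so $\deg(f_i(\omega_i))=\deg(\omega_i)$ and both terms in the generator have degree $\sum_i\deg(\omega_i)$. The same observation applies verbatim to $\mathcal{J}_2$. Thus $\mathcal{J}_1$ and $\mathcal{J}_2$ are graded subspaces, and since we already know (from the previous proposition) that they are Hopf biideals, the quotients $\Tens{\text{Cact}(K_\mathbb{N})}/\mathcal{J}_1$ and $\Sym{\text{Cact}(K_\mathbb{N})}/\mathcal{J}_2$ inherit the degree map and become graded connected Hopf algebras.

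There is no real obstacle here, which is why the authors invoke ``direct calculation''; the only thing to check carefully is the homogeneity of the generators of $\mathcal{J}_1$ and $\mathcal{J}_2$, and this is immediate from the fact that an injective relabeling preserves the length of a walk. Connectedness in degree zero passes to the quotient because the degree-zero part consists only of the unit and its images under relabelings, which are identified.
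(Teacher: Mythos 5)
Your argument is correct and is precisely the ``direct calculation'' the paper leaves implicit: the generators of $\mathcal{J}_1$ and $\mathcal{J}_2$ are homogeneous because an injective relabeling preserves the length of each walk, so the Hopf biideals are graded subspaces and the quotients inherit the graduation of $\Tens{\text{Cact}(K_\mathbb{N})}$ and $\Sym{\text{Cact}(K_\mathbb{N})}$. Only your closing aside on connectedness is slightly off --- the classes of the length-$0$ trivial walks survive in degree zero alongside the unit $\mathbf{1}$ rather than being identified with it --- but the proposition asserts only gradedness, so this does not affect the proof.
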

	
	\begin{theorem}\label{WalkToCactus}
		Let $\G$ be a digraph. Let $\Phi_1:\Tens{\WGamma{W}{\G}}\rightarrow \displaystyle\Tens{\emph{\text{Cact}}(K_\mathbb{N})}/\mathcal{J}_1$ and $\Phi_2:\Sym{\WGamma{W}{\G}}\rightarrow  \Tens{\emph{\text{Cact}}(K_\mathbb{N})}/\mathcal{J}_2$  be the two algebra morphisms such that $\Phi_i(\omega)$ is the unlabeled cactus obtained from $C(\omega)$ by forgetting all its node labels. 
		Then $\Phi_1$ and $\Phi_2$ are Hopf algebra morphisms.
	\end{theorem}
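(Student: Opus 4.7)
The plan is to verify that $\Phi_1$ and $\Phi_2$ are bialgebra morphisms; compatibility with the antipode then follows automatically since source and target are both graded connected bialgebras. Since $\Phi_1$ and $\Phi_2$ are defined as algebra morphisms, and $\Delta_{\text{H}}$ is itself an algebra morphism on both sides, the coproduct compatibility $\Delta_{\text{H}}\circ\Phi_i=(\Phi_i\otimes\Phi_i)\circ\Delta_{\text{H}}$ need only be checked on generators, i.e.\ on individual walks $\omega\in\mathcal{W}(\G)$. Counit compatibility is immediate as both counits vanish on positive-degree elements.

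The heart of the proof is the following technical lemma, which I would establish by induction on the length of $\omega$: for every walk $\omega\in\mathcal{W}(\G)$ and every extended admissible cut $\omega^{c}:=\omega^{k_1,k_1';\dots;k_n,k_n'}\in E\text{AdC}(\omega)$, there exist injective maps $f,g\in\mathcal{I}_{\mathbb{N}}$ such that
\[
f\bigl(C(\omega^{c})\bigr)=C(\omega)^{c}\qquad\text{and}\qquad g\bigl(C(\omega_{c})\bigr)=C(\omega)_{c}.
\]
The fact that the same indices $k_1<k_1'<\dots<k_n<k_n'$ define an extended admissible cut of $C(\omega)$ is already guaranteed by Eq.~(\ref{finjectAdC2}) applied iteratively. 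The existence of $f$ and $g$ reflects that the labels produced by $C$ are assigned according to the chronological order in which distinct vertices first enter the loop-erased walk; consequently, the labels of $C(\omega)^{c}$ (respectively $C(\omega)_{c}$) form an order-preserving subset of $\mathbb{N}$, possibly with gaps, while $C(\omega^{c})$ (respectively $C(\omega_{c})$) assigns labels consecutively. The injections $f$ and $g$ are then uniquely determined by order preservation of the first occurrence of each distinct label.

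Granted this lemma, the verification is direct:
\[
(\Phi_1\otimes\Phi_1)\circ\Delta_{\text{H}}(\omega)=\mathbf{1}\otimes\Phi_1(\omega)+\Phi_1(\omega)\otimes\mathbf{1}+\sum_{c\in E\text{AdC}(\omega)}[C(\omega_{c})]\otimes[C(\omega^{c})],
\]
which by the lemma and the definition of $\mathcal{J}_1$ equals
\[
\mathbf{1}\otimes\Phi_1(\omega)+\Phi_1(\omega)\otimes\mathbf{1}+\sum_{c\in E\text{AdC}(C(\omega))}[C(\omega)_{c}]\otimes[C(\omega)^{c}]=\Delta_{\text{H}}\bigl(\Phi_1(\omega)\bigr).
\]
The argument for $\Phi_2$ is identical, replacing $\bigcdot$ by $\Box$ and $\mathcal{J}_1$ by $\mathcal{J}_2$.

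The main obstacle will be the careful formalisation of the lemma. The underlying intuition is transparent, but the bookkeeping must track, position by position in $\omega$, whether $C$ assigns a fresh label (because the corresponding vertex is new in $\text{LEW}_k(\omega)$) or propagates an existing one. Cutting out an admissible section may turn formerly repeated vertices into first-time visits in the remainder, or vice versa, so one must verify in each of the cases opened up by Proposition~\ref{LemmaCut} (disjoint cuts versus nested cuts) that the comparison between the fresh labellings assigned by $C(\omega^{c})$ or $C(\omega_{c})$ and the subsets of labels inherited from $C(\omega)$ is always mediated by a well-defined injection of $\mathbb{N}$ into itself.
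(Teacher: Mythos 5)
Your proposal takes essentially the same route as the paper, whose own proof is a one-line appeal to Eq.~(\ref{finjectAdC}) and the definitions: the key lemma you isolate, namely that $C(\omega^{c})$ and $C(\omega_{c})$ agree with $C(\omega)^{c}$ and $C(\omega)_{c}$ up to the injective relabelings quotiented out by $\mathcal{J}_1$ and $\mathcal{J}_2$, is exactly the fact the paper leaves implicit, and your order-preservation argument for the fresh labels assigned by $C$ is the right justification, so if anything your version is the more complete one. Two minor points: the admissibility statement you need is Eq.~(\ref{finjectAdC}) (the version for $C$) rather than Eq.~(\ref{finjectAdC2}) (the version for an injection $f$), and for a multi-component extended cut the comparison requires one injection per tensor factor rather than a single $f$, which is precisely what the generators of $\mathcal{J}_1$ and $\mathcal{J}_2$ permit.
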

	
	\begin{proof}
		By definition, the cardinalities of  $V(\omega)$ and $V(C(\omega))$ are equal,  $C(\omega)$ is a cactus and Eq.~(\ref{finjectAdC}) holds. By the definition of $\Delta_{\text{H}}$ and the formulas of the antipode given in Theorem~\ref{TheoremAntipode} and Corollary~\ref{AntipodeSym}, we prove the theorem. 	
	\end{proof}

	\section{Acknowledgements}
	C. Mammez and P.-L. Giscard are supported by the ANR \textsc{Alcohol} project ANR-19-CE40-0006. In addition, C. Mammez aknowledges support from Labex CEMPI, ANR-11-LABX-0007-01. P.-L. Giscard also received funding from ANR \textsc{Magica} project ANR-20-CE29-0007.
 \bibliographystyle{siam}

\end{document}